\documentclass[psamsfonts,11pt,reqno]{amsart}
\usepackage[english]{babel}
\usepackage{amsmath,amsfonts,amssymb,amsthm,microtype,graphicx,mathtools,pgfplots,enumitem}
\usepackage[mathscr]{eucal}
\usepackage{footnote}
\usepackage{sepfootnotes}
\usepackage{blindtext}
\usepackage[hyphens]{url}
\usepackage{float}

\DeclarePairedDelimiter\norm{\lVert}{\rVert}
\usepackage{lmodern}
\theoremstyle{definition}
\newtheorem{theorem}{Theorem}[section]

\newtheorem{lemma}[theorem]{Lemma}

\newtheorem{proposition}[theorem]{Proposition}
\newtheorem{remark}[theorem]{Remark}
\numberwithin{equation}{section}
\usepackage{comment}
\usepackage{hyperref}

\def\eps{{\varepsilon}}
\def\N{\mathbb{N}}

\def\R{\mathbb{R}}

\def\EE{\mathcal{E}}

\def\HH{\mathcal{H}}

\def\KK{\mathcal{K}}

\def\Ws{W_{1+s}}
\def\Wm{W_{2m}}
\def\W{\widetilde W}
\def\y{\lvert y\rvert^a}
\def\aB{_{L^2( B_1,a)}}
\def\ab{_{L^2(\partial B_1,a)}}

\def\vf{\varphi}

\newcommand{\be}{\begin{equation}}
	\newcommand{\ee}{\end{equation}}
\newcommand{\bea}{\begin{equation*}\begin{aligned}}
		\newcommand{\eea}{\end{aligned}\end{equation*}}

\begin{document}
	\title[Epiperimetric inequalities in the fractional obstacle problem]{Epiperimetric inequalities in the obstacle problem for the fractional Laplacian}
	\author{MATTEO CARDUCCI}
	\address{Scuola Normale Superiore, Piazza dei Cavalieri 7, 56126, Pisa, Italy}
	\email{\href{mailto:matteo.carducci@sns.it}{matteo.carducci@sns.it}}
	\keywords{Free boundary regularity, obstacle problem, fractional Laplacian, epiperimetric inequality}
	
	\subjclass[2010]{35R35}
	\begin{abstract} Using epiperimetric inequalities approach, we study the obstacle problem $\min\{(-\Delta)^su,u-\varphi\}=0,$ for the fractional Laplacian $(-\Delta)^s$ with obstacle $\vf\in C^{k,\gamma}(\R^n)$, $k\ge2$ and $\gamma\in(0,1)$. 
		
		We prove an epiperimetric inequality for the Weiss' energy $\Ws$ and a logarithmic epiperimetric inequality for the Weiss' energy $\Wm$. Moreover, we also prove two epiperimetric inequalities for negative energies $\Ws$ and $\Wm$.
		
		By these epiperimetric inequalities, we deduce a frequency gap and a characterization of the blow-ups for the frequencies $\lambda=1+s$ and $\lambda=2m$. Finally, we give an alternative proof of the regularity of the points on the free boundary with frequency $1+s$ and we describe the structure of the points on the free boundary with frequency $2m$, with $m\in\N$ and $2m\le k.$
	\end{abstract}
	\maketitle
	\section{Introduction}
	\subsection{Obstacle problem for the fractional Laplacian} 
	Let $\varphi\in C^{k,\gamma}(\R^n)$, $k\ge2$ and $\gamma\in(0,1)$, that decays rapidly at infinity, we consider a solution of the obstacle problem for the fractional Laplacian with obstacle $\varphi$, that is a function $u:\R^n\to\R$ such that 
	$$\min\{(-\Delta)^su,u-\varphi\}=0,$$ i.e.
	\be\begin{cases}\label{fract}
		u(x)\ge\vf(x) & \mbox{in } \R^n\\
		(-\Delta)^s u(x)=0 & \mbox{in } \{u(x)>\varphi(x)\}\\
		(-\Delta)^s u(x)\ge0 & \mbox{in } \R^n\\
	\end{cases}\ee with $s\in(0,1)$. The fractional Laplacian $(-\Delta)^s$ defined as $$ (-\Delta)^su(x) :=c_{n,s}\mbox{P.V.}\int_{\R^n}\frac{u(x)-u(y)}{\lvert x-y\rvert^{n+2s}}\,dy,$$ where $c_{n,s}=2^{2s}s\frac{\Gamma(\frac{n+2s}{2})}{\Gamma(1-s)}\pi^{-\frac n2}$ is a normalization constant. 
	
	The aim of the paper is to established the optimal regularity of the solution and to describe the structure and the regularity of the free boundary $$\Gamma(u):=\partial\Lambda(u)$$ where $$\Lambda(u):=\{x\in\R^n:u(x)=\varphi(x)\}$$ is the contact set.
	
	\subsection{The extension operator $L_a$} To study this problem, we will use the Caffarelli-Silvestre extension of $u$. As in \cite{cs07}, we consider the function $\widetilde u:\R^n\times\R\to\R$ satisfying
	$$\begin{cases}
		L_a \widetilde u(X)=0& \mbox{in } \R^{n+1}_+\\
		\widetilde u(x,y)=\widetilde u(x,-y)& \mbox{in } \R^{n+1}\\
		\widetilde u(x,0)=u(x) & \mbox{in } \R^n,\\
	\end{cases}$$ where $X=(x,y)\in\R^{n+1}_+:=\R^n\times (0,+\infty)$ and $$L_a \widetilde u(X)=\lvert y\rvert^{-a}\mbox{div}_{x,y}(\y\nabla_{x,y} \widetilde u)=\Delta_x\widetilde u+\frac{a}{y}\partial_y \widetilde u+\Delta_y \widetilde u.$$ According to \cite{cs07} (see also \cite{ros}), if we choose $a:=1-2s\in(-1,1)$, we get \bea(-\Delta)^su(x)=-d_s\lim_{y\to0^+}(\y\partial_y \widetilde u(x,y)),\eea with $d_s>0$, i.e. $(-\Delta)^s$ is a Dirichlet-to-Neumann map for $L_a$.
	Now, since $$\mbox{div}_{x,y}(\y\nabla_{x,y} \widetilde u)=2\lim_{y\to0^+}(\y\partial_y \widetilde u(x,y))\HH^n|_{\{y=0\}}$$ in distributional sense (see Proposition \ref{prop1}), we get that the problem \eqref{fract} is equivalent to \be\begin{cases}\label{fract2}
		\widetilde u(x,0)\ge\vf(x) & \mbox{in } \R^n\\
		\widetilde u(x,y)=\widetilde u(x,-y) &\mbox{in } \R^{n+1}\\
		-L_a \widetilde u(x,y)=0 & \mbox{in } \R^{n+1}\setminus \{u(x,0)=\varphi(x)\}\\
		-L_a \widetilde u(x,y)\ge0 & \mbox{in } \R^{n+1},\\
	\end{cases}\ee where $\varphi\in C^{k,\gamma}(\R^n)$, $k\ge2$ and $\gamma\in(0,1)$.
	
	In particular, when $s=\frac12$, i.e. $a=0$ and $L_a=\Delta$, the problem \eqref{fract2} is the thin obstacle problem (also know as Signorini problem).

	Localizing the problem in $B_1\subset\R^{n+1}$, the solution of \eqref{fract2} can be obtained by minimizing the functional \be\label{e}\EE (v)=\int_{B_1}\lvert \nabla v\rvert^2\y\,dX\ee among the admissible functions $$\KK_c^\vf:=\{v\in H^1(B_1,a): v\ge\varphi \mbox{ on } B'_1,\ v=c \mbox{ on } \partial B_1,\ v(x,y)=v(x,-y) \},$$ where $B_1':=B_1\cap\{y=0\}$ and $c$ is the trace on $\partial B_1$ of $u$.
	
	Here we denote by $H^1(\Omega,a):=H^1(\Omega,\y)$ the weighted Sobolev space. Similarly, $L^2(\Omega,a):=L^2(\Omega,\y)$ is the weighted Lebesgue space.
	
	In the following, with a slight abuse of notation, we denote by $u$ the $L_a-$extension in $\R^{n+1}$ of $u$, and we suppose that $u\in H^1_{loc}(\R^{n+1},a)$.
	
	Moreover, with a slight abuse of notation, we also denote by $x_0\in\R^n$ the point $(x_0,0)\in\R^{n}\times \{0\}.$
	
	\subsection{Obstacle $\vf\equiv0$} 
	
	We will say that $u$ is a solution with 0 obstacle, if solves \eqref{fract2} with $\varphi\equiv0$, i.e. if $u$ satisfies
	\be\begin{cases}\label{fract4}
		\widetilde u(x,0)\ge0 & \mbox{in } \R^n\\
		\widetilde u(x,y)=u(x,-y) &\mbox{in } \R^{n+1}\\
		-L_a \widetilde u(x,y)=0 & \mbox{in } \R^{n+1}\setminus \{u(x,0)=0\}\\
		-L_a \widetilde u(x,y)\ge0 & \mbox{in } \R^{n+1}.
	\end{cases}\ee	
	Moreover, we denote by \bea \KK_c:=\{v\in H^1(B_1,a): v\ge0 \mbox{ on } B'_1,\ v=c \mbox{ on } \partial B_1,\ v(x,y)=v(x,-y) \}\eea the set of admissible function with $\vf\equiv0$, then $u$ is a minimum of the functional \eqref{e} in the class $\mathcal{K}_c$, with $c=u|_{\partial B_1}$.
	
	\subsection{Reduction to 0 obstacle} Let $u$ be a solution of \eqref{fract2} with obstacle $\varphi\in C^{k,\gamma}(\R^n)$, $ k\ge2$ and $\gamma\in(0,1)$. Let $q_k^{x_0}(x)$ be the $k$-th Taylor polynomial of $\vf$ at $x_0\in\Gamma(u)$ and $\widetilde q_k^{x_0}(x,y)$ be a polynomial of degree $k$ and the $L_a-$harmonic extension of $q_k^{x_0}(x)$ (see Lemma \ref{ex}). Then $\widetilde q_k^{x_0}(x,y)$ solves the following problem
	$$\begin{cases}
		L_a \widetilde  q_k^{x_0}(x,y)=0& \mbox{in } \R^{n+1}\\
		\widetilde q_k^{x_0}(x,y)=\widetilde  q_k^{x_0}(x,-y)& \mbox{in } \R^{n+1}\\
		\widetilde q_k^{x_0}(x,0)=q_k^{x_0}(x) & \mbox{in } \R^n\\
		\lvert\vf(x)-q_k^{x_0}(x)\rvert\le C\lvert x-x_0\rvert^{k+\gamma} & \mbox{in } \R^n.
	\end{cases}$$ We can define $$ u^{x_0}(x,y)=u(x,y)-\widetilde q_k^{x_0}(x,y)-(\vf(x)-q_k^{x_0}(x)),$$ then $u^{x_0}$ solves the following problem \be\begin{cases}\label{fract5}
		u^{x_0}(x,0)\ge0 & \mbox{in } \R^n\\
		u^{x_0}(x,y)=u(x,-y) &\mbox{in } \R^{n+1}\\
		-L_a  u^{x_0}(x,y)=h(x,y) & \mbox{in } \R^{n+1}\setminus \{u(x,0)=0\}\\
		-L_a  u^{x_0}(x,y)\ge h(x,y) & \mbox{in } \R^{n+1},
	\end{cases}\ee where $h(x,y)=h^{x_0}(x,y):= \Delta_x(\vf(x)-q_k^{x_0}(x))$.

	Notice that starting from an obstacle problem with obstacle $\vf\not\equiv0$, we have reduced the problem to the case  $\vf\equiv0$, where the right-hand side in the third and fourth line of \eqref{fract5} is not 0. However, the function $h=h^{x_0}$ is very small near $x_0$. Precisely, by the $C^{k,\gamma}-$regularity of $\vf$,  \be\label{estimate}\lvert h(x,y)\rvert\le C\lvert x-x_0\rvert^{k+\gamma-2}.\ee 
	
	Notice that the function $u^{x_0}$ inherits the local behavior of $u$. In what follows we will study the properties of the functions $u^{x_0}$ at all the points $x_0$ on the boundary $\Gamma(u)$ of the contact set $\Lambda(u)$.

	\subsection{State of art} In this section we give a brief overview on the state of the art of the obstacle for the fractional Laplacian; for more details we refer to \cite{dasa18} for $ s\in(0,1)$, and to \cite{psu12}, \cite{survey} for the case $s=\frac12$.
	
	The obstacle problem for the fractional Laplacian was studied by Silvestre in \cite{sil07}, where it was established the almost-optimal regularity $C^{1,\alpha}$ of the solution for all $\alpha\in(0,s)$, in the case $\vf\in C^{2,1}(\R^n)$. Moreover, in the same paper, it is proved that if $\vf\in C^{1,\beta}(\R^n)$, then $u\in C^{1,\alpha}(\R^n)$ for all $\alpha\in(0,\min\{s,\beta\})$.

	Later, in \cite{css08}, Caffarelli, Salsa and Silvestre proved the optimal regularity $C^{1,s}$ of the solution for $\varphi\in C^{2,1}(\R^n)$, thus generalizing the result previously obtained by Athanasopoulos and Caffarelli in \cite{ac04} in the case $s=\frac12$ and $\vf\equiv0$. They use a modification of the following "pure" Almgren's frequency function \be\label{alm} N^{x_0}(r,u):=\frac{r\int_{ B_r(x_0)} \lvert \nabla u \rvert^2\y\,dX}{\int_{\partial B_r(x_0)} u ^2\y\,d\mathcal{H}^n},\ee which is monotone in $r$ provided that $\vf\equiv0$. Precisely, setting 
	\be \label{H}H^{x_0}(r,v)=\int_{\partial B_r(x_0)} v ^2\y\,d\mathcal{H}^n,\ee 
	they introduce the following generalized Almgren's frequency function: \bea \Phi^{x_0}(r,v)=(r+Cr^{1+p})\frac{d}{dr}\log\max\{H^{x_0}(r,v),r^{n+a+2(k+\gamma-p)}\}\eea with $k=2$, $\gamma=1$ and $p=1$. This function is monotone in $r$ also when $\varphi\not\equiv0$, with $\varphi\in C^{k,\gamma}(\R^n)$. Here we denoted by $v=u^{x_0}$, with $x_0\in\Gamma(u)$, a solution to a similar problem to \eqref{fract5}.
	
	As a consequence of the monotonicity of $\Phi^{x_0}$, they obtain that if $x_0\in \Gamma(u)$ is a free boundary point such that
	$$\Phi^{x_0}(0^+,v):=\lim_{r\to0^+}\Phi^{x_0}(r,v)=n+a+2\lambda,$$
	for some $\lambda\in(0,2)$, then the rescalings 
	$$v_{r,x_0}(x,y):=\frac{v(x_0+rx,ry)}{\frac{1}{r^{n+a}}\int_{\partial B_r(x_0)} u ^2\y\,d\mathcal{H}^n},$$ 
	converge, up to a subsequence, to a function $v_{0,x_0}$ which is a $\lambda-$homogeneous solution of \eqref{fract4} (with 0 obstacle) (see Lemma 6.1 and Lemma 6.2 in \cite{css08}).
	
	Thus, the following set of free boundary points is well-defined: 
	$$\Gamma_\lambda(u):= \{x_0\in\Gamma(u): v_{0,x_0} \mbox{ is } \lambda-\mbox{homogeneous}\}$$ for every $\lambda\in(0,2)$. Moreover, it was shown in \cite{css08} that $\Gamma_\lambda(u)=\emptyset$ for all $\lambda\in(0,2)\setminus\{1+s\}$. 
	
	We will call $\Gamma_{1+s}(u)$ the set of regular points and we will denote it by $\mbox{Reg}(u)$ since it is known to be locally a $C^{1,\alpha}$ submanifold of dimension $n-1$ (see \cite{css08}). This is a generalization of the result of Athanasopoulos, Caffarelli and Salsa obtained in \cite{acs08} in the case $s=\frac12$ and $\vf\equiv0$. \medskip
	
	In the case $\vf\equiv0$, we can consider the "pure" Almgren's frequency function as in \eqref{alm}, and the free boundary can be decomposed as \bea\Gamma_\lambda(u)&:=\{x_0\in\Gamma(u): u_{0,x_0} \mbox{ is } \lambda-\mbox{homogeneous}\}\\&=\{x_0\in\Gamma(u): N^{x_0}(0^+,u)=\lambda\}\\&=\{x_0\in\Gamma(u): \Phi^{x_0}(0^+,u)=n+a+2\lambda\}.\eea
	Therefore
	$$\Gamma(u)=\mbox{Reg}(u)\cup\mbox{Sing}(u)\cup\mbox{Other}(u),$$ where $\mbox{Reg}(u)=\Gamma_{1+s}(u)$ are the regular points, $\mbox{Sing}(u)=\bigcup_{m\in\N}\Gamma_{2m}$ are the so-called singular points, and $\mbox{Other(u)}$ are all the remaining points in $\Gamma(u)$.
	
	For the singular points, in the case $s=\frac12$, in \cite{gp09}, Garofalo and Petrosyan proved that $\mbox{Sing}(u)$ is contained in the union of at most countably many submanifolds of class $C^{1}$.
	In \cite{gr17}, using the monotonicity of the generalized Almgren's frequency for $k\ge2$, $\gamma\in(0,1)$ and $p$ small enough,  Garofalo and Ros-Oton extended the structure of singular set to any $s\in(0,1)$. Indeed in the case $\vf\in C^{k,\gamma}(\R^n)$, $k\ge2$ and $\gamma\in(0,1)$, they proved that $$\bigcup_{\{m\in\N:\ 2m\le k\}}\Gamma_{2m}(u)$$ is contained in the union of countably many submanifolds $C^{1}$, where the bound $2m\le k$ is needed in order to assure the existence of blow-ups. Thus, they improved the result previously obtained in \cite{gp09} (in the case $s=\frac12$), where more regularity of the obstacle $\vf$ was required.	
	
	Moreover, Focardi and Spadaro described the entire free boundary, up to sets of null $\HH^{n-1}$ measure, in the case $\vf\equiv 0$ in \cite{fs18} and in the case $\vf\not\equiv 0$ in \cite{fs20}.
	\medskip
	
	An alternative proof of the regularity and structure of the free boundary uses epiperimetric inequalities approach for the Weiss' energy 
	\be\label{W}W^{x_0}_\lambda(r,u):=\frac{1}{r^{n+a+2\lambda-1}}\int_{B_r(x_0)}\lvert\nabla u\rvert^2\y\,dX-\frac{\lambda}{r^{n+a+2\lambda}}\int_{\partial B_r(x_0)}u^2\y\,d\mathcal{H}^n .\ee 
	
	In the case $s=\frac12$ and $\vf\equiv0$, Garofalo, Petrosyan and Smit Vega Garcia in \cite{gps16} and Focardi and Spadaro in \cite{fs16} proved an epiperimetric inequality for $\Ws$ to deduce the regularity $C^{1,\alpha}$ of the regular points $\mbox{Reg}(u)$. In the case $s\in(\frac12,1)$ and $\vf\not\equiv 0$, using epiperimetric inequalities approach, the same regularity for $\Gamma_{1+s}(u)$  was established by Garofalo, Petrosyan, Pop and Smit Vega Garcia in \cite{gpps17} (see also \cite{geraci} for $\vf\equiv 0$ and $s\in(0,1)$). The regularity of the free boundary $\Gamma_{1+s}(u)$ in the case of more general degenerate elliptic operators for variable coefficients was established recently by Banerjee, Buseghin and Garofalo in \cite{bbg22}, using again an epiperimetric inequality.
	
	Following epiperimetric inequalities approach, Colombo, Spolaor and Velichkov in \cite{csv17} give an alternative proof of the structure of singular set $\mbox{Sing}(u)$ in the case $s=\frac12$ and $\vf\equiv0$. They improved the regularity of the manifolds that contains the singular set up to $C^{1,\log}$, due to the logarithmic epiperimetric inequality for $\Wm$.
	
	\subsection{Main results.}The goal of this paper is to generalize the epiperimetric inequalities that we already know for the thin obstacle problem $s=\frac12$, to any $s\in(0,1)$. With this generalization, we can deduce the previous results of regularity and structure of the free boundary, even for non-zero obstacles $\vf\not\equiv 0$, with $\vf\in C^{k,\gamma}(\R^n)$, $k\ge2$ and $\gamma\in(0,1)$.
	In particular, we prove an epiperimetric inequality for $\Ws$ and a logarithmic epiperimetric inequality for $\Wm$, for each $s\in(0,1)$.

	Before we state our main results (Theorem \ref{thm1} and Theorem \ref{thm2} below), we recall that
	\be\label{g}\KK_c:=\{v\in H^1(B_1,a): v\ge0 \mbox{ on } B'_1,\ v=c \mbox{ on } \partial B_1,\ v(x,y)=v(x,-y) \}\ee is the set of admissible function, for each $c\in H^1(\partial B_1,a)$. We will also denote by $W_\lambda(u)$ the Weiss' energy $W_\lambda^{x_0}(r,u)$, when $x_0=0$ and $r=1$. 
	
	\begin{theorem}[Epiperimetric inequality for $\Ws$]\label{thm1} Let $\mathcal{K}_c$ defined in \eqref{g} and $z=r^{1+s}c(\theta)\in\mathcal{K}_c$ be the $(1+s)-$homogeneous extension in $\mathbb{R}^{n+1}$ of a function $c\in H^1(\partial B_1,a)$. Therefore there is $\zeta\in\mathcal{K}_c$ such that $$\Ws(\zeta)\le (1-\kappa)\Ws(z),$$ with $\kappa=\frac{1+a}{2n+a+5}.$ 
	\end{theorem}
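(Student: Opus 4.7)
The strategy is to build an explicit competitor $\zeta\in\KK_c$ by taking the $L_a$-harmonic extension of the boundary datum $c$ and controlling the energy difference via a spherical harmonic expansion. First I would dispose of the easy case $\Ws(z)\le 0$: here $\zeta:=z\in\KK_c$ works since $(1-\kappa)\Ws(z)\ge\Ws(z)$ when $\Ws(z)\le 0$ and $\kappa\in(0,1)$. So assume $\Ws(z)>0$. Next, switching to polar coordinates $X=r\theta$, using the $(1+s)$-homogeneity of $z=r^{1+s}c(\theta)$ and the identity $2s+a=1$ inherited from $a=1-2s$, the radial integral collapses via $\int_0^1 r^{2s+n+a}\,dr=\frac{1}{n+2}$, yielding the closed form
\[
\Ws(z)=\frac{1}{n+2}\int_{\partial B_1}\bigl[|\nabla_\theta c|^2-(1+s)(n+1-s)c^2\bigr]\y\,d\HH^n.
\]

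The natural candidate competitor is the unconstrained $L_a$-harmonic extension $\widetilde c$ of $c$ to $B_1$. Expanding $c=\sum_{k,j}c_{k,j}Y_{k,j}$ in an $L^2(\partial B_1,|y|^a)$-orthonormal basis of $L_a$-spherical harmonics $Y_{k,j}$ of integer degree $k$ (traces to $\partial B_1$ of $L_a$-harmonic polynomials, even in $y$), one gets $\widetilde c(r,\theta)=\sum_{k,j}c_{k,j}r^k Y_{k,j}(\theta)$, $\EE(\widetilde c)=\sum_{k,j}c_{k,j}^2\,k$, and $\int_{\partial B_1}\widetilde c^{\,2}\y\,d\HH^n=\sum_{k,j}c_{k,j}^2$, hence $\Ws(\widetilde c)=\sum_{k,j}c_{k,j}^2(k-1-s)$. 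Using the factorization $k(k+n+a-1)-(1+s)(n+1-s)=(k-1-s)(k+n+1-s)$, a direct algebraic manipulation with $\kappa=\tfrac{1-s}{n+3-s}=\tfrac{1+a}{2n+a+5}$ produces the mode-by-mode identity
\[
(1-\kappa)\Ws(z)-\Ws(\widetilde c)=\frac{1}{n+3-s}\sum_{k,j}c_{k,j}^2(k-1-s)(k-2).
\]
Each summand is nonnegative (the cases $k=0$ and $k=1$ give the product of two negative factors, $k=2$ gives zero, and $k\ge 3$ gives the product of two positive factors), and the sharpness of $\kappa$ is forced by the critical mode $k=2$. This matches the appearance of the integer frequency $2$ in the statement of the singular part of the free boundary.

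The main obstacle is admissibility: the $L_a$-harmonic extension $\widetilde c$ need not lie in $\KK_c$, since it may take negative values on $B_1'=B_1\cap\{y=0\}$ even when $c\ge 0$ on $\partial B_1'$. Replacing $\widetilde c$ by the Signorini minimizer $\zeta^{\star}:=\arg\min_{\KK_c}\EE$ restores admissibility but increases the Dirichlet energy ($\EE(\zeta^{\star})\ge\EE(\widetilde c)$), hence also $\Ws(\zeta^{\star})\ge\Ws(\widetilde c)$, so the mode-by-mode gain could in principle be erased. The heart of the proof is therefore to show that the obstacle penalty $\EE(\zeta^{\star})-\EE(\widetilde c)=\int_{B_1}|\nabla(\zeta^{\star}-\widetilde c)|^2\y\,dX$ is dominated by the slack $\tfrac{1}{n+3-s}\sum c_{k,j}^2(k-1-s)(k-2)$, or, alternatively, to construct a direct admissible competitor such as a convex combination $\zeta:=t\,\widetilde c+(1-t)z$ where $t\in(0,1]$ is tuned small enough that the pointwise cushion of $z$ on $B_1'$ guarantees $\zeta\ge 0$ there while still inheriting enough of the spectral improvement of $\widetilde c$. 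Quantifying this balance between admissibility and spectral gain, using that $z\in\KK_c$ provides a positive baseline on $B_1'$, is the principal technical step and the point at which the explicit value of $\kappa$ is locked in.
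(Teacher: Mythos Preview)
Your spectral computation is correct and locates the sharp constant $\kappa=\frac{1+a}{2n+a+5}$ for the right reason (the mode $k=2$ is critical). The genuine gap is exactly where you say it is, and neither of your proposed repairs closes it. For the convex combination $\zeta=t\widetilde c+(1-t)z$: there is no ``pointwise cushion'' of $z$ on $B_1'$, since $z$ may vanish on a nontrivial portion of $B_1'$ (indeed the interesting traces $c$ vanish on a half-sphere of $\partial B_1'$). Any admissible $t$ must therefore depend on $c$, and as $t\to 0$ the energy $\Ws(\zeta)\to\Ws(z)$, so no universal $\kappa$ can survive. For the Signorini minimizer you already note that the obstacle penalty can in principle eat the whole spectral slack; there is no mechanism in your argument bounding it.

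The paper's resolution is to treat the two problematic low modes $k=0,1$ separately, not by the harmonic extension, but by replacing them with functions tailored to the constraint. One writes $c=Ch_e^s+c_0u_0+\phi$ on $\partial B_1$, where $h_e^s$ is the model $(1+s)$-homogeneous solution (so $\Ws(Ch_e^s)=0$ and $h_e^s\ge 0$ on $B_1'$), $u_0(\theta)=|\theta_{n+1}|^{1+s}$ absorbs the constant mode and \emph{vanishes identically on $B_1'$}, and $\phi$ carries only eigenmodes with $\alpha_k\ge 2$. The competitor is then
\[
\zeta=Cr^{1+s}h_e^s+c_0r^{1+s}u_0+r^{2}\phi,
\]
i.e.\ only the high-mode tail is re-extended, with homogeneity $2$ rather than its own degree. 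Admissibility is now a one-line inequality: on $B_1'$ one has $u_0\equiv 0$, so $\zeta-r^2c=C(r^{1+s}-r^{2})h_e^s\ge 0$, hence $\zeta\ge r^{2}c\ge 0$. The cross terms between $h_e^s$, $u_0$ and $r^\alpha\phi$ produce a factor $\frac{1}{n+\alpha+1-s}$ which, together with your identity \eqref{seconda}-type computation for $W_{1+s}(r^\alpha\phi)$, makes $\Ws(\zeta)-(1-\kappa)\Ws(z)\le 0$ exactly when $\kappa=\kappa^a_{2,1+s}=\frac{1+a}{2n+a+5}$. The missing idea in your attempt is precisely this substitution of the low modes by $h_e^s$ and $|y|^{1+s}$, which decouples the admissibility constraint from the spectral gain.
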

	
	\begin{theorem}[Logarithmic epiperimetric inequality for $\Wm$]\label{thm2} Let $\mathcal{K}_c$ defined in \eqref{g} and $z=r^{2m}c(\theta)\in\mathcal{K}_c$ be the $2m-$homogeneous extension in $\mathbb{R}^{n+1}$ of a function $c\in H^1(\partial B_1,a)$. We also suppose that exists a constant $\Theta>0$ such that \be\label{cond}\norm c_{L^2(\partial B_1,a)}^2\le\Theta\mbox{ and }\lvert W_{2m}(z)\rvert\le\Theta,\ee then there is $\zeta\in\mathcal{K}_c$ such that$$W_{2m}(\zeta)\le W_{2m}(z)(1-\varepsilon\Theta^{-\beta}\lvert W_{2m}(z)\rvert^\beta),$$ with $\beta=\frac{n-1}{n+1}$ and $\eps=\eps(n,m,a)>0$ small enough.
	\end{theorem}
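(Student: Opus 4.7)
The strategy adapts the logarithmic epiperimetric technique of Colombo--Spolaor--Velichkov \cite{csv17}, developed for the Signorini problem ($a=0$), to the degenerate operator $L_a$. I expand $c=\sum_k c_k\phi_k$ in an orthonormal basis of $L^2(\partial B_1,\y)$ of eigenfunctions of the weighted spherical Laplacian $-\mathrm{div}_S(\y\nabla_S\,\cdot\,)$, with eigenvalues $0=\mu_0\le\mu_1\le\ldots$. Since $r^\lambda\phi_k$ is $L_a$-harmonic if and only if $\mu_k=\lambda(\lambda+n+a-1)$, the critical eigenvalue at $\lambda=2m$ is $\mu_*:=2m(2m+n+a-1)$, and a direct computation in polar coordinates gives
\be
(n+a+4m-1)\,W_{2m}(z)=\sum_k(\mu_k-\mu_*)\,c_k^2.
\ee
Since $\zeta=z$ already satisfies the claim when $W_{2m}(z)\le0$, I focus on $W_{2m}(z)>0$. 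The kernel $\{\mu_k=\mu_*\}$ is spanned by traces of $2m$-homogeneous $L_a$-harmonic polynomials; within it I isolate the convex cone $\freq_{2m}^+$ of such polynomials non-negative on $\{y=0\}$, i.e.\ the candidate singular blow-up profiles.

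Next, I set $h:=\pi_{\mathrm{span}(\freq_{2m}^+)}\,c$ in $L^2(\partial B_1,\y)$, so that $H(X):=r^{2m}h(\theta)$ is a globally admissible $L_a$-harmonic polynomial; I let $\alpha_k:=\tfrac{1-n-a+\sqrt{(n+a-1)^2+4\mu_k}}{2}$ denote the positive radial exponent for which $r^{\alpha_k}\phi_k$ is $L_a$-harmonic, noting $\alpha_k=2m$ iff $\mu_k=\mu_*$. I then define the unconstrained harmonic Fourier competitor
\be
\zeta_0(X):=H(X)+\sum_{\phi_k\perp\,\mathrm{span}(\freq_{2m}^+)}c_k\,r^{\alpha_k}\phi_k(\theta),
\ee
which agrees with $c$ on $\partial B_1$. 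Orthogonality of the $\phi_k$ in $L^2(\partial B_1,\y)$ and in the Dirichlet form, together with the per-mode identity $W_{2m}(r^{\alpha_k}\phi_k)=\alpha_k-2m$, yields
\be
W_{2m}(z)-W_{2m}(\zeta_0)=\sum_{\mu_k\ne\mu_*}\Big(\tfrac{\mu_k-\mu_*}{n+a+4m-1}-(\alpha_k-2m)\Big)\,c_k^2\ge0,
\ee
since the function $\mu\mapsto\tfrac{\mu-\mu_*}{n+a+4m-1}-(\alpha_+(\mu)-2m)$ has a double zero at $\mu_*$ and is strictly convex. Combining this with Cauchy--Schwarz and the spectral gap $\inf_{\mu_k\neq\mu_*}|\mu_k-\mu_*|>0$ upgrades the estimate to the quadratic bound $W_{2m}(z)-W_{2m}(\zeta_0)\ge c_0\Theta^{-1}W_{2m}(z)^2$, with $c_0=c_0(n,m,a)>0$.

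The main obstacle, and the source of the exponent $\beta=\tfrac{n-1}{n+1}$, is the possible non-admissibility of $\zeta_0$: on $B_1'$ its trace can become negative in a neighbourhood of $\{H=0\}\cap B_1'$. I would construct the actual competitor as $\zeta:=\zeta_0+\psi$, where $\psi\ge0$ is a non-negative $L_a$-harmonic corrector supported on a tubular $\delta$-neighbourhood of $\{H=0\}\cap B_1'$ (for instance, obtained via the obstacle-problem projection of $\zeta_0$ onto $\KK_c$) chosen so that $\zeta\ge0$ on $B_1'$. Since $\{H=0\}\cap B_1'$ is a semialgebraic variety of Hausdorff dimension at most $n-1$ and $H$ vanishes at least quadratically near its smooth points, a Sobolev trace/capacity estimate in the spirit of \cite{csv17} bounds the cost $W_{2m}(\zeta)-W_{2m}(\zeta_0)$ as a sublinear function of the loss. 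Optimizing the cut-off scale $\delta$ to balance the quadratic gain against this correction cost produces a net improvement of the form $W_{2m}(z)-W_{2m}(\zeta)\ge\eps\Theta^{-\beta}W_{2m}(z)^{1+\beta}$ with $\beta=(n-1)/(n+1)$, which rearranges to the announced inequality. The hypothesis \eqref{cond} makes the balancing uniform and $\eps=\eps(n,m,a)>0$ absorbs the dimensional constants and the spectral gap.
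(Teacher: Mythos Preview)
Your spectral setup and the gain identity are fine: indeed $W_{2m}(z)-W_{2m}(\zeta_0)=\frac{1}{n+a+4m-1}\sum_k(\alpha_k-2m)^2c_k^2$, and from this one actually gets a \emph{linear} bound $W_{2m}(\zeta_0)\le(1-c)\,W_{2m}(z)$ with $c=c(n,m,a)>0$, stronger than the quadratic one you state. But this is precisely the point: an unconstrained epiperimetric inequality with fixed constant is trivially available (the harmonic extension always wins), and the entire content of the theorem is the \emph{cost of restoring admissibility}. That step is where the logarithm enters, and your treatment of it is a placeholder rather than an argument.

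Two concrete problems. First, $\mathrm{span}(\freq_{2m}^+)$ is the whole eigenspace at $\mu_*$: for any $2m$-homogeneous $L_a$-harmonic $p$ one writes $p=(p+Mh_{2m})-Mh_{2m}$ with both summands in $\freq_{2m}^+$, where $h_{2m}$ is the $2m$-homogeneous $L_a$-harmonic polynomial with $h_{2m}\equiv1$ on $\partial B_1'$. Hence $H=r^{2m}h$ is just the $\mu_*$-component of $z$ and is \emph{not} non-negative on $\{y=0\}$ in general; your claim that $H$ is ``globally admissible'' is false. Second, the negativity of $\zeta_0$ on $B_1'$ is not localized near $\{H=0\}\cap B_1'$: the difference $\zeta_0-z=\sum_{\alpha_k\ne2m}c_k(r^{\alpha_k}-r^{2m})\phi_k$ is largest near the origin and its sign on $B_1'$ is arbitrary, so a corrector supported in a $\delta$-tube around a codimension-one set cannot repair it. Invoking ``the obstacle-problem projection of $\zeta_0$ onto $\KK_c$'' is circular: that projection is the minimizer of $W_{2m}$ in $\KK_c$, and bounding its energy against $W_{2m}(z)$ is exactly what you are asked to prove. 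No computation in your sketch produces $\beta=(n-1)/(n+1)$.

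The paper's route is quite different and avoids any a posteriori correction. One splits $c=P+\phi$ into modes $\alpha_k\le2m$ and $\alpha_k>2m$, sets $M:=\max_{\partial B_1'}P_-$, and takes the \emph{explicitly admissible} competitor
\[
\zeta=r^{2m}\bigl(P+Mh_{2m}\bigr)+r^{\alpha}\bigl(\phi-Mh_{2m}\bigr),\qquad 2m<\alpha<2m+\tfrac12,
\]
which satisfies $\zeta\ge r^\alpha c\ge0$ on $B_1'$ since $P+M\ge0$ there. A direct spectral computation gives $W_{2m}(\zeta)-(1-\kappa)W_{2m}(z)\le C_1M^2\kappa^2-C_2\kappa\lVert\nabla_\theta\phi\rVert_{L^2(\partial B_1,a)}^2$ with $\kappa=\kappa^a_{\alpha,2m}$. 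The exponent then comes from a single interpolation lemma: since $c\ge0$ on $\partial B_1'$ forces $P_-\le|\phi|$ there, and $P$ is $L$-Lipschitz with $L\lesssim\Theta^{1/2}$ (finitely many modes, coefficients $\le\Theta^{1/2}$), a cone-volume estimate plus the weighted trace $H^1(\partial B_1,a)\hookrightarrow L^2(\partial B_1')$ yields
\[
M^{n+1}\lesssim L^{n-1}\int_{\partial B_1'}P_-^2\,d\HH^{n-1}\lesssim \Theta^{\frac{n-1}{2}}\lVert\nabla_\theta\phi\rVert_{L^2(\partial B_1,a)}^2,
\]
i.e.\ $M^2\le C_3\Theta^\beta\lVert\nabla_\theta\phi\rVert^{2(1-\beta)}$ with $\beta=\tfrac{n-1}{n+1}$. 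Choosing $\kappa=\eps\Theta^{-\beta}\lVert\nabla_\theta\phi\rVert^{2\beta}$ and using $W_{2m}(z)\le\lVert\nabla_\theta\phi\rVert^2$ finishes. This Lipschitz/cone mechanism, not a capacity estimate near a zero set, is the missing ingredient in your argument.
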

	The first inequality was originally proved in \cite{gps16} and in \cite{fs16} for $s=\frac12$ and generalized to any $s\in(0,1)$ in \cite{gpps17} and in \cite{geraci}. For the proof, we use an alternative method, that follows the idea in \cite{csv17}, decomposing the datum $c\in H^1(\partial B_1,a)$ in terms of eigenfunctions of $L_a$ restricted to $\partial B_1$. 
	
	The second inequality was originally proved in \cite{csv17} for $s=\frac12$, but this is a new result for each $s\in(0,1)$.\medskip
	
	Moreover, we will give a proof of two epiperimetric inequalities for negative energies\footnote{Only relevant if $W(z)<0$} for $\Ws$ and for $\Wm$ (see Theorem \ref{thm4} and Theorem \ref{thm5}), originally proved for $s=\frac12$ in \cite{car23} and in \cite{csv17} respectively. 
	Using this two epiperimetric inequalities for negative energies, together with the first two epiperimetric inequalities in Theorem \ref{thm1} and Theorem \ref{thm2}, we deduce a frequency gap.
	\begin{proposition}[Frequency gap]\label{gap} 
		Let $u$ be a solution of \eqref{fract2} and $v=u^{x_0}$ be the solution of \eqref{fract5} with $x_0\in\Gamma(u)$, $\vf\in C^{k,\gamma}(\R^n)$, $k\ge2$ and $\gamma\in(0,1)$.
		
		If $v$ is $\lambda-$homogeneous with $\lambda<k+\gamma$, then $$\lambda\not\in(2s,1+s)\cup(1+s,2)\cup\bigcup_{m\in\N}\left((2m-c^-_{m,a})\cup(2m+c^+_{m,a})\right),$$ for some constants $c^\pm_{m,a}>0$, that depend only on $n,m$ and $a$.
	
	In particular, if $u$ is a $\lambda-$homogeneous solution of \eqref{fract4} (with 0 obstacle) with $\lambda>0$, then the same conclusion hold for $u$.

\end{proposition}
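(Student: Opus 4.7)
The plan is to apply each of the four epiperimetric inequalities — Theorems \ref{thm1} and \ref{thm2} together with their negative-energy counterparts (Theorems \ref{thm4} and \ref{thm5}) — to the $\mu$-homogeneous extension $z$ of the boundary trace $c := v|_{\partial B_1}$, for the candidate frequencies $\mu \in \{1+s\} \cup \{2m : m \in \N\}$, and to couple them with the minimality of $v$ in $\mathcal{K}_c$ to derive a contradiction whenever $\lambda$ lies in the forbidden intervals. First I reduce to the zero-obstacle problem: since $v = u^{x_0}$ satisfies \eqref{fract5} with right-hand side $h$ bounded by $C|x|^{k+\gamma-2}$ and is $\lambda$-homogeneous with $\lambda < k+\gamma$, the perturbation is of higher order than the natural scale of $v$, so a scaling argument shows $v$ effectively solves \eqref{fract4}; this is precisely the setting of the ``in particular'' part of the statement, and I work there throughout.

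The central algebraic identity is $W_\lambda(v) = 0$, obtained by integration by parts on $B_1$: the boundary term equals $\lambda \int_{\partial B_1} v^2 |y|^a\,d\mathcal{H}^n$ by $\lambda$-homogeneity, while the bulk term $\int_{B_1} v (-L_a v) |y|^a\,dX$ vanishes by complementarity, since $-L_a v$ is a nonnegative measure supported on $\{v(\cdot,0) = 0\}$. Writing $A := \int_{\partial B_1} c^2 |y|^a\,d\mathcal{H}^n$ and inserting this identity into the explicit Weiss expressions for homogeneous functions sharing the trace $c$, a direct computation gives
\[
W_\mu(v) = (\lambda - \mu)\,A, \qquad W_\mu(z) = \frac{(\lambda - \mu)(\lambda + \mu + n + a - 1)}{n + a + 2\mu - 1}\,A,
\]
so that $W_\mu(v)$ and $W_\mu(z)$ share the same sign and obey the universal ratio
\[
\frac{W_\mu(v)}{W_\mu(z)} = \frac{n + a + 2\mu - 1}{\lambda + \mu + n + a - 1}.
\]

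With this in hand, for $\mu = 1+s$ and $\lambda > 1+s$ (so $W_{1+s}(z) > 0$) Theorem \ref{thm1} provides a competitor $\zeta \in \mathcal{K}_c$ with $W_{1+s}(\zeta) \le (1-\kappa) W_{1+s}(z)$, and the minimality of $v$ in $\mathcal{K}_c$ yields $W_{1+s}(v) \le W_{1+s}(\zeta)$ (both $v$ and $\zeta$ carry the same trace on $\partial B_1$). Chaining the two inequalities gives $\frac{n+a+2s+1}{\lambda+s+n+a} \le 1 - \kappa$, and with the calibrated constant $\kappa = \frac{1+a}{2n+a+5}$ this rearranges, via the substitution $a = 1-2s$, to the single condition $\lambda \ge 2$; hence the whole interval $(1+s, 2)$ is ruled out. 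The symmetric argument, applied with the negative-energy Theorem \ref{thm4} to the case $\lambda < 1+s$ (where $W_{1+s}(z) < 0$), rules out the interval $(2s, 1+s)$.

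At $\mu = 2m$ I run the same scheme using Theorems \ref{thm2} and \ref{thm5}: after rescaling $c$ so that $\|c\|_{L^2(\partial B_1, a)}^2 \le \Theta$ (which enforces \eqref{cond}), the logarithmic improvement becomes $\frac{W_{2m}(v)}{W_{2m}(z)} \le 1 - \varepsilon \Theta^{-\beta} |W_{2m}(z)|^\beta$. The left-hand side, via the universal ratio, deviates from $1$ linearly in $\lambda - 2m$, while the right-hand side is of order $|\lambda - 2m|^\beta$ with $\beta = \frac{n-1}{n+1} < 1$; the linear and sublinear contributions balance only if $|\lambda - 2m| \ge c_m^\pm$ for explicit positive constants $c_m^\pm = c_m^\pm(n,m,a)$, which are exactly the gaps claimed. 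The main obstacle is really just the calibration of constants: the value $\kappa = \frac{1+a}{2n+a+5}$ is precisely engineered so that the $1+s$-gap reaches exactly the next critical value $\lambda = 2$, and at $\mu = 2m$ the balance between linear and sublinear terms must be tracked through the constants of the logarithmic epiperimetric inequalities; by contrast, the reduction to the zero-obstacle problem and the variational minimality of $v$ are straightforward consequences of the framework set up earlier in the paper.
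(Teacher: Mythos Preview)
Your proposal is correct and follows essentially the same route as the paper: reduce to a $\lambda$-homogeneous solution of the zero-obstacle problem, use the identities $W_\mu(v)=(\lambda-\mu)\|c\|^2$ and $W_\mu(z)=\bigl(1+\tfrac{\lambda-\mu}{n+a+2\mu-1}\bigr)W_\mu(v)$ (which are exactly \eqref{terza} and \eqref{quarta}), combine the minimality $W_\mu(v)\le W_\mu(\zeta)$ with each of Theorems \ref{thm1}, \ref{thm2}, \ref{thm4}, \ref{thm5}, and solve the resulting algebraic inequality. Your explicit ratio $\frac{W_\mu(v)}{W_\mu(z)}=\frac{n+a+2\mu-1}{\lambda+\mu+n+a-1}$ is just a repackaging of \eqref{quarta}, and your calibration at $\mu=1+s$ reproduces the paper's bounds $\lambda\ge 2$ and $\lambda\le 2s$ verbatim.

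One small expository imprecision: in the $\mu=2m$, $\lambda<2m$ case you invoke Theorem \ref{thm5}, which is a \emph{standard} (not logarithmic) epiperimetric inequality with a fixed $\varepsilon$, so the ``linear versus sublinear'' balancing you describe does not apply there. The correct argument for that case is simply $\frac{W_{2m}(v)}{W_{2m}(z)}\ge 1+\varepsilon$ (inequality reversed since both energies are negative), which immediately gives $\lambda-2m\le -c_m^-$ for an explicit $c_m^-=c_m^-(n,m,a)$; this is what the paper does. The sublinear comparison is only needed for the $\lambda>2m$ case via Theorem \ref{thm2}.
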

	
	Notice that $\lambda\not\in\bigcup_{m\in\N}\left((2m-c^-_{m,a})\cup(2m+c^+_{m,a})\right)$ is a new result for any $s\in(0,1)$ and it is originally proved in \cite{csv17} for $s=\frac12$ and $\vf\equiv0$. 
	Observe that the function $-\lvert y\rvert^{2s}$ is a solution of \eqref{fract4} (with 0 obstacle), then we are able to prove the best frequency gap around $1+s$. \medskip
	
	Furthermore, we use the epiperimetric inequalities to deduce a characterization of the $\lambda-$homogeneous solutions of \eqref{fract4} (with 0 obstacle), in the case $\lambda=1+s$ and $\lambda=2m$, as described in the following proposition.
	\begin{proposition}\label{cha}
		Let $u$ be a $\lambda-$homogeneous solution of \eqref{fract4} (with 0 obstacle).
		\begin{enumerate}
			\item If $\lambda=1+s$, then $u=Ch_e^s$, for some $C\ge0$ and $e\in \partial B'_1$, where 
			\be\label{he}h_e^s(x,y)=\left(s^{-1}(x\cdot e)-\sqrt{(x\cdot e)^2+y^2}\right)\left(\sqrt{(x\cdot e)^2+y^2}+(x\cdot e)\right)^s.\ee
			\item If $\lambda=2m$, then $u=p_{2m}$, for some $p_{2m}$ that is a polynomial $2m-$homogeneous and $L_a-$harmonic, with $p_{2m}\ge0$ on $B'_1$.
		\end{enumerate}
		In particular we characterized the blow-ups of a solution $u$ at $x_0\in\Gamma(u)$ with frequency $\lambda=1+s$ or $\lambda=2m.$
	\end{proposition}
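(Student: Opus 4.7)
The plan is to combine the four epiperimetric inequalities (Theorems \ref{thm1} and \ref{thm2} on the positive side, Theorems \ref{thm4} and \ref{thm5} on the negative side) to pin the value of $W_\lambda(u)$ at zero, and then to convert this identity into the claimed algebraic form of $u$ via the spherical Rayleigh quotient satisfied by its trace on $\partial B_1$.

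For the first step, since $u$ is $\lambda$-homogeneous it coincides with the $\lambda$-homogeneous extension of its own trace $c:=u|_{\partial B_1}$, and as a solution of \eqref{fract4} it minimises $\mathcal{E}$ in $\mathcal{K}_c$. Taking $z=u$ in Theorem \ref{thm1} (if $\lambda=1+s$) or Theorem \ref{thm2} (if $\lambda=2m$) produces a competitor $\zeta\in\mathcal{K}_c$ with $W_\lambda(\zeta)\le(1-\kappa)W_\lambda(u)$ (or the logarithmic analogue). Since $\zeta$ and $u$ share boundary values, the identity $W_\lambda(v)=\mathcal{E}(v)-\lambda\|v\|^2_{L^2(\partial B_1,a)}$ combined with minimality gives $W_\lambda(\zeta)\ge W_\lambda(u)$. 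Chaining both inequalities forces $W_\lambda(u)\le 0$; the mirror argument applied via Theorems \ref{thm4} and \ref{thm5} in the negative-energy regime yields $W_\lambda(u)\ge 0$. Hence $W_\lambda(u)=0$.

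For the second step, writing $u=r^\lambda c(\theta)$ and integrating in polar coordinates gives
\[
W_\lambda(u)=\frac{1}{2\lambda+n+a-1}\left(\int_{\partial B_1}|\nabla_\theta c|^2|y|^a\,d\mathcal{H}^n-\lambda(\lambda+n+a-1)\int_{\partial B_1}c^2|y|^a\,d\mathcal{H}^n\right),
\]
so $W_\lambda(u)=0$ together with the Signorini constraint $c\ge 0$ on the equator forces $c$ to realise the Rayleigh quotient of the spherical Signorini eigenproblem at the eigenvalue $\lambda(\lambda+n+a-1)$. For $\lambda=2m$ the corresponding eigenspace (for functions even in $y$) consists of the restrictions to $\partial B_1$ of $L_a$-harmonic polynomials of degree $2m$, which combined with $u\ge 0$ on $B_1'$ yields $u=p_{2m}$. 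For $\lambda=1+s$ the identification of the Signorini eigenspace with the family $\{Ch_e^s:C\ge 0,\,e\in\partial B_1'\}$ can be imported from the blow-up analysis at regular points in \cite{css08}, giving $u=Ch_e^s$. The statement about blow-ups is then immediate, since any blow-up at $x_0\in\Gamma(u)$ with frequency $\lambda$ is a $\lambda$-homogeneous solution of \eqref{fract4} to which the above applies. The hardest part is the classification step for $\lambda=1+s$: proving directly that the Signorini eigenspace at the critical eigenvalue reduces to the one-parameter family $\{Ch_e^s\}$ requires either invoking the classical result or a dimension-reduction argument that uses the Signorini complementarity and the rigidity of the half-hyperplane shape of the contact set.
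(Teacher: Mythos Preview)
Your first step is unnecessary: the identity $W_\lambda(u)=0$ for a $\lambda$-homogeneous solution is an immediate consequence of integration by parts (this is \eqref{w4}), so invoking all four epiperimetric inequalities to pin down the sign is circuitous, though not wrong.

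The real gap is in your second step. Knowing $W_\lambda(u)=0$, i.e.\ that the trace $c$ has spherical Rayleigh quotient equal to $\lambda(\lambda+n+a-1)$, does \emph{not} force $c$ into a single eigenspace: a mixture of low and high modes can produce the same quotient by cancellation (take $c=c_1\phi_{k_1}+c_2\phi_{k_2}$ with $\lambda^a_{k_1}<\lambda^a(\lambda)<\lambda^a_{k_2}$ and suitable coefficients). The Signorini constraint $c\ge 0$ on the equator does not by itself rule this out. For $\lambda=2m$ the paper resolves this not with Theorem~\ref{thm2} but with its \emph{strong} form \eqref{stronglog}, which carries the extra negative term $-\varepsilon_1\Theta^{-\beta}\lVert\nabla_\theta\phi\rVert^{2+2\beta}$, where $\phi$ is the projection of $c$ onto modes with $\alpha_k>2m$. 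The chain
\[
0=W_{2m}(u)\le W_{2m}(\zeta)\le 0-\varepsilon_1\Theta^{-\beta}\lVert\nabla_\theta\phi\rVert^{2+2\beta}
\]
then kills the high modes outright; only after this does the identity $W_{2m}=0$ together with \eqref{prima} force the low modes ($\alpha_k<2m$) to vanish as well. Your proposal never invokes this extra term and therefore cannot separate the modes.

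For $\lambda=1+s$ you defer to \cite{css08}, which is logically permissible but defeats the stated purpose of the proposition: the paper's aim is precisely to give a self-contained alternative proof via epiperimetric inequalities. The paper instead observes that equality must hold in Theorem~\ref{thm1}, and the equality-case analysis (Remark~\ref{epi2}) forces the decomposition $c=Ch_e^s+\phi$ with $c_0=0$ and $\phi$ an eigenfunction at $\lambda^a(2)$ satisfying $\phi\ge 0$ on $B_1'$; a final direct computation of $W_{1+s}(Ch_e^s+r^{1+s}\phi)$ using \eqref{w4} and \eqref{prima} shows $\phi\equiv 0$. This structural information extracted from the equality case is exactly what your Rayleigh-quotient argument lacks.
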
\medskip

	Finally we use the epiperimetric inequality for $\Ws$ in Theorem \ref{thm1} and the logarithmic epiperimetric inequality for $\Wm$ in Theorem \ref{thm2} to get the regularity and the structure of the free boundary. In particular we prove the regularity of the points on the free boundary with frequency $1+s$ , denoted by $\Gamma_{1+s}(u)$, and we describe the structure of the points with frequency $2m$, denoted by $\Gamma_{2m}(u)$, with $2m\le k.$ See \eqref{gamma} below for the definition of $\Gamma_{\lambda}(u)$.
	\begin{theorem}\label{thm3} Let $u$ be a solution of \eqref{fract2} with obstacle $\vf\in C^{k,\gamma}(\R^n)$, $k\ge2$ and $\gamma\in(0,1)$. \begin{enumerate}
			\item $\Gamma_{1+s}(u)$ is locally a $C^{1,\alpha}$ submanifold of dimension $n-1$, for some $\alpha>0$, i.e. for all $x_0\in\Gamma_{1+s}(u)$, there is $\rho>0$ and $g:U\subset\R^{n-1}\to\R$ of class $C^{1,\alpha}$ such that $$\Gamma_{1+s}(u)\cap B'_\rho(x_0)=\{(x,y)\in\R^{n+1}:x_n=g(x_1,\ldots,x_{n-1})\}\cap B'_\rho(x_0).$$
			\item $\Gamma_{2m}(u)$ is contained in the union of at most countably many submanifolds of class $C^{1,\log}$, for all $2m\le k$. In particular, for such $m$, we have that for every $j\in\{0,\ldots n-1\},$ 
			$$\Gamma_{2m}^j(u):=\{x_0\in \Gamma_{2m}:d_{2m}^{x_0}=j\}$$ is locally contained in a $C^{1,\log}$ submanifold of dimension $j$, where $d_{2m}^{x_0}$ is defined in \eqref{d} below.
			
			In particular, when $\vf\in C^{\infty}(\R^n)$, the singular sets $\cup_{m\in \N}\Gamma_{2m}^j(u)$ is contained in the union of countably many $j$-dimensional submanifolds of class $C^{1,\log}$. Moreover, the singular set $\mbox{Sing}(u)=\cup_{m\in \N}\Gamma_{2m}(u)$ is contained in the union of countably many submanifolds of class $C^{1,\log}$.
		\end{enumerate}
		
	\end{theorem}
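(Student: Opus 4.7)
The plan is to turn Theorem~\ref{thm1} into a power-type decay for $\Ws$ and Theorem~\ref{thm2} into a logarithmic decay for $\Wm$, combine each with the right-hand side $h$ coming from the reduction to zero obstacle in \eqref{fract5}, and then read off the regularity of $\Gamma_{1+s}(u)$ and the structure of $\Gamma_{2m}(u)$ from these decay rates, closely following the schemes of \cite{gpps17,geraci} for the regular part and \cite{csv17} for the singular part.

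For part~(1), fix $x_0\in\Gamma_{1+s}(u)$, set $v=u^{x_0}$ and $v_r(X):=v(rX)/r^{1+s}$. Comparing $v_r$ with the competitor $\zeta$ furnished by Theorem~\ref{thm1} applied to the $(1+s)$-homogeneous extension $z_r$ of $v_r|_{\partial B_1}$, and using the (quasi-)minimality of $v_r$ together with \eqref{estimate}, one derives a differential inequality of the form
$$\frac{d}{dr}\Ws(r,v)\ge\frac{\kappa}{r}\Ws(r,v)-C r^{\sigma-1}$$
for some $\sigma>0$, the error term being controlled because $k+\gamma-2>0$. Integration gives $\Ws(r,v)\le C r^{\alpha}$; a standard Cauchy-in-$r$ argument then upgrades this to $\|v_r-v_0\|\ab\le C r^{\alpha/2}$, so the blow-up $v_0$ is unique and, by Proposition~\ref{cha}, equals $C(x_0)h_{e(x_0)}^s$. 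H\"older continuity of $x_0\mapsto(C(x_0),e(x_0))$ together with the explicit non-degeneracy of $h_e^s$ in the direction $e$ then yields via the implicit function theorem that $\Gamma_{1+s}(u)$ is locally a $C^{1,\alpha}$ manifold of dimension $n-1$.

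For part~(2) the same scheme is run with $\lambda=2m$. Theorem~\ref{thm2} only produces a gain of order $|\Wm|^{1+\beta}$, so the corresponding differential inequality is of Lojasiewicz type and its integration yields
$$|\Wm(r,v)|\le\frac{C}{|\log r|^{1/\beta}},\qquad \|v_r-v_0\|\ab\le\frac{C}{|\log r|^{1/(2\beta)}};$$
the error term is controlled by \eqref{estimate} precisely when $k+\gamma-2>2m-2$, i.e.\ when $2m\le k$. Thus the blow-up $v_0=p_{2m}^{x_0}$ is unique, and the map $x_0\mapsto p_{2m}^{x_0}$ is continuous with a logarithmic modulus. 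Stratifying $\Gamma_{2m}(u)$ by the dimension $d_{2m}^{x_0}$ of the invariance subspace of $p_{2m}^{x_0}$, and combining the logarithmic rate with a Monneau-type monotonicity adapted to the weight $\y$, one gets the compatibility needed to invoke Whitney's extension theorem in the $C^{1,\log}$ category, producing the announced stratification; the smooth-obstacle statement follows by letting $k\to\infty$.

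The hard part is showing that the right-hand side $h$ in \eqref{fract5} does not spoil the gain provided by the (logarithmic) epiperimetric inequality. At regular points this is a careful scaling analysis of $h$ against the power $r^\alpha$, but at singular points the gain $\Theta^{-\beta}|\Wm|^\beta$ is itself very small, so the balance with the error of size $r^{k+\gamma-2m}$ is delicate and is precisely what forces the threshold $2m\le k$ in the statement; one must further verify that the compatibility of the polynomials $p_{2m}^{x_0}$ across nearby base points decays at the same logarithmic rate, so that Whitney's extension theorem produces $C^{1,\log}$ (rather than merely $C^1$) submanifolds.
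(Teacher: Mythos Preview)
Your outline is essentially the paper's own strategy: epiperimetric inequality $\Rightarrow$ decay of the Weiss energy $\W_\lambda^{x_0}$ (Proposition~\ref{dec}) $\Rightarrow$ decay of the homogeneous rescalings and uniqueness of blow-up (Proposition~\ref{homdec}) $\Rightarrow$ H\"older/logarithmic continuity of the blow-up in the base point (Proposition~\ref{prop3}) $\Rightarrow$ the geometric conclusion. Two small divergences are worth flagging. First, the paper does \emph{not} invoke any Monneau-type monotonicity in part~(2); the logarithmic rate on $x_0\mapsto p_{2m}^{x_0}$ obtained from the rescaling decay is fed directly into the $C^{m,\omega}$ Whitney extension theorem (with $\omega$ a logarithmic modulus), and the implicit function theorem is then applied to suitable derivatives $D^{\beta_i}F$ of the Whitney extension. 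Second, for part~(1) the paper does not use an implicit function theorem but rather the classical cone argument: the H\"older map $x_0\mapsto e(x_0)$ forces $\Gamma_{1+s}(u)$ to lie between two opposite cones at every scale, which gives the Lipschitz graph directly, and then $\nabla g = -e''(x)/(e(x)\cdot e_n)$ inherits the $C^{0,\alpha}$ modulus. Finally, you pass over the non-degeneracy step (Proposition~\ref{nondeg}): one must show $H_0^{x_0}>0$ so that the homogeneous blow-up is nontrivial (hence $C(x_0)>0$, respectively $p_{2m}^{x_0}\not\equiv 0$), which is what makes the cone argument and the Whitney step work; this is itself a consequence of the decay estimates you derive, but it is a separate lemma in the paper and should be singled out.
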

	Here we have defined \be\label{d}d_{2m}^{x_0}:=\mbox{dim}\{\xi\in\mathbb{R}^{n}:\xi\cdot\nabla_xp_{2m}^{x_0}(x,0)\equiv 0\}\ee to be the dimension of the "tangent plane", where $p_{2m}^{x_0}$ is the unique homogeneous blow-up (see Proposition \ref{cha}) of $u$ at $x_0\in\Gamma_{2m}(u)$.
	
	In particular, we improve the regularity of submanifolds that contain $\Gamma_{2m}^j(u)$ from $C^{1}$ (proved in \cite{gr17}) to $C^{1,\log}$.
	\subsection{Structure of the paper} The paper is organized as follows.
	\begin{itemize}
		\item In Section \ref{prel}, we recall the generalized Almgren's frequency function, the Weiss' energy $W$ for 0 obstacle and the Weiss' energy $\W$ for non-zero obstacle.
		We also introduce the operator $L_a^S$, i.e. $L_a$ restricted to $\partial B_1$, its eigenfunctions and its relation to the Weiss' energy $W$. Finally, we recall the properties of the function $h_e^s$ defined in \eqref{he}.
		\item In Section \ref{epis}, we prove Theorem \ref{thm1}, i.e. the epiperimetric inequality for $\Ws$.
		\item In Section \ref{epim}, we prove Theorem \ref{thm2}, i.e. the logarithmic epiperimetric inequality for $\Wm$.
		\item In Section \ref{epin}, we state and prove two epiperimetric inequalities for negative energies $\Ws$ and $\Wm$ (Theorem \ref{thm4} and Theorem \ref{thm5}).
		\item In Section \ref{fgap}, using the four epiperimetric inequalities proved in the previous sections, we establish a frequency gap in Proposition \ref{gap}.
		\item In Section \ref{char}, using Theorem \ref{thm1} and Theorem \ref{thm2}, we prove Proposition \ref{cha}, i.e. the characterization of the $\lambda$-homogeneous solutions of \eqref{fract4} (with 0 obstacle), in the case $\lambda=1+s$ and $\lambda=2m$.
		\item In Section \ref{final}, we use Theorem \ref{thm1} and Theorem \ref{thm2} to prove our main result on the regularity and the structure of the free boundary (Theorem \ref{thm3}).
	\end{itemize}
	\subsection*{Acknowledgment.} I would like to thank Bozhidar Velichkov for all the useful discussion and encouragement.
	
	The author was partially supported by the European Research Council (ERC), EU Horizon 2020 programme, through the project ERC VAREG - \textit{Variational approach to the regularity of the free boundaries} (No. 853404).
	\section{Preliminaries}\label{prel}

	\subsection{Generalized Almgren's frequency function} The original generalized Almgren's frequency function in \cite{css08} must be modified in the case $\vf\in C^{k,\gamma}(\R^n)$, as in the following proposition. 
	
	We follow \cite{gr17}, but a similar generalized Almgren's frequency can be found in \cite{css08} or in \cite{bfr18}.
	
	First we recall the function $H^{x_0}(r):=H^{x_0}(r,v)$ as in \eqref{H}, with $v=u^{x_0}$. We drop the dependence on $x_0$ if it is not ambiguous.
	\begin{proposition}[Monotonicity of generalized Almgren's frequency]\label{mono} Let $u$ be a solution of \eqref{fract2} and $v=u^{x_0}$ be the solution of \eqref{fract5} with $x_0\in\Gamma(u)$, $\vf\in C^{k,\gamma}(\R^n)$, $k\ge2$ and $\gamma\in(0,1)$. We define \bea\Phi^{x_0}(r,v):=(r+Cr^{1+p})\frac{d}{dr}\log\max\{H^{x_0}(r),r^{n+a+2(k+\gamma-p)}\},\eea for $p\in(0,\gamma)$ and $C$ large enough. Therefore, there is $r_0>0$ such that the function $r\mapsto \Phi^{x_0}(r,v)$ is monotone increasing for $r\in(0,r_0)$.
		\begin{proof}
			The proof is technical and we skip it, since it is standard and proved in many variations. We refer to Proposition 6.1 in \cite{gr17} for the complete proof.
		\end{proof}
	\end{proposition}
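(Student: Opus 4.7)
The plan is to execute the Almgren--Garofalo scheme for the weighted operator $L_a$, adapted to handle the non-zero right-hand side $h=h^{x_0}$ via the growth estimate \eqref{estimate}. Assume $x_0=0$ and set $D(r) := \int_{B_r} |\nabla v|^2\,\y\, dX$. The first step is to establish the two Rellich-type identities. Polar differentiation of $H(r)$ gives
$$H'(r) = \frac{n+a}{r} H(r) + 2\int_{\partial B_r} v\,\partial_\nu v\,\y\, d\mathcal{H}^n,$$
while integrating the identity $\mathrm{div}(\y\nabla v) = \y\, L_a v$ against $v$ on $B_r$ and exploiting the complementarity (namely $v \equiv 0$ on the contact set while $-L_a v = h$ off it, so that $v\, L_a v = -v\,h$ a.e.) produces
$$D(r) = \int_{\partial B_r} v\,\partial_\nu v\,\y\, d\mathcal{H}^n + \int_{B_r} v\, h\,\y\, dX.$$
A Pohozaev-type identity for $L_a$, obtained by testing the equation against $X\cdot\nabla v$, then yields a formula for $D'(r)$ whose principal part is $\frac{n+a-1}{r}D(r) + 2\int_{\partial B_r}(\partial_\nu v)^2 \y\, d\mathcal{H}^n$, plus error terms linear in $h$ and $X\cdot \nabla v$.

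The second step is to compute $\frac{d}{dr}\log H(r)$ and isolate the ``Cauchy--Schwarz defect'' $H(r)\int_{\partial B_r}(\partial_\nu v)^2 \y - \bigl(\int_{\partial B_r} v\,\partial_\nu v\,\y\bigr)^2 \geq 0$, which drives the monotonicity in the zero-obstacle case. The errors introduced by $h$ are bounded, after Cauchy--Schwarz and a weighted trace/Poincaré inequality $\|v\|_{L^2(B_r,a)}^2 \lesssim r H(r)+r^2 D(r)$, by $\|h\|_{L^\infty(B_r)} \leq C r^{k+\gamma-2}$ multiplied by frequency-type quantities. After dividing by $H(r)$, the relative size of these errors is $O\bigl(r^{2(k+\gamma)}/H(r)\bigr)$, which under the truncation hypothesis $H(r)\geq r^{n+a+2(k+\gamma-p)}$ collapses to $O(r^{2p})$ at worst.

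The third step is to split into two regimes according to whether the truncation is active. On $\{H(r) \geq r^{n+a+2(k+\gamma-p)}\}$ one has $\Phi(r,v) = (r+Cr^{1+p})H'(r)/H(r)$; differentiating and inserting the estimates of Step~2, the extra factor $Cr^{1+p}$ contributes a positive ``gain'' of order $Cpr^{p}$ times the Cauchy--Schwarz defect, which dominates the normalized error $O(r^{\gamma - p})$ provided $p\in(0,\gamma)$ and $C=C(n,a,k,\gamma,\|\vf\|_{C^{k,\gamma}})$ is large. On the complementary set $\{H(r) < r^{n+a+2(k+\gamma-p)}\}$, $\Phi$ collapses to $(1+Cr^p)(n+a+2(k+\gamma-p))$, which is strictly increasing. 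Continuity of $\Phi$ at the crossing radii (where both expressions agree because $H'/H \geq (n+a+2(k+\gamma-p))/r$ exactly when $H$ exits the threshold from below) then yields the desired monotonicity on some interval $(0,r_0)$.

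The main obstacle is the delicate bookkeeping in Step~2: one must verify that every error term arising from $h$ after differentiating $\Phi$ is absorbed either by the Cauchy--Schwarz defect or by the $Cpr^p$ gain coming from the perturbation, and this is what forces the truncation exponent to be precisely $n+a+2(k+\gamma-p)$ and the perturbation exponent to satisfy $p<\gamma$. The full power-counting is carried out in \cite[Proposition~6.1]{gr17}, whose argument transfers verbatim to our $v=u^{x_0}$ since \eqref{fract5} has the same structural form treated there.
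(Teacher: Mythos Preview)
The paper does not actually prove this proposition: its entire proof reads ``The proof is technical and we skip it, since it is standard and proved in many variations. We refer to Proposition~6.1 in \cite{gr17} for the complete proof.'' Your proposal likewise terminates in the same reference, but you additionally supply a correct sketch of the Almgren--Garofalo scheme (Rellich/Pohozaev identities, Cauchy--Schwarz defect, error absorption via the multiplicative perturbation $(1+Cr^p)$, and the truncation dichotomy), so there is no divergence in approach---you have simply fleshed out what the paper leaves implicit. One minor imprecision: in Step~2 the normalized error is not literally $O(r^{2(k+\gamma)}/H(r))$, and in Step~3 the gain $Cpr^{p-1}$ from differentiating the perturbation absorbs the $h$-errors directly rather than multiplying the Cauchy--Schwarz defect; but the power-counting conclusion ($p<\gamma$ suffices) is correct and the details are in the cited reference anyway.
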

	
	By the previous proposition, if $v=u^{x_0}$, it is well-defined $$\Phi^{x_0}(0^+,v):=\lim_{r\to0^+}\Phi^{x_0}(r,v),$$ for $x_0\in\Gamma(u)$.
	
	In the case of obstacle $\vf\equiv0$, if the "pure" Almgren's frequency $N^{x_0}(0^+,u)=\lambda$, with $N^{x_0}$ as in \eqref{alm}, then we can consider a blow-up of $u$ at $x_0$, that is a $\lambda-$homogeneous solution (see \cite{acs08}).
	
	We want a similar result for $\vf\in C^{k,\gamma}(\R^n)$. Precisely, we will show that if $\Phi^{x_0}(0^+,v)=n+a+2\lambda$, then the blow-up of $v$ at $x_0$ is a $\lambda-$homogeneous solution. But this is only true if $\lambda<k+\gamma$, and this is the motivation for defining a new generalized Almgren's frequency, which was originally introduced in \cite{css08}.
	
	To prove this result, we need the following two lemmas from \cite{gr17}.
	\begin{lemma}\label{h1h2}
	Let $u$ be a solution of \eqref{fract2} and $v=u^{x_0}$ be the solution of \eqref{fract5} with $x_0\in\Gamma(u)$, $\vf\in C^{k,\gamma}(\R^n)$, $k\ge2$ and $\gamma\in(0,1)$. If $\Phi^{x_0}(0^+,v)=n+a+2\lambda$ with $\lambda<k+\gamma$, then \be\label{h1} H^{x_0}(r,v)\le Cr^{n+a+2\lambda}\ee for all $r\in(0,r_0).$ Moreover, for every $\eps>0$ there is $r_\eps$ such that \be\label{h2} H^{x_0}(r,v)\ge Cr^{n+a+2(\lambda+\eps)}\ee for all $r\in(0,r_\eps)$.
		In particular, $\Phi^{x_0}(0^+,v)$ does not depend on $p\in(0,\gamma)$.
		\begin{proof} 
			The proof of the first part follows by the monotonicity of generalized Almgren's frequency and an integration from $r$ to $r_0$. The second part is similar. See Lemma 6.4 in \cite{gr17} for more details.
		\end{proof}	
	\end{lemma}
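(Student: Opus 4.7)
The plan is to leverage the monotonicity of $\Phi^{x_0}$ from Proposition \ref{mono} and integrate the resulting differential inequality for $\log H^{x_0}$. Since $\lambda<k+\gamma$, I first fix $p\in(0,\gamma)$ with $p<k+\gamma-\lambda$, so that the ``threshold exponent'' satisfies $n+a+2(k+\gamma-p)>n+a+2\lambda$. This will ensure that the truncation in the definition of $\Phi^{x_0}$ becomes inactive for small $r$.

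The key preliminary step is to show that, after possibly shrinking $r_0$, one has $H^{x_0}(r)\geq r^{n+a+2(k+\gamma-p)}$ for all $r\in(0,r_0)$, so that $\Phi^{x_0}(r,v)=(r+Cr^{1+p})(\log H^{x_0})'(r)$ on this range. Indeed, if instead $H^{x_0}(r_j)<r_j^{n+a+2(k+\gamma-p)}$ for some $r_j\to0^+$, the explicit form of $\Phi^{x_0}$ in the truncated regime gives $\Phi^{x_0}(r_j,v)=(1+Cr_j^p)(n+a+2(k+\gamma-p))$, and monotonicity of $\Phi^{x_0}$ together with passing to the limit forces $\Phi^{x_0}(0^+,v)\geq n+a+2(k+\gamma-p)>n+a+2\lambda$, contradicting the hypothesis.

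With the truncation ruled out, the upper bound \eqref{h1} follows by integrating the inequality $\Phi^{x_0}(r,v)\geq n+a+2\lambda$, i.e.
\[
\frac{d}{dr}\log H^{x_0}(r)\;\geq\;\frac{n+a+2\lambda}{r(1+Cr^p)},
\]
from $r$ to $r_0$. Since $\int_0^{r_0}\bigl(\tfrac{1}{t}-\tfrac{1}{t(1+Ct^p)}\bigr)\,dt$ converges (the integrand is $O(t^{p-1})$), the error between $\int_r^{r_0}\tfrac{dt}{t(1+Ct^p)}$ and $\log(r_0/r)$ stays bounded, yielding $H^{x_0}(r)\leq Cr^{n+a+2\lambda}$. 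For the lower bound \eqref{h2}, fix $\eps>0$; by $\Phi^{x_0}(0^+,v)=n+a+2\lambda$ and monotonicity there is $r_\eps>0$ with $\Phi^{x_0}(r,v)\leq n+a+2\lambda+\eps$ for $r<r_\eps$, hence $(\log H^{x_0})'(r)\leq\tfrac{n+a+2(\lambda+\eps)}{r}$ (after reabsorbing the $(1+Cr^p)^{-1}$ factor by shrinking $r_\eps$), and integrating gives $H^{x_0}(r)\geq cr^{n+a+2(\lambda+\eps)}$.

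Finally, the independence of $\Phi^{x_0}(0^+,v)$ from $p\in(0,\gamma)$ follows at once from the two-sided bounds: the exponent $\lambda$ is determined by the asymptotic rate of $H^{x_0}(r)$, which is intrinsic to $v$. The only delicate point is the preliminary step of eliminating the truncation regime; once that is in place, everything reduces to a straightforward Gr\"onwall-type integration, and the arguments follow those of Lemma 6.4 in \cite{gr17}.
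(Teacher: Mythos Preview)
Your argument is correct and follows exactly the approach the paper indicates: monotonicity of the generalized Almgren frequency $\Phi^{x_0}$ combined with integrating the resulting differential inequality for $\log H^{x_0}$, which is precisely the content of Lemma~6.4 in \cite{gr17}. One small remark: you write ``I first fix $p\in(0,\gamma)$ with $p<k+\gamma-\lambda$'', but $p$ is already given in the definition of $\Phi^{x_0}$; fortunately your contradiction argument for ruling out the truncated regime in fact works for any $p$ with $\lambda\neq k+\gamma-p$ (you get $\Phi^{x_0}(0^+,v)=n+a+2(k+\gamma-p)\neq n+a+2\lambda$ either way), so no special choice is needed.
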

	
	Now we proceed with the second lemma.
	\begin{lemma}
		Let $u$ be a solution of \eqref{fract2} and $v=u^{x_0}$ be the solution of \eqref{fract5} with $x_0\in\Gamma(u)$, $\vf\in C^{k,\gamma}(\R^n)$, $k\ge2$ and $\gamma\in(0,1)$. If $\Phi^{x_0}(0^+,v)=n+a+2\lambda$ with $\lambda<k+\gamma$, then \be\label{est} \left\lvert\int_{B_r(x_0)}vh\y\,d\HH^n\right\rvert\le Cr^{n+a+\lambda+k+\gamma-1}.\ee
		\begin{proof} 
			
			By \eqref{estimate}, we obtain \bea \left\lvert\int_{B_r(x_0)}vh\y\,d\HH^n\right\rvert&\le Cr^{k+\gamma-2}\int_{B_r}\lvert v\rvert\y\,dX\\& \le Cr^{k+\gamma-2}\left(\int_{B_r(x_0)}\y\,dX\right)^\frac12\left(\int_{B_r(x_0)}v^2\y\right)^\frac12\\&\le Cr^{k+\gamma-2}r^{\frac{n+a+1}{2}}r^{\frac{n+a+2\lambda+1}{2}}=Cr^{n+a+\lambda+k+\gamma-1},\eea where in the last inequality we used \eqref{h1}.
		\end{proof}	
	\end{lemma}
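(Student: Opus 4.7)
The plan is to combine the pointwise decay of $h$ near $x_0$ with an $L^2$ growth bound on $v$ coming from the preceding lemma. The two ingredients are: \eqref{estimate}, which gives the pointwise estimate $|h(x,y)|\le C|x-x_0|^{k+\gamma-2}$, and the bound \eqref{h1} of Lemma \ref{h1h2}, which gives $H^{x_0}(\rho,v)\le C\rho^{n+a+2\lambda}$ for all $\rho\in(0,r_0)$. Since $k\ge 2$ and $\gamma\in(0,1)$ we have $k+\gamma-2\ge 0$, so on $B_r(x_0)$ the uniform bound $|h|\le Cr^{k+\gamma-2}$ holds, and this scalar can be pulled out of the integral.

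The task then reduces to estimating $\int_{B_r(x_0)}|v|\,\y\,dX$. I would apply the Cauchy--Schwarz inequality with respect to the weight $\y$:
\bea
\int_{B_r(x_0)}|v|\,\y\,dX \le \left(\int_{B_r(x_0)}\y\,dX\right)^{1/2}\left(\int_{B_r(x_0)}v^2\,\y\,dX\right)^{1/2}.
\eea
The first factor is purely geometric: after the rescaling $X=x_0+r\xi$ one sees $\int_{B_r(x_0)}\y\,dX=Cr^{n+a+1}$, which contributes a power $r^{(n+a+1)/2}$.

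For the second factor, I would convert the spherical $L^2$ bound of Lemma \ref{h1h2} into a solid one via polar integration in $\rho$:
\bea
\int_{B_r(x_0)}v^2\,\y\,dX=\int_0^r H^{x_0}(\rho,v)\,d\rho\le C\int_0^r \rho^{n+a+2\lambda}\,d\rho\le Cr^{n+a+2\lambda+1}.
\eea
Multiplying the three factors yields $Cr^{k+\gamma-2}\cdot r^{(n+a+1)/2}\cdot r^{(n+a+2\lambda+1)/2}=Cr^{n+a+\lambda+k+\gamma-1}$, which is precisely the bound \eqref{est}. I do not foresee any substantial obstacle: the argument is a H\"older inequality followed by a weighted-volume scaling computation, and the only non-trivial input is the $L^2$ growth bound from Lemma \ref{h1h2}, which has already been assumed.
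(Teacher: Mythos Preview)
Your proof is correct and follows essentially the same approach as the paper: pull out the bound $|h|\le Cr^{k+\gamma-2}$ from \eqref{estimate}, apply Cauchy--Schwarz in the weighted measure $\y\,dX$, and conclude via the scaling of $\int_{B_r}\y\,dX$ together with the growth estimate \eqref{h1}. The only difference is cosmetic: you make explicit the polar integration $\int_{B_r}v^2\y\,dX=\int_0^r H^{x_0}(\rho,v)\,d\rho$ that the paper leaves implicit when it invokes \eqref{h1} to bound the solid $L^2$ norm.
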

	
	Now we are ready to prove the existence of a $\lambda-$homogeneous blow-up, when $\lambda<k+\gamma$. We denote by \be\label{c1a}C^{1,\alpha}_a(\Omega):=\{w\in C^1(\Omega):\norm w_{C^{1,\alpha}_a(\Omega)}<+\infty\},\ee where \bea\norm w_{C^{1,\alpha}_a(\Omega)}:=\norm w_{C^{\alpha}(\Omega)}+\lVert \nabla_x w\rVert_{C^{\alpha}(\Omega)}+\lVert \y\partial _y w\rVert_{C^{\alpha}(\Omega)}.\eea
	\begin{proposition} \label{prop} Let $u$ be a solution of \eqref{fract2} and $v=u^{x_0}$ be the solution of \eqref{fract5} with $x_0\in\Gamma(u)$, $\vf\in C^{k,\gamma}(\R^n)$, $k\ge2$ and $\gamma\in(0,1)$. If $\Phi^{x_0}(0^+,v)=n+a+2\lambda$ with $\lambda<k+\gamma$, then, up to subsequences, the rescalings \be\label{rescaling}v_{r,x_0}(x,y):=\frac{v(x_0+rx,ry)}{\frac{1}{r^{n+a}}\int_{\partial B_r(x_0)} v ^2\y\,d\mathcal{H}^n}\ee converge in $C^{1,\alpha}_a(B_R^+)$ for all $R>0$ to a blow-up $v_{0,x_0}$, as $r\to0^+$, which is a solution of \eqref{fract4} (with 0 obstacle) and it is $\lambda-$homogeneous.
		\begin{proof}
			We can proceed as in the proof of Proposition 6.6 in \cite{gr17}.
			
			We want to point out that, since the values of $\Phi^{x_0}(0^+,v)$ do not depend on $p\in(0,\gamma)$, by Lemma \ref{h1h2}, we can choose $p\in(0,\gamma)$ small enough such that $$\lambda+\eps<k+\gamma-p,$$ for some $\eps>0$. Hence there is $r_1>0$ such that $H(r)\ge r^{n+a+2(k+\gamma-p)}$ for all $r\in(0,r_1)$, by \eqref{h2}.
			
			In particular, the monotonicity of the function $r\mapsto r(1+Cr^p)\frac{H'(r)}{H(r)}$ is equivalent to the monotonicity of the function $r\mapsto \Phi^{x_0}(r,v)$ for $r$ small enough.
		\end{proof}
	\end{proposition}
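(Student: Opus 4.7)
The plan is to follow the Almgren-type blow-up argument of Proposition 6.6 in \cite{gr17}, in four steps, exploiting the technical remark included in the statement to absorb the right-hand side $h$.

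First, I would extract a weak blow-up. The normalization gives $\|v_{r, x_0}\|_{L^2(\partial B_1, a)}^2 = 1$, while the identity $\int_{B_1} |\nabla v_{r, x_0}|^2 |y|^a \, dX = N^{x_0}(r, v)$ (the pure Almgren ratio at scale $r$), combined with the monotonicity of $\Phi^{x_0}$ in Proposition \ref{mono} and the bound \eqref{h1}, gives $\int_{B_R} |\nabla v_{r, x_0}|^2 |y|^a \, dX \le C(R)$ for every $R > 0$. Along a subsequence $r_j \to 0^+$, the compact weighted Sobolev embedding yields $v_{r_j, x_0} \rightharpoonup v_{0, x_0}$ weakly in $H^1_{loc}(\mathbb{R}^{n+1}, a)$ and strongly in $L^2_{loc}(|y|^a)$, with $\|v_{0, x_0}\|_{L^2(\partial B_1, a)} = 1$, so that the blow-up is non-trivial.

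Next, I would identify $v_{0, x_0}$ as a solution of \eqref{fract4}. The rescaled equation reads $-L_a v_{r, x_0} = h_r$, with $h_r(x, y) := \mu_r^{-1} r^2 h(x_0 + rx, ry)$ and $\mu_r := \bigl(r^{-(n + a)} H^{x_0}(r, v)\bigr)^{1/2}$. Using the flexibility pointed out in the statement, I would pick $p \in (0, \gamma)$ so small that $\lambda + \varepsilon < k + \gamma - p$ for some $\varepsilon > 0$; then \eqref{h2} gives $\mu_r \ge c\, r^{\lambda + \varepsilon}$, while \eqref{estimate} gives $|h(x_0 + rx, ry)| \le C\, r^{k + \gamma - 2} |x|^{k + \gamma - 2}$. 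Combining, $|h_r(x, y)| \le C\, r^{k + \gamma - \lambda - \varepsilon} |x|^{k + \gamma - 2} \to 0$ locally uniformly. Passing to the limit in the obstacle inequalities of \eqref{fract5} satisfied by $v_{r, x_0}$ then yields \eqref{fract4} for $v_{0, x_0}$.

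The third step produces the $\lambda$-homogeneity. By a direct rescaling computation, $N^0(\rho, v_{r, x_0}) = N^{x_0}(r\rho, v)$, and differentiating $H^{x_0}$ together with \eqref{est} shows that $\Phi^{x_0}(r\rho, v) - \bigl(n + a + 2 N^{x_0}(r\rho, v)\bigr) \to 0$ as $r \to 0^+$. Hence in the limit $N^0(\rho, v_{0, x_0}) = \lambda$ for every $\rho > 0$ (the strong $L^2$ convergence at scale $\rho$ handles the denominator, and convergence of the Dirichlet energy follows from the monotonicity formula applied to the blow-up sequence). Combined with the zero-obstacle equation just obtained, the standard Almgren computation then forces $X \cdot \nabla v_{0, x_0} = \lambda v_{0, x_0}$, i.e. homogeneity of degree $\lambda$.

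The main technical obstacle is the fourth step, namely the upgrade from weak $H^1$ convergence to $C^{1, \alpha}_a$ convergence. For this I would invoke uniform interior and boundary regularity estimates for solutions to the thin obstacle problem for $L_a$, as developed in \cite{cs07}, \cite{css08} and adapted to the present $\varphi \not\equiv 0$ setting in \cite{gr17}, applied to the rescalings $v_{r, x_0}$. Combined with the locally uniform vanishing of $h_r$ from Step 2, these estimates yield equicontinuity of $v_{r, x_0}$, $\nabla_x v_{r, x_0}$, and $|y|^a \partial_y v_{r, x_0}$ on compact subsets of $B_R^+$, and Ascoli-Arzelà upgrades the convergence to $C^{1, \alpha}_a$, concluding the proof.
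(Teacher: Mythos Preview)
Your proposal is correct and follows essentially the same route as the paper, which simply defers to Proposition~6.6 of \cite{gr17} and records the technical observation (which you also use in Step~2) that one may take $p\in(0,\gamma)$ small enough so that $\lambda+\varepsilon<k+\gamma-p$, forcing $H(r)\ge r^{n+a+2(k+\gamma-p)}$ for small $r$ and hence making the monotonicity of $\Phi^{x_0}$ coincide with that of $r(1+Cr^p)H'(r)/H(r)$. Your four-step outline (uniform $H^1$ bounds via the frequency, vanishing of the rescaled right-hand side $h_r$, constancy of the limit frequency giving $\lambda$-homogeneity, and the $C^{1,\alpha}_a$ upgrade via the estimates of \cite{css08}) is exactly the content of that cited argument; note only that the denominator in \eqref{rescaling} should carry a square root for the normalization $\|v_{r,x_0}\|_{L^2(\partial B_1,a)}=1$ to hold, as you implicitly assume when setting $\mu_r=(r^{-(n+a)}H^{x_0}(r,v))^{1/2}$.
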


	Notice that the blow-up $v_{0,x_0}$ is not a priori unique. Still, all the blow-ups at $x_0$ have the same homogeneity. Therefore, for $\lambda<k+\gamma$, the following set is well-defined: 
	\be\label{gamma}\Gamma_\lambda(u):=\{x_0\in\Gamma(u):v_{0,x_0} \mbox{ is } \lambda-\mbox{homogeneous}\}.\ee
	Equivalently, $\Gamma_\lambda(u)$ is the set of all points $x_0\in\Gamma(u)$ such that $\Phi^{x_0}(0^+,v)=n+a+2\lambda$, with $\lambda<k+\gamma$.

	\begin{remark}\label{for}
		As in the proof of the last proposition, we get  \be \Phi^{x_0}(r,v)=(1+Cr^p)\left(n+a+2N^{x_0}(r,v)-2\frac{r}{H^{x_0}(r)}\int_{B_{Rr}(x_0)}vh\y\,dX\right),\ee for $r$ small enough. Then, by \eqref{h2} and \eqref{est}, we obtain $$\Phi^{x_0}(0^+,v)=n+a+2N^{x_0}(0^+,v).$$
	\end{remark}
	
	\subsection{Properties of the Weiss' energy $\W$} In the case $\vf\not\equiv0$, we can consider the following Weiss' energy, which is a small modification of the Weiss' energy $W$ for the obstacle $\vf\equiv0$, defined in \eqref{W}. Precisely, we define \bea\W^{x_0}_\lambda(r,v)=W^{x_0}_\lambda(r,v)+\frac{1}{r^{n+a+2\lambda-1}}\int_{B_r(x_0)}vh\y\,dX\eea and we drop the dependence on $x_0$ if $x_0=0$.
	
	\begin{lemma}\label{h'd'} Let $u$ be a solution of \eqref{fract2} and $v=u^{x_0}$ be the solution of \eqref{fract5} with $x_0\in\Gamma(u)$, $\vf\in C^{k,\gamma}(\R^n)$, $k\ge2$ and $\gamma\in(0,1)$. If $$\mathcal{I}(r):=\int_{\partial B_r(x_0)} v \partial_{\nu}v\y\,d\HH^n=\int_{B_r(x_0)} \lvert \nabla v \rvert^2\y\,dX+\int_{B_r(x_0)} vh\y\,dX,$$ then
		\begin{equation}\label{h'}
			H'(r)=\frac{n+a}{r}H(r)+2\mathcal{I}(r),
		\end{equation}
		and
		\begin{equation}\begin{aligned}\label{d'}
				\mathcal{I}'(r)&=\frac{n+a-1}{r}\mathcal{I}(r)-\frac{n+a-1}{r}\int_{B_r(x_0)} vh\y\,dX\\&\qquad+2\int_{\partial B_r(x_0)} (\partial_{\nu}v)^2\y\,d\mathcal{H}^n-2\int_{ B_r(x_0)}h\partial_{\nu}v\y\,dX\\&\qquad+\int_{\partial B_r(x_0)}vh\y\,d\HH^n.
			\end{aligned}	
		\end{equation}
		\begin{proof}
			We give only the idea of the proof; for more details we refer to \cite{gpps17}.
			First we can change the variables in the expression of $H(r)$ and with a standard computation we get \eqref{h'}.
			
			Now, if $X=(x,y)$, then using $$\mbox{div}\left(\y\frac{\lvert\nabla v\rvert^2}{2}X-\y \nabla v(X\cdot \nabla v)\right)=\frac{n+a-1}{2}\y\lvert\nabla v\rvert^2-\y h(X\cdot \nabla v)$$ and an integration by parts, we get \eqref{d'}.
		\end{proof}		
	\end{lemma}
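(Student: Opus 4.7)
The plan is to translate so that $x_0=0$ and then to derive both formulas from the weighted divergence theorem applied to carefully chosen vector fields, together with the Caffarelli--Silvestre equation $-L_av=h$. This PDE holds classically off the coincidence set $\Lambda\cap\{y=0\}$ and globally in the sense of distributions up to a nonnegative measure concentrated on $\{v=0\}\cap\{y=0\}$; since $v$ vanishes on the support of that measure, pairings of the form $v\cdot L_av$ reduce to their classical counterparts.

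First I will verify the alternative expression for $\mathcal{I}(r)$ by applying the divergence theorem to the vector field $v\y\nabla v$ on $B_r$, expanding $\mathrm{div}(v\y\nabla v)=\y|\nabla v|^2+v\y L_av$ and invoking the PDE. Next, for \eqref{h'}, I will rescale onto the unit sphere via $X=r\Theta$, $\Theta\in\partial B_1$, obtaining
\bea H(r)=r^{n+a}\int_{\partial B_1}v(r\Theta)^2|y_\Theta|^a\,d\HH^n,\eea
and differentiate in $r$: the prefactor $r^{n+a}$ supplies $\frac{n+a}{r}H(r)$, while the chain-rule term supplies a boundary integral that rescales back to $2\int_{\partial B_r}v\y\partial_\nu v\,d\HH^n=2\mathcal{I}(r)$, exactly as in \eqref{h'}.

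For \eqref{d'}, I will first verify the Rellich--Pohozaev identity
\bea\mathrm{div}\!\left(\y\frac{|\nabla v|^2}{2}X-\y\nabla v(X\cdot\nabla v)\right)=\frac{n+a-1}{2}\y|\nabla v|^2-\y h(X\cdot\nabla v)\eea
by a direct product-rule computation using $\mathrm{div}(X)=n+1$, $X\cdot\nabla\y=a\y$, $\mathrm{div}(\y\nabla v)=\y L_av$, and the elementary identity $\nabla v\cdot\nabla(X\cdot\nabla v)=|\nabla v|^2+X\cdot\nabla(\frac{|\nabla v|^2}{2})$. Integrating over $B_r$ and using $X\cdot\nu=r$, $X\cdot\nabla v=r\partial_\nu v$ on $\partial B_r$, the divergence theorem produces a Pohozaev relation expressing $\int_{\partial B_r}|\nabla v|^2\y\,d\HH^n$ in terms of $\int_{\partial B_r}(\partial_\nu v)^2\y\,d\HH^n$, $\int_{B_r}|\nabla v|^2\y\,dX$, and an $h$-weighted bulk term. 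In parallel, the coarea formula applied to the expression for $\mathcal{I}(r)$ just derived gives
\bea\mathcal{I}'(r)=\int_{\partial B_r}|\nabla v|^2\y\,d\HH^n+\int_{\partial B_r}vh\y\,d\HH^n.\eea
Substituting the Pohozaev relation for $\int_{\partial B_r}|\nabla v|^2\y$ into this and then replacing $\int_{B_r}|\nabla v|^2\y\,dX$ by $\mathcal{I}(r)-\int_{B_r}vh\y\,dX$ (from the first step) yields \eqref{d'} after regrouping the five resulting terms.

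The main obstacle is justifying the integration by parts across the coincidence set, where $v$ is only Lipschitz and $\mathrm{div}(\y\nabla v)$ carries a nonnegative singular measure on $\{v=0\}\cap\{y=0\}$; since $v$ itself vanishes on that support, the computations involving $v$ as a factor are insensitive to the singular part and the identities reduce to the smooth case (and can alternatively be justified by an approximation argument on $\{|y|>\delta\}$, sending $\delta\to0$). The remaining effort is careful sign and factor bookkeeping, especially in the conversions between $X\cdot\nabla v$, $|X|\partial_\nu v$, and $r\partial_\nu v$ inside the various $h$-terms.
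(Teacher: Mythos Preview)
Your proposal is correct and follows essentially the same route as the paper: the change of variables $X=r\Theta$ for \eqref{h'} and the Rellich--Pohozaev identity $\mathrm{div}\big(\y\frac{|\nabla v|^2}{2}X-\y\nabla v(X\cdot\nabla v)\big)=\frac{n+a-1}{2}\y|\nabla v|^2-\y h(X\cdot\nabla v)$ combined with the coarea formula for \eqref{d'}. Your added discussion of why the singular part of $L_a v$ on $\{v=0\}\cap\{y=0\}$ does not contribute (because the multiplying factors $v$ and $X\cdot\nabla v$ vanish there) is a welcome clarification that the paper omits.
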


	We next show that also the Weiss' energy $\W$ satisfies a monotonicity formula.
	\begin{proposition}[Monotonicity of the Weiss' energy $\W$] Let $u$ be a solution of \eqref{fract2} and $v=u^{x_0}$ be the solution of \eqref{fract5} with $x_0\in\Gamma(u)$, $\vf\in C^{k,\gamma}(\R^n)$, $k\ge2$ and $\gamma\in(0,1)$. If $\lambda<k+\gamma$, then there exists contants $C,r_0>0$ such that for all $x_0\in\Gamma(u)$ and for all $r\in(0,r_0)$, it holds \be\label{monoW}\frac{d}{dr}\left(\W^{x_0}_\lambda(r,v)+Cr^{k+\gamma-\lambda}\right)\ge\frac{2}{r^{n+a+2\lambda+1}}\int_{\partial B_r(x_0)} (\nabla v\cdot x -\lambda v)^2\y\,d\mathcal{H}^n,\ee i.e. the function $r\mapsto\W^{x_0}_\lambda(r,v)+Cr^{k+\gamma-\lambda}$ is increasing for all $r\in(0,r_0)$.
		\begin{proof}
			The original proof is done in \cite{gpps17}, Proposition 3.5, in the case $\lambda=1+s$ and with a different exponent for $r$ in left-hand side. We generalize this result to any $\lambda$ and to any $\vf\in C^{k,\gamma}(\R^n)$.
			
			Since we have
			\bea \frac{d}{dr}\W^{x_0}_\lambda(r,v)&=-\frac{n+a+2\lambda-1}{r^{n+a+2\lambda}}\mathcal{I}(r)+\frac{\mathcal{I}'(r)}{r^{n+a+2\lambda-1}}+\lambda\frac{n+a+2\lambda}{r^{n+a+2\lambda+1}}H(r)\\&\qquad-\frac{\lambda}{r^{n+a+2\lambda}}H'(r),\eea by \eqref{h'} and \eqref{d'}, we obtain \bea\frac{d}{dr}\W^{x_0}_\lambda(r,v)&=\frac{2}{r^{n+a+2\lambda+1}}\int_{\partial B_r}(\nabla v\cdot x -\lambda v)^2\y\,d\mathcal{H}^n\\&-\frac{n+a-1}{r^{n+a+2\lambda}}\int_{B_r(x_0)} vh\y\,dX-\frac{2}{r^{n+a+2\lambda-1}}\int_{ B_r(x_0)}h\partial_{\nu}v\y\,dX\\&+\frac{1}{r^{n+a+2\lambda-1}}\int_{\partial B_r(x_0)}vh\y\,d\HH^n.\eea Now, we can use \eqref{est} to get \begin{equation*}\begin{aligned} & \left\lvert \frac{n+a-1}{r^{n+a+2\lambda}}\int_{B_r(x_0)} vh\y\,dX\right\rvert+\left\lvert\frac{2}{r^{n+a+2\lambda-1}}\int_{ B_r(x_0)}h\partial_{\nu}v\y\,dX\right\rvert\\&=\left\lvert\frac{1}{r^{n+a+2\lambda-1}}\int_{\partial B_r(x_0)}vh\y\,d\HH^n\right\rvert\le C r^{k+\gamma-\lambda-1} =C \frac{d}{dr}r^{k+\gamma-\lambda},
				\end{aligned}	
			\end{equation*} which proves the claim.
		\end{proof}	
	\end{proposition}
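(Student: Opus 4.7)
My plan is to differentiate $\W^{x_0}_\lambda(r,v)$ directly, then substitute the first variation identities \eqref{h'} and \eqref{d'} from Lemma \ref{h'd'}. The $h$-free portion of the derivative should rearrange algebraically into the standard Pohozaev/Weiss nonnegative square on $\partial B_r(x_0)$, while the $h$-dependent portion yields a remainder that I control using the growth estimate \eqref{est} together with the pointwise bound \eqref{estimate} for $h$.

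First I would rewrite $\W^{x_0}_\lambda(r,v) = r^{-(n+a+2\lambda-1)}\mathcal{I}(r) - \lambda r^{-(n+a+2\lambda)}H(r)$, using the definition of $\mathcal{I}(r)$ which already absorbs the Dirichlet integral together with the bulk source term $\int_{B_r} vh\y\,dX$. Differentiating and inserting \eqref{h'} and \eqref{d'}, the $h$-free terms combine via the elementary identity $(\nabla v\cdot x - \lambda v)^2 = r^2(\partial_\nu v)^2 - 2\lambda r\, v\,\partial_\nu v + \lambda^2 v^2$ on $\partial B_r(x_0)$ into
$$\frac{2}{r^{n+a+2\lambda+1}}\int_{\partial B_r(x_0)}(\nabla v\cdot x - \lambda v)^2\y\,d\HH^n,$$
which is manifestly nonnegative and is the desired Pohozaev square.

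The residual $h$-terms read
$$-\frac{n+a-1}{r^{n+a+2\lambda}}\int_{B_r(x_0)} vh\y\,dX \;-\; \frac{2}{r^{n+a+2\lambda-1}}\int_{B_r(x_0)} h\,\partial_\nu v\,\y\,dX \;+\; \frac{1}{r^{n+a+2\lambda-1}}\int_{\partial B_r(x_0)} vh\,\y\,d\HH^n,$$
and I aim to bound each of them in absolute value by $Cr^{k+\gamma-\lambda-1}$. The first term follows directly from \eqref{est}. For the boundary term, I would rewrite it via the coarea formula as the $r$-derivative of the volume integral $\int_{B_r(x_0)} vh\y\,dX$ and again invoke \eqref{est} together with the pointwise bound $|h|\le C|x-x_0|^{k+\gamma-2}$ from \eqref{estimate}. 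Since $k+\gamma-\lambda>0$ by hypothesis, $Cr^{k+\gamma-\lambda-1}$ is (up to a constant) the derivative of $r^{k+\gamma-\lambda}$, so absorbing the remainder into a monotone correction of the form $Cr^{k+\gamma-\lambda}$ yields \eqref{monoW}.

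The main obstacle I anticipate is the middle term $\int_{B_r(x_0)} h\,\partial_\nu v\,\y\,dX$, which features the radial derivative of $v$ rather than $v$ itself, so \eqref{est} is not directly applicable. My approach is to apply Cauchy--Schwarz in weighted $L^2$: the factor $\int_{B_r} h^2\y\,dX$ scales as $r^{n+a+1+2(k+\gamma-2)}$ by the pointwise bound on $h$, while $\int_{B_r}|\nabla v|^2\y\,dX$ is controlled through the Almgren frequency by $r\cdot H(r)/r\sim r^{n+a+2\lambda}$ thanks to \eqref{h1} (modulo lower-order $h$-corrections already handled by \eqref{est}). Multiplying the two square-roots and dividing by $r^{n+a+2\lambda-1}$ gives exactly the required $r^{k+\gamma-\lambda-1}$ decay, closing the estimate and proving the proposition.
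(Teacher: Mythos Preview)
Your proposal is correct and follows essentially the same route as the paper: rewrite $\W^{x_0}_\lambda(r,v)=r^{-(n+a+2\lambda-1)}\mathcal{I}(r)-\lambda r^{-(n+a+2\lambda)}H(r)$, differentiate, insert \eqref{h'} and \eqref{d'}, and collect the $h$-free part into the Pohozaev square $\frac{2}{r^{n+a+2\lambda+1}}\int_{\partial B_r}(\nabla v\cdot x-\lambda v)^2\y\,d\HH^n$, leaving exactly the three $h$-dependent remainder terms you list.

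The only difference is in the treatment of the remainders. The paper simply cites \eqref{est} for all three, leaving the reader to adapt the argument; you actually spell out how to handle each one. Your Cauchy--Schwarz approach for the middle term $\int_{B_r}h\,\partial_\nu v\,\y$ (pairing $\|h\|_{L^2(B_r,a)}\lesssim r^{(n+a+1)/2+k+\gamma-2}$ from \eqref{estimate} with $\|\nabla v\|_{L^2(B_r,a)}\lesssim r^{(n+a+2\lambda-1)/2}$ via \eqref{h1} and the boundedness of the frequency) is exactly the right way to fill this gap and gives the required $r^{k+\gamma-\lambda-1}$. One small comment: for the boundary term $\int_{\partial B_r}vh\,\y$, the coarea identity alone does not help, since a bound on $\int_{B_r}vh\,\y$ does not bound its $r$-derivative; what actually works is the direct estimate you hint at, namely $|h|\le Cr^{k+\gamma-2}$ on $\partial B_r$ combined with Cauchy--Schwarz and $H(r)\le Cr^{n+a+2\lambda}$ from \eqref{h1}, i.e.\ rerunning the proof of \eqref{est} on the sphere rather than the ball.
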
 
	
	\begin{proposition}Let $u$ be a solution of \eqref{fract2} and $v=u^{x_0}$ be the solution of \eqref{fract5} with $x_0\in\Gamma(u)$, $\vf\in C^{k,\gamma}(\R^n)$, $k\ge2$ and $\gamma\in(0,1)$. If $\lambda<k+\gamma$ and $x_0\in\Gamma_{\lambda}(u)$, then \be\label{van}\W^{x_0}_\lambda(0^+,v)=0.\ee In particular, using \eqref{monoW}, we get \be\label{ric}\W^{x_0}_\lambda(r,v)\ge -Cr^{k+\gamma-\lambda}\ee for all $r\in(0,r_0)$.
		\begin{proof}
			We can write $$\W^{x_0}(r,v)=\frac{H^{x_0}(r)}{r^{n+a+2\lambda}}\left(r\frac{\mathcal{I}^{x_0}(r)}{H^{x_0}(r)}-\lambda\right).$$ 
			Hence, using \eqref{h2} and Remark \ref{for}, we get \bea \lim_{ r\to0^+}r\frac{\mathcal{I}^{x_0}(r,v)}{H^{x_0}(r)}&=\lim_{ r\to0^+}\left(N^{x_0}(r)-\frac{r}{H^{x_0}(r)}\int_{B_{r}(x_0)}vh\y\,dX\right)\\&=N^{x_0}(0^+,v)
			=\frac{\Phi^{x_0}(0^+,v)-(n+a)}2 =\lambda,\eea 
			which concludes the proof.
		\end{proof}
	\end{proposition}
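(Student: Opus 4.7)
The plan is to first rewrite $\widetilde{W}^{x_0}_\lambda(r,v)$ in a form that exposes the Almgren-type quotient and then identify its limit via the information already recorded in Remark \ref{for} and Lemma \ref{h1h2}. Concretely, from the definition of $\mathcal{I}(r)$ in Lemma \ref{h'd'} we have
\begin{equation*}
\int_{B_r(x_0)}|\nabla v|^2|y|^a\,dX = \mathcal{I}^{x_0}(r) - \int_{B_r(x_0)} v h\,|y|^a\,dX,
\end{equation*}
so that after collecting terms
\begin{equation*}
\widetilde{W}^{x_0}_\lambda(r,v) = \frac{\mathcal{I}^{x_0}(r)}{r^{n+a+2\lambda-1}} - \frac{\lambda\, H^{x_0}(r)}{r^{n+a+2\lambda}} = \frac{H^{x_0}(r)}{r^{n+a+2\lambda}}\left(\frac{r\,\mathcal{I}^{x_0}(r)}{H^{x_0}(r)} - \lambda\right),
\end{equation*}
which is the identity used in the sketch.

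Next I would show $\lim_{r\to 0^+} r\mathcal{I}^{x_0}(r)/H^{x_0}(r) = \lambda$. Writing $N^{x_0}(r,v) = r\int_{B_r}|\nabla v|^2|y|^a\,dX / H^{x_0}(r)$ and using the definition of $\mathcal{I}$,
\begin{equation*}
\frac{r\,\mathcal{I}^{x_0}(r)}{H^{x_0}(r)} = N^{x_0}(r,v) + \frac{r}{H^{x_0}(r)}\int_{B_r(x_0)} v h\,|y|^a\,dX.
\end{equation*}
The remainder term is controlled by \eqref{est} in the numerator and by \eqref{h2} in the denominator: for every $\eps>0$,
\begin{equation*}
\frac{r}{H^{x_0}(r)}\left|\int_{B_r(x_0)} v h\,|y|^a\,dX\right| \leq \frac{C\, r \cdot r^{n+a+\lambda+k+\gamma-1}}{r^{n+a+2(\lambda+\eps)}} = C\, r^{k+\gamma-\lambda-2\eps},
\end{equation*}
which tends to $0$ as $r\to 0^+$ provided $\eps$ is chosen small enough (recall $\lambda < k+\gamma$). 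Combined with Remark \ref{for}, which gives $N^{x_0}(0^+,v) = (\Phi^{x_0}(0^+,v)-(n+a))/2 = \lambda$, this yields the claimed limit.

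To finish \eqref{van}, bound the prefactor: by the upper estimate \eqref{h1} in Lemma \ref{h1h2}, $H^{x_0}(r)/r^{n+a+2\lambda} \leq C$ for all small $r$, so the product tends to $0$ as $r\to 0^+$. For \eqref{ric}, I would integrate the monotonicity formula \eqref{monoW}: the function $r\mapsto \widetilde{W}^{x_0}_\lambda(r,v) + C r^{k+\gamma-\lambda}$ is monotone increasing on $(0,r_0)$, and its limit at $0^+$ equals $\widetilde{W}^{x_0}_\lambda(0^+,v) = 0$ because $k+\gamma-\lambda>0$. Hence $\widetilde{W}^{x_0}_\lambda(r,v) + Cr^{k+\gamma-\lambda} \geq 0$ on $(0,r_0)$, which is exactly \eqref{ric}.

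The only subtle step is verifying that the $vh$ contribution to $\mathcal{I}$ is genuinely negligible in the limit; this is where one has to match the exponent from Lemma \ref{h1h2} against the exponent from \eqref{est} and use the slack $\lambda<k+\gamma$ to absorb the arbitrarily small $\eps$. Everything else is bookkeeping with the formulas for $H'$ and $\mathcal{I}'$ already established.
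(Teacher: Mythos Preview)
Your proof is correct and follows essentially the same approach as the paper: rewrite $\widetilde W^{x_0}_\lambda$ via the identity involving $\mathcal{I}/H$, use \eqref{est} together with \eqref{h2} to kill the $vh$ term, and invoke Remark~\ref{for} to identify the limit of the frequency quotient. If anything, your version is slightly more explicit than the paper's sketch, since you spell out that the prefactor $H^{x_0}(r)/r^{n+a+2\lambda}$ is bounded by \eqref{h1} and you give the full argument for \eqref{ric} from the monotonicity \eqref{monoW}.
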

	
	\subsection{Homogeneous rescalings and homogeneous blow-up}
	The sequence \eqref{rescaling} has good rescaling properties with respect to the Almgren's frequency function.
	Now we consider another sequence of rescalings, the homogeneous rescalings, which have good rescaling properties with respect to the Weiss' energy.
	\begin{proposition}\label{prop2} Let $u$ be a solution of \eqref{fract2} and $v=u^{x_0}$ be the solution of \eqref{fract5} with $x_0\in\Gamma(u)$, $\vf\in C^{k,\gamma}(\R^n)$, $k\ge2$ and $\gamma\in(0,1)$. Suppose that $\Phi^{x_0}(0^+,v)=n+a+2\lambda$ with $\lambda<k+\gamma$. Let
	$v^{(\lambda)}_{r,x_0}$ be the homogeneous rescalings of $v$ at $x_0\in\Gamma(u)$, defined as \be \label{hom}v^{(\lambda)}_{r,x_0}(x,y):=\frac{v(x_0+rx,ry)}{r^\lambda}.\ee Then, up to a subsequence, the homogeneous rescalings converge in $C^{1,\alpha}_a(B_R^+)$ for all $R>0$ (defined in \eqref{c1a}), as $r\to0^+$, to a  blow-up $v^{(\lambda)}_{0,x_0}$, which is $\lambda-$homogeneous and is a solution to \eqref{fract4} (with 0 obstacle).
		\begin{proof}		
			By the Poincaré inequality \eqref{poinc}, in order to show the boundness of $v_r$ in $H^1(B_R,a)$, it is sufficient to prove the boundness of $v_r$ in $L^2(\partial B_R,a)$ and the boundness of $\lvert \nabla v_r\rvert $ in $L^2(B_R,a)$. The first bound follows by \eqref{h1}. In fact 
			$$\lVert v_r\rVert^2_{L^2(\partial B_R,a)}=H(R,v_r)=\frac{H^{x_0}(Rr,v)}{r^{n+a+2\lambda}}\le C(R).$$ 
			Also the second bound follows from \eqref{h1}:
			\bea\int_{B_R}\lvert \nabla v_r\rvert^2 \y\,dX&=\frac{1}{r^{n+a+2\lambda-1}}\int_{B_{Rr}(x_0)}\lvert \nabla v \rvert^2\y\,dX\\&\le C(R) \frac{Rr}{H^{x_0}(Rr)}\int_{B_{Rr}(x_0)}\lvert \nabla v\rvert^2 \y\,dX\\&=C(R)\Biggl( N^{x_0}(Rr)-\frac{Rr}{H^{x_0}(Rr)}\int_{B_{Rr}(x_0)}vh\y\,dX\\&\qquad+\frac{Rr}{H^{x_0}(Rr)}\int_{B_{Rr}(x_0)}vh\y\,dX\Biggl)\le C(R),
			\eea
			
			where in the last inequality we used  \eqref{h2}, \eqref{est} and the monotonicity of the function $r\mapsto\Phi^{x_0}(r,v)$. Thus, for $r$ small enough, $$r\frac{\mathcal{I}(r)}{H(r)}\le C, $$ as in the proof of Proposition \ref{prop}.
			
			Hence, up to subsequences, $v_r$ converges to some $v_0$ weakly in $H^1(B_R,a)$ and
			\be \label{la}\begin{aligned} L_a(v_r(x,y))&=\frac{r^{2}}{r^\lambda}L_av(x_0+rx,ry)\le\frac{r^{2}}{r^\lambda}r^{k+\gamma-2}\lvert x\rvert^{k+\gamma-2}\\&=r^{k+\gamma-\lambda}\lvert x\rvert^{k+\gamma-2}
			\end{aligned}
			\ee by \eqref{est}, with an equality in $\R^{n+1}\setminus(\{v_r=0\}\cap\{y=0\})$. Therefore, we can use the estimate in \cite{css08} (Proposition 4.3 and Lemma 4.4) to get the convergence in $C^{1,\alpha}_a$, since $v_r$ are solutions of \eqref{fract5}, where the right-hand sides in the third and the fourth lines are as in \eqref{la}.
			Moreover, $v_0$ is a solution of \eqref{fract4} (with 0 obstacle), since we can send $r\to0^+$ in \eqref{la}.
			
			Finally, we show that $v_0$ is homogeneous. Indeed, by \eqref{monoW}, we have that for all $0<R_1<R_2<r_0$ \begingroup\allowdisplaybreaks\begin{align*} W_\lambda(R_2r_k,v)&+C(R_2r_k)^{k+\gamma-\lambda}-W_\lambda(R_1r_k,v)-C(R_1r_k)^{k+\gamma-\lambda}\\&\ge \int_{R_1r_k}^{R_2r_k}\frac2{r^{n+a+2\lambda+1}}\int_{\partial B_r(x_0)}(\nabla v\cdot x -\lambda v)^2\y\,d\mathcal{H}^n\,dr\\&=\int_{R_1}^{R_2}\frac2{r^{n+a+2\lambda+1}r_k^{n+a+2\lambda}}\int_{\partial B_{rr_k}(x_0)}(\nabla v\cdot x -\lambda v)^2\y\,d\mathcal{H}^n\,dr\\&=\int_{R_1}^{R_2}\frac2{r^{n+a+2\lambda+1}}\int_{\partial B_{r}}(\nabla v_r\cdot x -\lambda v_r)^2\y\,d\mathcal{H}^n\,dr,
			\end{align*}
			\endgroup where sending $r_k\to0^+$ the left-hand side vanishes, by \eqref{van}. Thus, by arbitrariness of $R_1$ and $R_2$, we obtain that $v_0$ is homogeneous, since $\nabla v_0\cdot x=\lambda v_0$ on $\partial B_r$ for all $r\in(0,r_0)$.
		\end{proof} 
	\end{proposition}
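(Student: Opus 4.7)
The plan is a standard blow-up scheme in four steps: (i) uniform $H^1(B_R,a)$ bounds for the rescalings $v_r := v^{(\lambda)}_{r,x_0}$, (ii) upgrade of the compactness to $C^{1,\alpha}_a$, (iii) identification of the limit as a solution of \eqref{fract4}, and (iv) $\lambda$-homogeneity via the Weiss monotonicity \eqref{monoW}. I expect step (iv) to be the pivotal one; the main technical nuisance throughout is tracking several error terms generated by the $C^{k,\gamma}$ obstacle, which are tamed only because $\lambda<k+\gamma$.

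For step (i), a weighted Poincar\'e inequality reduces the $H^1$ bound to bounds on the boundary trace and on the gradient. Scaling gives $\|v_r\|^2_{L^2(\partial B_R,a)} = H^{x_0}(Rr,v)/r^{n+a+2\lambda}$, which is bounded by \eqref{h1}. For the gradient,
\[\int_{B_R}|\nabla v_r|^2|y|^a\,dX = \frac{1}{r^{n+a+2\lambda-1}}\int_{B_{Rr}(x_0)}|\nabla v|^2|y|^a\,dX,\]
which I would bound by writing it as a product of $Rr\,\mathcal{I}^{x_0}(Rr)/H^{x_0}(Rr)$ and $H^{x_0}(Rr)/(Rr)^{n+a+2\lambda}$. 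The second factor is controlled by \eqref{h1}; for the first I would use the definition of $\mathcal{I}$, the $vh$-estimate \eqref{est}, the lower bound \eqref{h2}, and monotonicity of $\Phi^{x_0}$ from Proposition \ref{mono} to conclude that $r\mathcal{I}(r)/H(r)$ stays bounded for small $r$.

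For steps (ii) and (iii), observe that $v_r$ solves \eqref{fract5} with right-hand side $r^{2-\lambda}h(x_0+rx,ry)$. By \eqref{estimate} this is pointwise bounded by $Cr^{k+\gamma-\lambda}|x|^{k+\gamma-2}$ which, since $k\ge 2$ and $\lambda<k+\gamma$, is uniformly bounded on $B_R$ and tends uniformly to $0$ as $r\to 0^+$. The $C^{1,\alpha}_a$ a priori estimates for the $L_a$-obstacle problem with bounded forcing (Proposition 4.3 and Lemma 4.4 in \cite{css08}) then upgrade the weak $H^1$-convergence to $C^{1,\alpha}_a(B_R^+)$ convergence along a subsequence. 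Passing to the limit in the equation and in the unilateral constraint identifies $v^{(\lambda)}_{0,x_0}$ as a solution of \eqref{fract4}.

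Step (iv), the homogeneity, is the crux and is where the Weiss formalism really earns its keep. Applying \eqref{monoW} on the interval $[R_1 r_k, R_2 r_k]$ with $0<R_1<R_2$ and changing variables inside the integrals to re-express them in terms of $v_{r_k}$ yields
\[\W^{x_0}_\lambda(R_2 r_k,v)-\W^{x_0}_\lambda(R_1 r_k,v)+C\bigl[(R_2 r_k)^{k+\gamma-\lambda}-(R_1 r_k)^{k+\gamma-\lambda}\bigr] \ge \int_{R_1}^{R_2}\frac{2}{r^{n+a+2\lambda+1}}\int_{\partial B_r}(\nabla v_{r_k}\cdot x-\lambda v_{r_k})^2|y|^a\,d\mathcal{H}^n\,dr.\]
The left-hand side vanishes as $r_k\to 0^+$ by \eqref{van} and $k+\gamma-\lambda>0$, while the $C^{1,\alpha}_a$ convergence from the previous step lets me pass to the limit on the right. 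Arbitrariness of $R_1<R_2$ then forces $\nabla v^{(\lambda)}_{0,x_0}\cdot x=\lambda v^{(\lambda)}_{0,x_0}$ on every sphere in $(0,r_0)$, i.e.\ the $\lambda$-homogeneity of the blow-up.
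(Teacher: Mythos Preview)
Your proposal is correct and follows essentially the same approach as the paper's own proof: both reduce the $H^1$-bound to boundary and gradient bounds via the weighted Poincar\'e inequality, use \eqref{h1}, \eqref{h2}, \eqref{est} and the monotonicity of $\Phi^{x_0}$ for those bounds, invoke the $C^{1,\alpha}_a$ estimates from \cite{css08} (Proposition~4.3 and Lemma~4.4) to upgrade compactness and identify the limit, and finally obtain $\lambda$-homogeneity by integrating the Weiss monotonicity formula \eqref{monoW} over $[R_1 r_k,R_2 r_k]$ and letting $r_k\to 0^+$ with \eqref{van}.
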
 
	
	\subsection{The operator $L_a^S$}
	The strategy to prove the epiperimetric inequalities is to decompose a trace $c\in H^1(\partial B_1,a)$ in terms of the eigenfunctions of the operator $L_a$ restricted to $\partial B_1$. The restriction $L_a^S$ is defined for any function $\phi\in H^1(\partial B_1,a)$ as \be\label{las}L_a^S\phi=\lvert y\rvert^{-a}\mbox{div}(\y\nabla \Phi)|_{\lvert x \rvert=1},\ee where $\Phi(x)=\phi\left(\frac{x}{\lvert x \rvert}\right)$.
	\begin{remark}\label{spherical}
		Since in spherical coordinates we have \be\label{sphh}\mbox{div}(\y\nabla u)= \y\frac{\partial^2 u}{\partial r^2}+\frac{1}{r}(n+a)\y\frac{\partial u}{\partial r}+\frac{1}{r^2}\y L_a^S u,\ee then  $$L_a^S(r^\alpha\phi(\theta))=0$$ if and only if $$-L_a^S\phi(\theta)=\lambda^a(\alpha)\phi(\theta),$$ where $\lambda^a(\alpha)=\alpha(\alpha+n+a-1)$. 
		
		By Liouville Theorem \ref{liouville}, if we suppose that $\phi$ is even in the $y$ direction, then $r^\alpha \phi(\theta)$ is $L_a-$harmonic if and only if $r^\alpha \phi(\theta)$ is a polynomial $L_a-$harmonic with $\alpha\in\N$.

		Using the theory of compact operators, we can prove that there exist an increasing sequence of eigenvalues
		$\{\lambda^a_k\}_{k\in\mathbb{N}}\subset \mathbb{R}_{\ge0}$ and a sequence of eigenfunctions $\{\phi_k\}_{k\in\mathbb{N}}\subset H^1(\partial B_1,a)$ normalized in $L^2(\partial B_1,a)$, such that $$-L_a^S\phi_k=\lambda^a_k \phi_k,$$ with $\{\phi_k\}_{k\in\mathbb{N}}$ orthonormal basis of $H^1(\partial B_1,a)$.
		
		The (normalized) eigenspace corresponding to eigenvalue $\lambda^a$ is $$E(\lambda^a)=\{\phi\in H^1(\partial B_1,a): -L_a^S \phi=\lambda^a \phi, \ \lVert \phi\rVert_{L^2({\partial B_1,a})}=1\}.$$ for all $\lambda^a\subset\{\lambda_k^a\}_{k\in\N}$.
		
		We denote by $\alpha_k\in\N$ the grade of the polynomial that corresponds to the eigenvalue $\lambda^a_k$, i.e. the only natural number such that $\lambda^a(\alpha_k)=\lambda^a_k$.
		
		In particular $ \lambda^a_1=\lambda^a(0)=0$ and $E(\lambda^a_1)$ is the space of constant functions, while
		$ \lambda^a_2=\ldots=\lambda^a_{n+2}=\lambda^a(1)=n+a$ and $E(\lambda^a_2)$ is the space of linear functions. Finally $\lambda^a_k \ge\lambda^a(2)$ for $k\ge n+3$.
	\end{remark}
	\subsection{The Weiss' energy $W$ and eigenfunctions of $-L_a^s$} The following lemma is a generalization of Lemma 2.3 and Lemma 2.4 in \cite{csv17}, for the Weiss' energy $W$ with weight. It will be used in several proof later.
	\begin{lemma} \label{spherical1} Let $\phi\in H^1(\partial B_1,a)$ with $$\phi(\theta)=\sum_{k=1}^{\infty} c_k \phi_k(\theta)\in H^1(\partial B_1,a),$$ where $\phi_k$ normalized eigenfunctions of $-L_a^S$ as above, and let $r^{\mu}\phi(\theta)$ be the $\mu-$homogeneous extension, then 
		\begin{equation}\label{prima}
			W_\mu(r^\mu \phi)=\frac{1}{n+a+2\mu-1}\sum_{k=1}^{\infty}\left(\lambda_k^a-\lambda^a(\mu)\right)c_k^2.
		\end{equation}
		Moreover, if \be\label{kappa}\kappa_{\alpha,\mu}^a=\frac{\alpha-\mu}{\alpha+\mu+n+a-1},\ee then \be\label{seconda} W_\mu(r^\alpha\phi)-(1\mp\kappa_{\alpha,\mu}^a)W_\mu(r^\mu\phi)=\frac{\pm\kappa_{\alpha,\mu}^a}{n+a+2\alpha-1}\sum_{k=1}^{\infty}(\lambda^a(\alpha)-\lambda^a_k)c_k^2.\ee
		Finally, if $c\in H^1(\partial B_1)$ such that $r^{\mu+t}c$ is a solution of \eqref{fract4} (with 0 obstacle), then 
		\begin{equation}\label{terza}
			W_\mu(r^{\mu+t}c)=t\norm c_{L^2(\partial B_1,a)}^2
		\end{equation}
		and
		\begin{equation}\label{quarta}
			W_\mu(r^{\mu}c)=\left(1+\frac{t}{n+a+2\mu-1}\right)W_\mu(r^{\mu+t}c).
		\end{equation}
		\begin{proof} The proof is very similar to the one in \cite{csv17}, but we briefly recall it for the sake of completeness.
			
			By \eqref{w1} we get \bea W_\mu(r^\alpha \phi)=\sum_{k=1}^\infty c_k^2\left(\frac{\lambda_k^a+\alpha^2}{n+a+2\alpha-1}-\mu\right),\eea from where \eqref{prima} and \eqref{seconda} follow.

			Now if $r^{\mu+t}c$ is a solution, then $W_{\mu+t}(r^{\mu+t}c)=0$, by \eqref{w4}. Therefore \eqref{terza} holds.
			
			Finally, the proof of \eqref{quarta} follows by \eqref{w2}, \eqref{w3} and \eqref{terza}.
		\end{proof}
	\end{lemma}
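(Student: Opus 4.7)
My plan is to reduce everything to one explicit computation of $W_\mu(r^\alpha\phi)$ in spherical coordinates and then to chase algebra. Writing $X=r\theta$ with $\theta\in\partial B_1$, splitting $dX=r^n\,dr\,d\mathcal{H}^n(\theta)$ and $\y=r^a|y_\theta|^a$, and decomposing $|\nabla(r^\alpha\phi)|^2=\alpha^2 r^{2\alpha-2}\phi^2+r^{2\alpha-2}|\nabla_\theta\phi|^2$ on the sphere, the radial integration produces a factor $1/(n+a+2\alpha-1)$. Expanding $\phi=\sum_k c_k\phi_k$ in the $L^2(\partial B_1,a)$-orthonormal basis of eigenfunctions of $-L_a^S$ and using $\int_{\partial B_1}|\nabla_\theta\phi_k|^2\y\,d\mathcal{H}^n=\lambda_k^a$ (integration by parts on $\partial B_1$ together with the eigenvalue equation), one recovers the master formula \eqref{w1}. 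Setting $\alpha=\mu$ and rearranging the constant via $\lambda^a(\mu)=\mu^2+\mu(n+a-1)$ gives \eqref{prima} at once.

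For \eqref{seconda}, I would substitute \eqref{w1} for both $W_\mu(r^\alpha\phi)$ and $W_\mu(r^\mu\phi)$ into the left-hand side. The factorisation $\lambda^a(\alpha)-\lambda^a(\mu)=(\alpha-\mu)(\alpha+\mu+n+a-1)$ is precisely what the definition $\kappa_{\alpha,\mu}^a=(\alpha-\mu)/(\alpha+\mu+n+a-1)$ is engineered for, so after clearing the two denominators $n+a+2\alpha-1$ and $n+a+2\mu-1$ the cross terms collapse and both signs $\mp$ work out in parallel. This is routine algebra; no new idea is needed.

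For \eqref{terza} and \eqref{quarta} I would combine two auxiliary facts, namely the paper's \eqref{w2}--\eqref{w4}: the elementary linearity $W_\mu(u)-W_{\mu+t}(u)=t\int_{\partial B_1}u^2\y\,d\mathcal{H}^n$, immediate from the definition of $W_\lambda$, and the Pohozaev-type vanishing $W_{\mu+t}(r^{\mu+t}c)=0$ whenever $r^{\mu+t}c$ solves the zero-obstacle problem \eqref{fract4} (obtained by testing $L_a v$ against $\nabla v\cdot X=(\mu+t)v$ and using that $vL_a v\equiv 0$ for obstacle-problem solutions with zero obstacle). Applied to $u=r^{\mu+t}c$ these two identities yield \eqref{terza} at once. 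The only conceptual step, which I expect to be the subtle heart of \eqref{quarta}, is that \eqref{terza} forces the Pohozaev-type constraint $\int_{\partial B_1}|\nabla_\theta c|^2\y\,d\mathcal{H}^n=\lambda^a(\mu+t)\norm{c}\ab^2$, even though $c$ is not assumed to be a single spherical eigenfunction; plugging this into the spherical expression for $W_\mu(r^\mu c)$ and comparing with $W_\mu(r^{\mu+t}c)=t\norm{c}\ab^2$ collapses the ratio to $1+t/(n+a+2\mu-1)$, which is \eqref{quarta}.
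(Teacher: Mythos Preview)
Your proposal is correct and follows essentially the same route as the paper: both derive the master formula $W_\mu(r^\alpha\phi)=\sum_k c_k^2\bigl(\frac{\lambda_k^a+\alpha^2}{n+a+2\alpha-1}-\mu\bigr)$ via \eqref{w1} to obtain \eqref{prima} and \eqref{seconda}, then use the vanishing $W_{\mu+t}(r^{\mu+t}c)=0$ from \eqref{w4} for \eqref{terza}, and finally combine \eqref{w2}, \eqref{w3} (your ``Pohozaev-type constraint'' $\|\nabla_\theta c\|^2\ab=\lambda^a(\mu+t)\|c\|\ab^2$ is exactly \eqref{w3}) with \eqref{terza} to get \eqref{quarta}. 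Your write-up simply expands what the paper compresses into bare citations of \eqref{w1}--\eqref{w4}.
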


	\subsection{Properties of $h_e^s$}
	Finally we recall the properties of the function $h_e^s$ defined in \eqref{he}, which is the only $(1+s)-$homogeneous solution of \eqref{fract4} (with 0 obstacle). The latter follows from Proposition \ref{cha}, but it was originally proved in \cite{css08}.
	\begin{proposition} \label{heprop} Let $e\in\partial B_1'$ and $h_e^s$ as in \eqref{he}, then
		$h_e^s$ is a $(1+s)-$homogeneous solution of \eqref{fract4} (with 0 obstacle), $h_e^s=0$ on $B'_1\cap\{x\cdot e\le0\}$ and it holds
		\begin{equation*}
			\lim_{y\to0^+}(\y\partial_yh_e^s)=
			\begin{cases} 
				-c_s\lvert x\cdot e \rvert^{1-s} & B'_1\cap\{x\cdot e <0\}\\ 
				0 & B'_1\cap\{x\cdot e \ge0\},
			\end{cases}
		\end{equation*} with $c_s=2^{1-s}(1+s)$.
		Finally, the $L^2(\partial B_1,a)$ projection on linear functions of $h_e^s$ has the form $C(x\cdot e)$ for some $C>0$.
		\begin{proof} The proof is a simple computation, hence it will be omitted.
		\end{proof}
	\end{proposition}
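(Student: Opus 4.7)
The plan is to exploit the fact that $h_e^s$ depends only on $\xi:=x\cdot e$ and $y$, so everything reduces to a $2$-dimensional computation in the $(\xi,y)$-plane, where $L_a$ acts as $\partial_\xi^2+\partial_y^2+\tfrac{a}{y}\partial_y$. The $(1+s)$-homogeneity, the evenness in $y$, and the behavior on $\{y=0\}$ are all immediate from \eqref{he}: under $(\xi,y)\mapsto(t\xi,ty)$ the factor $s^{-1}\xi-\sqrt{\xi^2+y^2}$ scales linearly while $(\sqrt{\xi^2+y^2}+\xi)^s$ scales as $t^s$; only $y^2$ appears in the formula; on $\{y=0,\ \xi\le 0\}$ the factor $(\sqrt{\xi^2+y^2}+\xi)^s$ vanishes identically; and on $\{y=0,\ \xi>0\}$ one reads off $h_e^s=\tfrac{(1-s)2^s}{s}\xi^{1+s}>0$.

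For the $L_a$-harmonicity outside the branch set $\{y=0,\ \xi\le 0\}$, write $h_e^s=r^{1+s}g(\theta)$ in $2$-D polar coordinates $\xi=r\cos\theta$, $y=r\sin\theta$, with $g(\theta):=(s^{-1}\cos\theta-1)(1+\cos\theta)^s$. The equation $L_ah_e^s=0$ reduces to the ODE
$$g''(\theta)+a\cot\theta\cdot g'(\theta)+(1+s)(s+1+a)\,g(\theta)=0,$$
which (using $a=1-2s$, so that $s+1+a=2-s$) I would verify by direct substitution; the cleanest route is the substitution $f:=1+\cos\theta$, writing $g=s^{-1}(f^{s+1}-(1+s)f^s)$, and using $f'=-\sin\theta$, $f''=-\cos\theta$. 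For the Neumann-type boundary behavior I would Taylor-expand in $y$: when $\xi>0$, $\sqrt{\xi^2+y^2}=\xi+\tfrac{y^2}{2\xi}+O(y^4)$ gives $\partial_yh_e^s=O(y)$, hence $|y|^a\partial_yh_e^s\to 0$; when $\xi<0$, the key identity $\sqrt{\xi^2+y^2}+\xi=\tfrac{y^2}{2|\xi|}(1+O(y^2))$ yields $(\sqrt{\xi^2+y^2}+\xi)^s\sim(2|\xi|)^{-s}y^{2s}$ and, combined with $s^{-1}\xi-\sqrt{\xi^2+y^2}\to-\tfrac{1+s}{s}|\xi|$, gives $\partial_yh_e^s\sim-(1+s)2^{1-s}|\xi|^{1-s}y^{2s-1}$, so $|y|^a\partial_yh_e^s\to -c_s|\xi|^{1-s}$ with $c_s=2^{1-s}(1+s)$. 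Together with interior harmonicity, this also yields $-L_ah_e^s\ge 0$ distributionally on $\R^{n+1}$, confirming that $h_e^s$ solves \eqref{fract4}.

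For the projection statement, $E(\lambda_2^a)=\mathrm{span}\{x_1,\ldots,x_n,y\}$. The projection of $h_e^s$ on $y$ vanishes because $h_e^s$ is even in $y$ while $y|y|^a$ is odd; the projection on any $x\cdot e'$ with $e'\perp e$ vanishes by the rotational invariance of $h_e^s$ around the axis $e$ (applied to the reflection $e'\mapsto-e'$). Hence the projection has the form $C(x\cdot e)$. To secure $C>0$ I would use the slicing formula
$$\int_{\partial B_1}F(\xi,y)\,d\HH^n=|S^{n-2}|\int_{B^2}F(\xi,y)(1-\xi^2-y^2)^{(n-3)/2}\,d\xi\,dy,$$
valid for any $F$ depending only on $(\xi,y)$: applied to $F=h_e^s\,\xi\,|y|^a$ and switched to polar $(r,\theta)$ in $B^2$, the integral factors as a strictly positive radial integral times $\int_{-\pi}^{\pi}g(\theta)\cos\theta|\sin\theta|^a\,d\theta$, and a Beta-integral computation (substitution $u=\cos\theta$, using $a=1-2s$) evaluates the angular integral to $\tfrac{2^{2-s}(s+1)}{s(3-s)}>0$. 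The only genuinely non-trivial point in the whole argument is the ODE verification for $g$; the rest consists of Taylor expansions, symmetry arguments, and one concrete Beta-integral.
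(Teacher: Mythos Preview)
Your proposal is correct and supplies exactly the kind of computation the paper declines to give: the paper's own ``proof'' is literally the sentence ``The proof is a simple computation, hence it will be omitted.'' So there is no approach in the paper to compare against; your reduction to the $(\xi,y)$-plane, the polar ODE for $g$, the Taylor expansions for the weighted normal derivative, and the symmetry-plus-Beta-integral argument for the projection are precisely the natural way to fill this in, and each step checks out (in particular your value $\tfrac{2^{2-s}(1+s)}{s(3-s)}$ for the angular integral is correct).

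One small remark on the projection step: in the paper's setup the eigenfunctions of $-L_a^S$ are taken even in $y$ (see Remark~\ref{spherical}), so the eigenspace $E(\lambda_2^a)$ is $\mathrm{span}\{x_1,\dots,x_n\}$ and does not contain $y$; indeed $y$ is not $L_a$-harmonic when $a\neq 0$. This is harmless for your argument since you discard the $y$-component immediately by evenness, but you may want to state the eigenspace accordingly.
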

	\section{Epiperimetric inequality for $\Ws$}\label{epis}
	The proof of the epiperimetric inequality for $\Ws$ follows the ideas of the proof from \cite{csv17} in the case $s=\frac12$, i.e. we decompose the trace $c\in H^1(\partial B_1,a)$ in terms of eigenfunction of $L_a^S$. 
	\subsection{Decomposition of $c$}
	Let $c\in H^1(\partial B_1,a)$ even in the $y$ direction and such that $c\ge0$ in $\partial B'_1$. We decompose $c$ using eigenfunctions of the operator $-L_a^S$ defined in \eqref{las}.
	
	The projection on linear functions $E(\lambda_2^a)$ of $c$ has the form $c_1(x\cdot e)$ for some $e\in\partial B'_1$, then the projection of $h_e^s$ on $E(\lambda_2^a)$ has the same form $C(x\cdot e)$ for $C>0$. Thus we can choose $C>0$ such that $c$ and $Ch_e^s$ has the same projection on $E(\lambda_2^a)$.
	
	Notice that the function $u_0(x,y)=\lvert y \rvert^{1+s}$ restricted on $\partial B_1$ has 0 projection on $E(\lambda_2^a)$. Therefore we can choose $c_0\in\R$ such that the projections of $c-Ch_e^s$ and $u_0$ on the constant functions $E(\lambda^a_1) $ are the same. Then $$c(\theta)=Ch_e^s(\theta)+c_0u_0(\theta)+\phi(\theta),$$ where \be\label{perd2}\phi(\theta)=\sum_{\{k:\ \lambda^a_k\ge\lambda^a(2)\}} c_k\phi_k(\theta).\ee Hence we can decompose $z$ as $$z(r,\theta)=Cr^{1+s}h_e^s(\theta)+c_0r^{1+s}u_0(\theta)+r^{1+s}\phi(\theta)$$ and we can define the competitor $\zeta$ as $$\zeta(r,\theta)=Cr^{1+s}h_e^s(\theta)+c_0r^{1+s}u_0(\theta)+r^{2}\phi(\theta),$$ which is an admissible function ($\zeta\in\mathcal{K}_c$), since $\zeta(r,\theta)\ge r^2c(\theta)$ on $B'_1$.
	\subsection{Proof of Theorem \ref{thm1}}
	Let's start with a lemma, that will allow us to compute the Weiss' energy $\Ws$ of $z$ and $\zeta$.
	\begin{lemma} If $\psi=r^\alpha\phi(\theta)$, with $\phi\in H^1(\partial B_1,a)$, then \bea W_{1+s}(Ch_e^s+cu_0+\psi)&=-c_0^2(1+s)(1-s)\int_{B_1}\lvert y\rvert \,dX+W_{1+s}(\psi)\\&\qquad+\frac{1}{n+\alpha+1-s}\beta(\phi),\eea where \bea \beta(\phi)&:=-2c_0(1+s)(1-s)\int_{\partial B_1} \phi(\theta)\lvert \theta_{n+1}\rvert^{-s}\,d\mathcal{H}^{n}\\&\qquad+4c_sC\int_{\partial B'_1} \phi(\theta')(\theta'\cdot e)_-^{1-s}\,d\mathcal{H}^{n-1}.\eea
		\begin{proof}
			By Proposition \ref{heprop} and \eqref{w4}, we obtain
			\begingroup
			\allowdisplaybreaks
			\begin{align*}
				&W_{1+s}(Ch_e^s+c_0u_0+\psi)=C^2W_{1+s}(h_e^s)+W_{1+s}(c_0u_0+\psi)\\&\qquad+2C\biggl(\int_{B_1}\nabla h_e^s\cdot \nabla(c_0u_0+\psi)\y\,dX\\&\qquad-(1+s)\int_{\partial B_1}h_e^s(c_0u_0+\psi)\y\,d\mathcal{H}^n \biggl)\\&=W_{1+s}(c_0u_0+\psi)+2C\left(\int_{B_1}\nabla h_e^s\cdot \nabla \psi\y\,dX-({1+s})\int_{\partial B_1}h_e^s\psi\y\,d\mathcal{H}^n\right)\\&=W_{1+s}(c_0u_0+\psi)+4c_sC\left(\int_{B'_1}\psi(x,0) (x\cdot e)_-^{1-s}\,d\mathcal{H}^n\right)\\&=W_{1+s}(cu_0+\psi)+4c_sC \int_0^1\int_{\partial B'_1} r^\alpha \phi(\theta')r^{1-s}(\theta'\cdot e)_-^{1-s}r^{n-1}\,d\mathcal{H}^{n-1}(\theta')\,dr\\&=W_{1+s}(c_0u_0+\psi)+\frac{4c_sC}{n+\alpha+1-s}\int_{\partial B'_1} \phi(\theta')(\theta'\cdot e)_-^{1-s}\,d\mathcal{H}^{n-1}(\theta')
			\end{align*}
			\endgroup
			Additionally, since $u_0$ is $(1+s)-$homogeneous and $$L_au_0\y=(1+s)(1-s)\lvert y\rvert ^{-s},$$ we have that
			\begingroup
			\allowdisplaybreaks
			\begin{align*}
				W_{1+s}(cu_0&+\psi)=c_0^2W_{1+s}(u_0)+W_{1+s}(\psi)\\&\qquad+2c_0\left(\int_{B_1}\nabla u_0\cdot \nabla\psi\y\,dX-({1+s})\int_{\partial B_1}u_0\psi\y\,d\mathcal{H}^n\right)\\&=c_0^2W_{1+s}(u_0)+W_{1+s}(\psi)-2c_0\int_{B_1}\psi L_a u_0\y\,dX\\&=c_0^2W_{1+s}(u_0)+W_{1+s}(\psi)-2c_0(1+s)(1-s)\int_{B_1}\psi\lvert y \rvert^{-s}\,dX \\&=c_0^2W_{1+s}(u_0)+W_{1+s}(\psi)\\&\qquad-2c_0(1-s)({1+s})\int_0^1\int_{\partial B_1} r^\alpha \phi(\theta)r^{-s}\lvert \theta_{n+1}\rvert^{-s}r^{n}\,d\mathcal{H}^{n}(\theta)\,dr\\&=c_0^2W_{1+s}(u_0)+W_{1+s}(\psi)\\&\qquad-\frac{2c_0(1+s)(1-s)}{n+\alpha+1-s}\int_{\partial B_1} \phi(\theta)\lvert \theta_{n+1}\rvert^{-s}\,d\mathcal{H}^{n}(\theta).
			\end{align*}
			\endgroup
			Finally, we notice that \be\label{perd1}W_{1+s}(u_0)=-\int_{B_1}u_0L_a u_0\y\,dX=-(1+s)(1-s)\int_{B_1}\lvert y\rvert^{1+s}\lvert y\rvert^{-s}\,dX<0,\ee which, together with the previous identities, gives the claim.
		\end{proof}
	\end{lemma}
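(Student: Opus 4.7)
The plan is to treat $W_{1+s}$ as a quadratic form on $H^1(B_1,a)$ and expand the argument $Ch_e^s+c_0u_0+\psi$ into three squared terms plus three symmetric bilinear cross terms. I will then reduce each cross term to a boundary/distributional integral via Pohozaev-type integration by parts, exploiting that both $h_e^s$ and $u_0$ are $(1+s)$-homogeneous, so that $\nabla v\cdot\nu=(1+s)v$ on $\partial B_1$ and the boundary piece inside $W_{1+s}$ cancels exactly against the integration-by-parts boundary term.

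For the squared terms: $W_{1+s}(h_e^s)=0$ because $h_e^s$ is a $(1+s)$-homogeneous solution of \eqref{fract4} and the Weiss' energy of such a solution at its own homogeneity vanishes (identity \eqref{w4} used elsewhere in the paper). For $W_{1+s}(u_0)$, a direct computation in $\{y\neq 0\}$ yields $L_au_0=(1+s)(1-s)|y|^{s-1}$; moreover $\lim_{y\to0^+}(\y\partial_yu_0)=(1+s)\lim_{y\to 0^+}y^{1-s}=0$, so there is no distributional contribution at $\{y=0\}$. Integration by parts then gives $W_{1+s}(u_0)=-\int_{B_1}u_0L_au_0\y\,dX=-(1+s)(1-s)\int_{B_1}|y|\,dX$, which is the first summand on the right-hand side (after multiplication by $c_0^2$) and matches \eqref{perd1}.

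For the $Ch_e^s$-cross term with $c_0u_0+\psi$, I invoke the distributional identity recorded right after \eqref{fract2}, namely $\mathrm{div}(\y\nabla h_e^s)=2\lim_{y\to0^+}(\y\partial_yh_e^s)\mathcal{H}^n|_{\{y=0\}}$, together with Proposition \ref{heprop} which evaluates the limit as $-c_s(x\cdot e)_-^{1-s}$ on $B_1'$. Integration by parts combined with the homogeneity cancellation reduces the cross term to $4Cc_s\int_{B_1'}(c_0u_0+\psi)(x,0)(x\cdot e)_-^{1-s}\,dx$; since $u_0|_{y=0}\equiv 0$, only the $\psi$ piece survives. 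Passing to polar coordinates $x=r\theta'$ and using $\psi=r^\alpha\phi(\theta')$ yields a radial integral $\int_0^1 r^{n+\alpha-s}\,dr=\frac{1}{n+\alpha+1-s}$ times $\int_{\partial B_1'}\phi(\theta')(\theta'\cdot e)_-^{1-s}\,d\mathcal{H}^{n-1}$, which gives the second piece of $\beta(\phi)/(n+\alpha+1-s)$. For the $c_0u_0$-$\psi$ cross term, the absence of a boundary concentration for $u_0$ turns the same integration by parts into $-2c_0\int_{B_1}\psi L_au_0\y\,dX=-2c_0(1+s)(1-s)\int_{B_1}\psi|y|^{-s}\,dX$; decomposing again into polar coordinates with $|y|^{-s}=r^{-s}|\theta_{n+1}|^{-s}$ and integrating in $r$ yields the first piece of $\beta(\phi)/(n+\alpha+1-s)$. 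Finally, the remaining $Ch_e^s$-$c_0u_0$ cross term vanishes for the same reason, namely $u_0|_{y=0}\equiv 0$ kills the only nontrivial boundary contribution coming from $h_e^s$.

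Collecting the three squared terms and the three bilinear contributions produces the claimed identity, with the common denominator $n+\alpha+1-s$ arising uniformly from the radial integral $\int_0^1 r^{n+\alpha-s}\,dr$. The only genuinely delicate step is the correct use of the distributional divergence formula for $\y\nabla h_e^s$ at $\{y=0\}$; the rest is straightforward bookkeeping in spherical coordinates, and no additional estimate is required beyond Proposition \ref{heprop}.
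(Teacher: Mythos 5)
Your proposal is correct and follows essentially the same route as the paper: expand the quadratic form $W_{1+s}$, kill the boundary terms via the $(1+s)$-homogeneity of $h_e^s$ and $u_0$, evaluate the $h_e^s$ cross terms through the distributional identity for $\mathrm{div}(\y\nabla h_e^s)$ and Proposition \ref{heprop}, compute $L_au_0$ explicitly, and integrate radially to produce the common factor $\frac{1}{n+\alpha+1-s}$. The only difference is organizational (a single trilinear expansion rather than the paper's nested two-stage one), and all your individual computations, including the vanishing of the $h_e^s$--$u_0$ cross term and of the boundary concentration for $u_0$, agree with the paper.
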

	\begin{proof}[Proof of Theorem \ref{thm1}] 
		By this lemma, we can easily conclude the proof of the epiperimetric inequality for $\Ws$. Indeed, if $\kappa=\kappa^a_{2,1+s}=\frac{1+a}{2n+a+5}$, we can use $$\frac{1}{n+2+1-s}-\left(1-\frac{1+a}{2n+a+5}\right)\left(\frac{1}{n+(1+s)+1-s}\right)=0,$$ with
		\eqref{seconda}, \eqref{perd2} and \eqref{perd1} to conclude.
	\end{proof}
	\begin{remark}\label{epi2} If the equality in the epiperimetric inequality holds, then $c_0=0$ and, by \eqref{seconda}, $\phi$ is an eigenfunction corresponding to the eigenvalue $\lambda^a(2)$. Therefore $$z(r,\theta)=Cr^{1+s}h_e^s(\theta)+r^{1+s}\phi(\theta).$$
		
		Furthermore, since $z\ge0$ on $B'_1$ and $h_e^s=0$ on $B'_1\cap \{x\cdot e<0\}$, we have that $r^{1+s}\phi\ge0$ on $B'_1\cap \{x\cdot e<0\}$, but $r^{1+s}\phi$ is even in the $y$ direction, so $r^{1+s}\phi\ge0$ on $B'_1$.
	\end{remark}
	
	\section{Logarithmic epiperimetric inequality for $\Wm$}\label{epim}
	
	The proof of the logarithmic epiperimetric inequality for $\Wm$ follows the ideas of the proof from \cite{csv17} in the case $s=\frac12$. The strategy is the same as the one of the proof of Theorem \ref{thm1}.
	\subsection{Construction of $h_{2m}$}
	For the proof of the logarithmic epiperimetric inequality, we need to build an eigenfunction of $-L_a^S$ as follows. 
	\begin{remark}\label{h2m} There is a $2m-$homogeneous $L_a-$harmonic polynomial $h_{2m}$ such that $h_{2m}\equiv 1$ on $\partial B'_1$.
		
		The polynomial is given by $$h_{2m}=\sum_{k=0}^m C_k y^{2k}(x_1^2+\ldots+x_n^2)^{m-k},$$ where the constants $C_k$ are yet to be chosen.
		
		Notice that 
		$$
		\begin{aligned}\sum_{i=1}^{n}\partial_{i,i} h_{2m} &=\sum_{k=0}^{m-1}4C_k(m-k)(m-k-1)y^{2k}(x_1^2+\ldots+x_n^2)^{m-k-1}\\&\qquad+\sum_{k=0}^{m-1}2nC_k(m-k)y^{2k}(x_1^2+\ldots+x_n^2)^{m-k-1}\\&=\sum_{k=1}^{m}C_{k-1}y^{2k-2}2(m-k+1)(2(m-k)+n)(x_1^2+\ldots+x_n^2)^{m-k}, \end{aligned}
		$$
		$$\partial_{y,y} h_{2m}=\sum_{k=1}^{m}C_k(2k)(2k-1)y^{2k-2}(x_1^2+\ldots+x_n^2)^{m-k} $$ and $$\frac ay\partial_y h_{2m} =\sum_{k=1}^maC_k(2k)y^{2k-2}(x_1+\ldots x_n)^{m-k}.$$
		Thus, $h_{2m}$ is $L_a-$harmonic if and only if $$C_k(2k)(2k-1+a)+2(m-k+1)(2(m-k)+n)C_{k-1}=0.$$
		Therefore, we can choose $$C_{k}=-\frac{2(m-k+1)(2(m-k)+n)}{2k(2k-1+a)}C_{k-1},$$ for $k\in\{1,\ldots m\}$ and $C_0=1$, which concludes the construction of $h_{2m}$.
	\end{remark}
	
	\subsection{Decomposition of $c$}
	
	Let $c\in H^1(\partial B_1,a)$, we can decompose $$c(\theta)=\sum_{k=1}^\infty c_k \phi_k(\theta), $$ where $\phi_k(\theta)$ are the normalized eigenfunctions of $-L_a^S$ with eigenvalues $\lambda_k^a$ and corresponding homogeneity $\alpha_k$, then $$c(\theta)=P(\theta)+\phi(\theta),$$ with $$P(\theta)=\sum_{\{k:\ \alpha_k\le2m\}}c_k \phi_k(\theta)$$ and $$\phi(\theta)=\sum_{\{k:\ \alpha_k>2m\}}c_k \phi_k(\theta).$$ 
	
	Let $z=r^{2m}c$ be the $2m-$homogeneous extension of $c$ and let $h_{2m}$ as in Remark \ref{h2m}, therefore $$z(r,\theta)=r^{2m}P(\theta)+Mr^{2m}h_{2m}(\theta)-Mr^{2m}h_{2m}(\theta)+r^{2m}\phi(\theta),$$ where $M=\max\{P_-(\theta'):\theta'\in\partial B'_1\}$ and $P_-(\theta)$ is the negative part of $P(\theta)$.
	
	We choose a competitor $\zeta$ extending with homogeneity $\alpha>2m$ the high modes on the sphere and leaving the rest unchanged, i.e. $$\zeta(r,\theta)=r^{2m}P(\theta)+Mr^{2m}h_{2m}(\theta)-Mr^{\alpha}h_{2m}(\theta)+r^{\alpha}\phi(\theta),$$for some $2m<\alpha<2m+\frac12$, then $$\zeta(r,\theta)\ge r^\alpha c(\theta)\ge0$$ on $B'_1$, since we have chosen $M>0$ such that $P(\theta)+M\ge0$ $\forall \theta\in \partial B'_1$. This means that $\zeta\in\mathcal{K}_c$.
	
	Defining $\kappa_{\alpha,2m}^a$ as in \eqref{kappa}, we will choose $\varepsilon =\varepsilon (n,m)>0$ small enough and $2m<\alpha<2m+\frac12$ such that \begin{equation}\label{kkkk}\kappa_{\alpha,2m}^a=\varepsilon\Theta^{-\beta} \lVert\nabla_\theta \phi\rVert_{L^2(\partial B_1,a)}^{2\beta}
	\end{equation} for $\Theta>0$ and $\beta=\frac{n-1}{n+1}$. Notice that to be able to choose such $\alpha$, we must have an estimate of the type \begin{equation}\label{stimaaaa}
		\lVert\nabla_\theta \phi\rVert_{L^2(\partial B_1,a)}^{2}\le C_{n,m,a}\Theta,
	\end{equation} for some $C_{n,m}>0$, that should depend only on $n$, $m$ and $a$, since we want that $\alpha$ depends only on $n$, $m$ and $a$. For this reason we ask for the bounds $\norm c_{L^2(\partial B_1),a}^2\le\Theta$ and $\lvert W_{2m}(z)\rvert\le\Theta$.
	\subsection{Proof of Theorem \ref{thm2}}
	First we want to compute the term $W_{2m}(\zeta)-(1-\kappa_{\alpha,2m}^a)W_{2m}(z)$, and we see that for $\alpha$ near $2m$ and $\alpha>2m$, it is negative. This is contained in the following lemmas.
	\begin{lemma} In the hypotheses of Theorem \ref{thm2}, we have\begin{equation}\label{log1}
			W_{2m}(\zeta)-(1-\kappa_{\alpha,2m}^a)W_{2m}(z)\le C_1M^2 (\kappa^a_{\alpha,2m})^2-C_2\kappa^a_{\alpha,2m} \lVert\nabla_\theta \phi\rVert_{L^2(\partial B_1,a)}^{2},
		\end{equation} for some $ C_1 ,C_2>0$ depending only on $n$, $m$ and $a$.
		\begin{proof}
			We introduce the following functions: $$\psi(r,\theta)=\sum_{\{k:\ \alpha_k<2m\}} c_k r^{2m} \phi_k(\theta),$$ $$ H_{2m}(r,\theta)=Mr^{2m}h_{2m}(\theta)+\sum_{\{k:\ \alpha_k=2m\}} c_k r^{2m} \phi_k(\theta)$$ and $$\varphi_\mu(r,\theta)=-Mr^\mu h_{2m}(\theta)+\sum_{\{k:\ \alpha_k>2m\}} c_k r^{\mu} \phi_k(\theta). $$
			Then, we can write $z=\psi+H_{2m}+\varphi_{2m}$ and $\zeta=\psi+H_{2m}+\varphi_\alpha$.
			
			With this decomposition, $\psi$ is orthogonal in $L^2(B_1,a)$ and in $H^1(B_1,a)$ to $\varphi_\mu$ for $\mu=\alpha,2m$. Additionally, $H_{2m}$ is $L_a-$harmonic and $2m-$homogeneous, therefore using \eqref{w4}, we get $$W_{2m}(z)=W_{2m}(\psi+\varphi_{2m})=W_{2m}(\psi)+W_{2m}(\varphi_{2m})$$ and $$W_{2m}(\zeta)=W_{2m}(\psi+\varphi_{\alpha})=W_{2m}(\psi)+W_{2m}(\varphi_{\alpha}).$$ 
			Notice now that, since $\psi$ has only frequencies lower than $2m$, we have $W_{2m}(\psi)<0$. Thus, using \eqref{prima}, we get
			\begin{equation*}
				\begin{aligned}
					W_{2m}(\zeta)-(1-&\kappa^a_{\alpha,2m})W_{2m}(z)\\&=\kappa^a_{\alpha,2m}W_{2m}(\psi)+W_{2m}(\varphi_\alpha)-(1-\kappa^a_{\alpha,2m})W_{2m}(\varphi_{2m})\\&\le W_{2m}(\varphi_\alpha)-(1-\kappa^a_{\alpha,2m})W_{2m}(\varphi_{2m})\\&=M^2\lVert h_{2m}\rVert_{L^2(\partial B_1,a)}^2 \frac{\kappa^a_{\alpha,2m}}{n+a+2\alpha-1}(\lambda^a(\alpha)-\lambda^a(2m))\\&\qquad-\frac{\kappa^a_{\alpha,2m}}{n+a+2\alpha-1}\sum_{\{k:\ \alpha_k>2m\}}(\lambda_k^a-\lambda^a(\alpha))c_k^2\\&=M^2\lVert h_{2m}\rVert_{L^2(\partial B_1,a)}^2(\kappa^a_{\alpha,2m})^2\frac{(\alpha+2m+n+a-1)^2}{n+a+2\alpha-1} \\&\qquad-\frac{\kappa^a_{\alpha,2m}}{n+a+2\alpha-1}\sum_{\{k:\ \alpha_k>2m\}}(\lambda_k-\lambda(\alpha))c_k^2\\&\le C_1M^2(\kappa^a_{\alpha,2m})^2-\overline C\kappa^a_{\alpha,2m}\sum_{\{k:\ \alpha_k>2m\}}(\lambda_k-\lambda(\alpha))c_k^2,
				\end{aligned} 
			\end{equation*}where in the second equality we used \eqref{seconda} with $C_1$ and $\overline C$ depending only on $n$, $m$ and $a$.
			
			Observe that \begin{equation}\label{ult}\begin{aligned}
					\sum_{\{k:\ \alpha_k>2m\}} (\lambda^a_k-\lambda^a(\alpha))c_k^2&=\sum_{\{k:\ \alpha_k>2m\}} \lambda^a_kc_k^2-\lambda^a(\alpha)\sum_{\{k:\ \alpha_k>2m\}} c_k^2\\&\ge\sum_{\{k:\ \alpha_k>2m\}} \lambda^a_kc_k^2-\frac{\lambda^a(\alpha)}{\lambda^a(2m+1)}\sum_{\{k:\ \alpha_k>2m\}} \lambda^a_kc_k^2\\&\ge\sum_{\{k:\ \alpha_k>2m\}} \lambda^a_kc_k^2-\frac{\lambda^a(2m+\frac12)}{\lambda^a(2m+1)}\sum_{\{k:\ \alpha_k>2m\}} \lambda^a_kc_k^2\\&\ge \overline C_2\sum_{\{k:\ \alpha_k>2m\}}\lambda^a_kc_k^2=\overline C_2\lVert\nabla_\theta \phi\rVert_{L^2(\partial B_1,a)}^{2},
				\end{aligned}
			\end{equation} where we used $$\lVert \nabla _\theta\phi_k\rVert^2\ab=\lambda^a_k\lVert \phi_k\rVert^2\ab $$ and we have chosen $$\overline C_2=\frac{\lambda^a(2m+1)-\lambda^a(2m+\frac12)}{\lambda^a(2m+1)}.$$ Hence, we conclude by choosing $C_2=\overline C \overline C_2$ and $C_1$ as above.
		\end{proof}
	\end{lemma}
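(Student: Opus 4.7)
The plan is to exploit the orthogonal decomposition $z=\psi+H_{2m}+\varphi_{2m}$ and $\zeta=\psi+H_{2m}+\varphi_\alpha$ introduced in the setup and to reduce the estimate to a computation on the bracket $W_{2m}(\varphi_\alpha)-(1-\kappa^a_{\alpha,2m})W_{2m}(\varphi_{2m})$, which is directly accessible via the spectral identities \eqref{prima}--\eqref{seconda}.

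First I would show that the Weiss energy is additive on this decomposition, namely $W_{2m}(z)=W_{2m}(\psi)+W_{2m}(\varphi_{2m})$ and $W_{2m}(\zeta)=W_{2m}(\psi)+W_{2m}(\varphi_{\alpha})$. The piece $H_{2m}$ contributes nothing: being $L_a$-harmonic and $2m$-homogeneous, $W_{2m}(H_{2m})=0$ by \eqref{w4}, and its cross terms with any function $v$ vanish thanks to the identity $\int_{B_1}\nabla H_{2m}\cdot\nabla v\,|y|^a\,dX=2m\int_{\partial B_1}H_{2m}v\,|y|^a\,d\mathcal{H}^n$, which cancels the boundary term $-2m\int_{\partial B_1}H_{2m}v\,|y|^a\,d\mathcal{H}^n$ appearing in the Weiss functional. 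The remaining cross terms between $\psi$ and $\varphi_\mu$ vanish because their angular traces are $L^2(\partial B_1,a)$-orthogonal: $\psi$ involves only eigenmodes with $\alpha_k<2m$ while $\varphi_\mu$ involves only modes with $\alpha_k\ge 2m$, and eigenfunctions of $-L_a^S$ corresponding to distinct eigenvalues are orthogonal also in the Dirichlet form.

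Once additivity is in place, I would write $W_{2m}(\zeta)-(1-\kappa^a_{\alpha,2m})W_{2m}(z)=\kappa^a_{\alpha,2m}W_{2m}(\psi)+\bigl[W_{2m}(\varphi_\alpha)-(1-\kappa^a_{\alpha,2m})W_{2m}(\varphi_{2m})\bigr]$ and discard the first term, which is nonpositive by \eqref{prima} since $\psi$ contains only modes with $\lambda^a_k<\lambda^a(2m)$. For the bracket, I would apply \eqref{seconda} to $\varphi_\mu=r^\mu(-Mh_{2m}+\phi)$ decomposed in the eigenbasis of $-L_a^S$. The $h_{2m}$ mode (eigenvalue $\lambda^a(2m)$) yields the positive contribution $M^2\lVert h_{2m}\rVert_{L^2(\partial B_1,a)}^{2}(\kappa^a_{\alpha,2m})^{2}(\alpha+2m+n+a-1)^2/(n+a+2\alpha-1)$ after using the algebraic identity $\lambda^a(\alpha)-\lambda^a(2m)=\kappa^a_{\alpha,2m}(\alpha+2m+n+a-1)^2$, which identifies $C_1$. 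The modes of $\phi$, all with $\alpha_k>2m$, yield the negative contribution $-\tfrac{\kappa^a_{\alpha,2m}}{n+a+2\alpha-1}\sum_{\alpha_k>2m}(\lambda^a_k-\lambda^a(\alpha))c_k^2$, which I would bound above by $-C_2\kappa^a_{\alpha,2m}\lVert\nabla_\theta\phi\rVert_{L^2(\partial B_1,a)}^{2}$ via a spectral gap argument: for $2m<\alpha<2m+\tfrac12$ and $\alpha_k\ge 2m+1$ one has $(\lambda^a_k-\lambda^a(\alpha))/\lambda^a_k\ge 1-\lambda^a(2m+\tfrac12)/\lambda^a(2m+1)>0$, a constant depending only on $n,m,a$, after which summing against $\lambda^a_k c_k^2=\lVert\nabla_\theta\phi_k\rVert_{L^2(\partial B_1,a)}^2$ recovers the $H^1$-seminorm of $\phi$.

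The main technical obstacle is the bookkeeping of the orthogonality, namely the simultaneous use of (i) the harmonicity of $H_{2m}$ to kill its cross terms in $W_{2m}$ and (ii) the frequency separation between $\psi$ and $\varphi_\mu$ to kill the remaining cross terms; the construction of $h_{2m}$ in Remark \ref{h2m} together with the choice $M=\max_{\partial B_1'}P_-$ is precisely what makes $\zeta\ge 0$ on $B_1'$ compatible with these cancellations. Once additivity is established, the rest is essentially algebra plus a standard spectral gap argument, the only subtlety being the requirement that the constants depend only on $n,m,a$, which is guaranteed by restricting $\alpha$ to the interval $(2m,2m+\tfrac12)$.
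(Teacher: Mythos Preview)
Your proposal is correct and follows essentially the same route as the paper's proof: the same decomposition $z=\psi+H_{2m}+\varphi_{2m}$, $\zeta=\psi+H_{2m}+\varphi_\alpha$, the same additivity argument (via $L_a$-harmonicity and $2m$-homogeneity of $H_{2m}$ together with spectral orthogonality of $\psi$ and $\varphi_\mu$), the same discarding of $\kappa^a_{\alpha,2m}W_{2m}(\psi)\le 0$, the same application of \eqref{seconda} to isolate the $M^2(\kappa^a_{\alpha,2m})^2$ term, and the same spectral-gap bound $1-\lambda^a(2m+\tfrac12)/\lambda^a(2m+1)$ to convert the high-mode sum into $\lVert\nabla_\theta\phi\rVert_{L^2(\partial B_1,a)}^2$. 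The algebraic identity you quote, $\lambda^a(\alpha)-\lambda^a(2m)=\kappa^a_{\alpha,2m}(\alpha+2m+n+a-1)^2$, is exactly what the paper uses implicitly in the passage from the first to the second expression for the $h_{2m}$ contribution.
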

	\begin{lemma}
		In the same hypotheses of the Theorem \ref{thm2}, we have \begin{equation}\label{log2}
			M^2\le C_3\Theta^\beta\lVert\nabla_\theta \phi\rVert_{L^2(\partial B_1,a)}^{2(1-\beta)},
		\end{equation} for some $C_3>0$ that depends only on $n$, $m$ and $a$.
		\begin{proof}
			Since $\phi_k$ are $C^1$ on $\partial{B_1}$ and only a finite number of $k$ is such that $\alpha_k\le2m$, we have $$\lVert\nabla_\theta \phi_k\rVert_{L^\infty(\partial B_1)}^{2}\le L_m $$ for all $k$ such that $\alpha_k\le2m$ and for some $L_m>0$. 
			
			Moreover, the coefficients $c_k$ corresponding to $P$ are bounded by $\Theta^\frac12$, since $\norm P_{L^2(\partial B_1,a)}^2\le\norm c_{L^2(\partial B_1,a)}^2\le\Theta$, we get $P$ is $L-$Lipschitz continuous on $\partial{B_1}$, with $L=L_{n,m,a}\Theta^\frac12$.
			
			Now, since $\phi^2(\theta)\ge P_-^2(\theta)$ for all $\theta\in\partial B'_1$, it follows that
			$$\begin{aligned}
				\int_{\partial B'_1}& P_-^2\,d\mathcal{H}^{n-1}\le\int_{\partial B'_1} \phi^2\,d\mathcal{H}^{n-1}\\&\le C\left(\int_{\partial B_1} \lvert\nabla_\theta\phi\rvert^2\y\,d\mathcal{H}^n+\int_{ \partial B_1} \phi^2\y\,d\mathcal{H}^n\right)\\&\le C\left(\int_{\partial B_1} \lvert\nabla_\theta\phi\rvert^2\y\,d\mathcal{H}^n+\sum_{\{k: \ \alpha_k>2m\}}\frac{1}{\lambda^a_k}\int_{ \partial B_1} \lvert\nabla_\theta\phi_k\rvert^2\y\,d\mathcal{H}^n\right) \\&\le C\left(1+\frac{1}{\lambda^a(2m)}\right)\int_{ \partial B_1} \lvert\nabla_\theta\phi\rvert^2\y\,d\mathcal{H}^n,
			\end{aligned}$$by Proposition \ref{embedding}, and since $\phi$ contains only eigenfunctions corresponding to eigenvalue $\lambda^a_k>\lambda^a(2m)$.
			
			Finally we claim that $$\int_{\partial B_1'}P_-^2(\theta)\,d\mathcal{H}^{n-1}\ge C M^2 \left(\frac{M}{L}\right)^{n-1}=C\frac{{M}^{n+1}}{{L}^{n-1}},$$ in fact the norm in $L^2$ of $P_-^2(\theta)$ is controlled by the volume of an $(n-1)$-dimensional cone with height $M^2$ and radius of the base $\frac{M}{L}$, since the graph of $P_-$ must be above this cone, by the Lipschitz continuity of $P_-$. 
			
			Thus, since $1-\beta=\frac2{n+1}$ and $L=L_{n,m}\Theta^\frac12$, we obtain $$M^2\le C L^{2\frac{n-1}{n+1}}\lVert\nabla_\theta \phi\rVert_{L^2(\partial B_1,a)}^{\frac{4}{n+1}}=C_3\Theta^\beta\lVert\nabla_\theta \phi\rVert_{L^2(\partial B_1,a)}^{2(1-\beta)},$$ which is precisely \eqref{log2}.
			
		\end{proof}
	\end{lemma}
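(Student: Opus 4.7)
The plan is to derive the bound in four steps, exploiting the admissibility constraint that $c \geq 0$ on $\partial B_1'$, the smoothness (and hence Lipschitz regularity) of the low-frequency part $P$, and a Poincar\'e-type estimate for the high-frequency part $\phi$ coming from the spectral gap above $\lambda^a(2m)$.

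First, I would use admissibility: since $z = r^{2m}c(\theta) \in \mathcal{K}_c$ satisfies $z \geq 0$ on $B_1'$, we have $c = P + \phi \geq 0$ on $\partial B_1'$. Writing $P = P_+ - P_-$, at points of $\partial B_1'$ where $P_- > 0$ we have $P_+ = 0$, so nonnegativity of $c$ forces $\phi \geq P_- > 0$, hence $\phi^2 \geq P_-^2$. Integrating gives
\[
\int_{\partial B_1'} P_-^2 \, d\mathcal{H}^{n-1} \leq \int_{\partial B_1'} \phi^2 \, d\mathcal{H}^{n-1}.
\]
Second, I would establish a Lipschitz bound on $P$. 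Only finitely many indices $k$ satisfy $\alpha_k \leq 2m$, so $P$ is a finite linear combination of smooth eigenfunctions $\phi_k$, each with $\|\nabla_\theta \phi_k\|_{L^\infty(\partial B_1)}$ controlled by a constant depending only on $n, m, a$. Because $|c_k| \leq \|c\|_{L^2(\partial B_1,a)} \leq \Theta^{1/2}$, the resulting Lipschitz constant of $P$, and consequently of $P_-$, is bounded by $L \leq L_{n,m,a}\,\Theta^{1/2}$.

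Third, I would turn the Lipschitz regularity of $P_-$ into a geometric lower bound on $\int_{\partial B_1'} P_-^2$. Pick $\theta_0 \in \partial B_1'$ with $P_-(\theta_0) = M$; by the Lipschitz bound, $P_-(\theta) \geq M - L|\theta - \theta_0|$, so $P_- \geq M/2$ on an $(n{-}1)$-dimensional geodesic ball of radius $M/(2L)$ centered at $\theta_0$ inside $\partial B_1'$. Comparing to the volume of the subgraph of this Lipschitz bump yields
\[
\int_{\partial B_1'} P_-^2 \, d\mathcal{H}^{n-1} \geq c_n\, \frac{M^{n+1}}{L^{n-1}},
\]
with $c_n > 0$ a purely dimensional constant.

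Finally, I would bound $\int_{\partial B_1'} \phi^2$ by invoking the weighted trace inequality from $H^1(\partial B_1, a)$ into $L^2(\partial B_1')$ (Proposition \ref{embedding}) together with the spectral gap: each mode in $\phi$ has eigenvalue $\lambda_k^a \geq \lambda^a(2m+1) > \lambda^a(2m)$, giving
\[
\|\phi\|_{L^2(\partial B_1, a)}^2 \leq \frac{1}{\lambda^a(2m+1)}\|\nabla_\theta \phi\|_{L^2(\partial B_1, a)}^2,
\]
and hence $\int_{\partial B_1'} \phi^2\, d\mathcal{H}^{n-1} \leq C_{n,m,a} \|\nabla_\theta \phi\|_{L^2(\partial B_1, a)}^2$. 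Combining the three displayed inequalities and using $L^{n-1} \leq C_{n,m,a}\, \Theta^{(n-1)/2}$ produces $M^{n+1} \leq C_{n,m,a}\, \Theta^{(n-1)/2} \|\nabla_\theta \phi\|_{L^2(\partial B_1, a)}^2$, and raising both sides to the power $2/(n+1)$ delivers \eqref{log2} with $\beta = \frac{n-1}{n+1}$. The main obstacle is to make sure that the Lipschitz constant $L$ really depends only on $n, m, a$ and $\Theta^{1/2}$: one must verify that both the cardinality of $\{k : \alpha_k \leq 2m\}$ and the $C^1$ norms of the finitely many $\phi_k$ involved are controlled purely by $n, m, a$. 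Once the Lipschitz constant is uniform in $c$, the cone-volume computation and the spectral-gap Poincar\'e estimate are routine.
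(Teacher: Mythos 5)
Your proposal is correct and follows essentially the same route as the paper: the admissibility inequality $\phi^2\ge P_-^2$ on $\partial B_1'$, the Lipschitz bound $L\le L_{n,m,a}\Theta^{1/2}$ from the finitely many low modes, the cone-volume lower bound $\int_{\partial B_1'}P_-^2\gtrsim M^{n+1}/L^{n-1}$, and the trace embedding plus spectral gap to control $\int_{\partial B_1'}\phi^2$ by $\lVert\nabla_\theta\phi\rVert^2_{L^2(\partial B_1,a)}$. You in fact justify the pointwise inequality $\phi\ge P_-$ and the cone estimate in slightly more detail than the paper does.
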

	\begin{proof}[Proof of Theorem \ref{thm2}]
		Notice that we can suppose $W_{2m}(z)>0$, otherwise we have done.
		
		First, as already observed, we must show the estimate \eqref{stimaaaa}. 
		In fact, as in \eqref{ult} with $\alpha=2m$, we have
		$$\lVert\nabla_\theta \phi\rVert_{L^2(\partial B_1,a)}^{2}\le C_{n,m,a}\sum_{\{k:\ \alpha_k>2m\}} (\lambda_k-\lambda(2m))c_k^2=C_{n,m,a}W_{2m}(r^{2m}\phi), $$ where in the last equality we used \eqref{prima}. 
		
		Using the orthogonality of $P$ and $\phi$ and again \eqref{prima}, we obtain $$\begin{aligned}W_{2m}(r^{2m}\phi)&=W_{2m}(z)-W_{2m}(r^{2m}P(\theta))\\&\le W_{2m}(z)+\frac{\lambda^a(2m)}{n+a+4m-1}\norm P_{L^2(\partial B_1,a)}^2\\&\le W_{2m}(z)+2m\norm P_{L^2(\partial B_1,a)}^2\le W_{2m}(z)+2m\norm c_{L^2(\partial B_1,a)}^2\\&\le(1+2m)\Theta,\end{aligned}$$ where we used that $\norm c_{L^2(\partial B_1)}^2\le\Theta$ and $\lvert W_{2m}(z)\rvert\le\Theta$, that concludes the estimate.
		
		If we choose $\varepsilon <\frac{C_2}{C_1C_3}$, where $C_1,C_2,C_3$ are the constant in \eqref{log1} and \eqref{log2} and if we choose $\kappa_{\alpha,2m}$ as in \eqref{kkkk}, we deduce that \begin{equation}\label{epilog}\begin{aligned}W_{2m}(\zeta)-(1&-\kappa_{\alpha,2m})W_{2m}(z)\le C_1M^2 \kappa_{\alpha,2m}^2-C_2\kappa_{\alpha,2m} \lVert\nabla_\theta \phi\rVert_{L^2(\partial B_1,a)}^{2}\\&\le C_1C_3 \kappa_{\alpha,2m}^2\Theta^\beta\lVert\nabla_\theta \phi\rVert_{L^2(\partial B_1,a)}^{2(1-\beta)}-C_2\kappa_{\alpha,2m} \lVert\nabla_\theta \phi\rVert_{L^2(\partial B_1,a)}^{2}\\&=C_1C_3 \varepsilon ^2\Theta^{-\beta}\lVert\nabla_\theta \phi\rVert_{L^2(\partial B_1,a)}^{4\beta}\lVert\nabla_\theta \phi\rVert_{L^2(\partial B_1,a)}^{2(1-\beta)}\\&\qquad -C_2\varepsilon \Theta^{-\beta}\lVert\nabla_\theta \phi\rVert_{L^2(\partial B_1,a)}^{2\beta} \lVert\nabla_\theta \phi\rVert_{L^2(\partial B_1,a)}^{2}\\&=\varepsilon (\varepsilon C_1C_3-C_2)\Theta^{-\beta}\lVert\nabla_\theta \phi\rVert_{L^2(\partial B_1,a)}^{2+2\beta}<0.
			\end{aligned}
		\end{equation} Finally \begin{equation*}\begin{aligned}W_{2m}(z)&=\frac{1}{n+a+4m-1}\sum_{k=1}^\infty(\lambda^a_k-\lambda^a(2m))c_k^2\\&\le\frac{1}{n+a+4m-1}\sum_{\{k:\ \alpha_k>2m\}}(\lambda_k^a-\lambda^a(2m))c_k^2\\&\le\sum_{\{k:\ \alpha_k>2m\}}\lambda^a_kc_k^2=\lVert\nabla_\theta \phi\rVert_{L^2(\partial B_1,a)}^{2},\end{aligned}
		\end{equation*} therefore $$\begin{aligned}W_{2m}(\zeta)&\le(1-\kappa_{\alpha,2m})W_{2m}(z)=(1-\varepsilon\Theta^{-\beta} \lVert\nabla_\theta \phi\rVert_{L^2(\partial B_1,a)}^{2\beta})W_{2m}(z)\\&\le(1-\varepsilon\Theta^{-\beta} W_{2m}(z)^\beta)W_{2m}(z),\end{aligned} $$ that conclude the proof.
	\end{proof}
	\begin{remark}
		We have proved a stronger version of the logarithmic epiperimetric inequality, that is \begin{equation}\label{stronglog} W_{2m}(\zeta)\le W_{2m}(z)(1-\varepsilon\Theta^{-\beta}\lvert W_{2m}(z)\rvert^\beta)-\varepsilon_1\Theta^{-\beta}\lVert\nabla_\theta \phi\rVert_{L^2(\partial B_1,a)}^{2+2\beta},
		\end{equation} for $\varepsilon _1=\frac{C_2\varepsilon }{2}$, choosing $\varepsilon  <\frac{C_2}{2C_1C_3}$ in \eqref{epilog}.
	\end{remark}
	\section{Epiperimetric inequalities for negative energies}\label{epin}
	In this section we prove two epiperimetric inequalities for negative energies\footnote{Only relevant if $W(z)<0$} $\Ws$ and $\Wm$. 
	
	These epiperimetric inequalities are a generalization for the case $s=\frac12$ and they allow us to prove the backward frequency gap in Proposition \ref{gap}.
	\subsection{Epiperimetric inequality for negative energies $\Ws$}
	In the case $s=\frac12$, the epiperimetric inequality for negative energies $\Ws$ was proved in \cite{car23}. We follow the same idea.
	\begin{theorem}[Epiperimetric inequality for negative energies $\Ws$]\label{thm4} Let $\mathcal{K}_c$ be defined as in \eqref{g} and $z=r^{1+s}c(\theta)\in\mathcal{K}_c$ be the $(1+s)-$homogeneous extension in $\mathbb{R}^{n+1}$ of a function $c\in H^1(\partial B_1,a)$. Then, there is $\zeta\in\mathcal{K}_c$ such that $$\Ws(\zeta)\le (1+\varepsilon)\Ws(z),$$ with $\varepsilon=\frac{1+a}{2n-a+3}.$
	\end{theorem}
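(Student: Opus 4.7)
The approach parallels Theorem \ref{thm1} but in the reverse direction: since the modes of $c$ whose natural homogeneity $\alpha_k$ is strictly less than $1+s$ give negative contributions to $W_{1+s}$, the goal is to exchange such a piece of $z$ for a lower-homogeneity extension, thereby making the Weiss energy more negative. The identity
\[
\varepsilon=\frac{1+a}{2n-a+3}=\frac{1-s}{n+1+s}=-\kappa^a_{2s,\,1+s},
\]
with $\kappa^a_{\alpha,\mu}$ as in \eqref{kappa}, pins down the right substitute homogeneity as $\alpha=2s$. Correspondingly the central object is the $2s$-homogeneous function $|y|^{2s}$, which is $L_a$-harmonic on $\{y\ne 0\}$ thanks to the choice $a=1-2s$ (the computation gives $L_a|y|^{2s}=2s(a+2s-1)|y|^{2s-2}=0$), and whose trace on $\partial B_1$ is $w_0(\theta):=|\theta_{n+1}|^{2s}$.

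First, I would decompose the trace as $c(\theta)=c_0 w_0(\theta)+\tilde c(\theta)$ and define the competitor
\[
\zeta(r,\theta):=c_0\,|y|^{2s}+r^{1+s}\tilde c(\theta).
\]
Admissibility in $\mathcal{K}_c$ is almost automatic: $\zeta$ is even in $y$; it has trace $c$ on $\partial B_1$; and on $B_1'=\{y=0\}$ the correction $c_0|y|^{2s}$ vanishes and $w_0|_{\partial B_1'}=0$, so $\zeta|_{B_1'}=r^{1+s}\tilde c(\theta',0)=r^{1+s}c(\theta',0)\ge 0$.

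Next, I would compute $W_{1+s}(\zeta)-(1+\varepsilon)W_{1+s}(z)$ using Lemma \ref{spherical1}. The piece $c_0 w_0$ is a \emph{single} mode at eigenvalue $\lambda^a(2s)=2sn$, so applying \eqref{seconda} with $\alpha=2s$, $\mu=1+s$ (where the sum on the right collapses because $\lambda^a(\alpha)-\lambda_k^a=0$ in this case), one gets $W_{1+s}(c_0|y|^{2s})=(1+\varepsilon)W_{1+s}(c_0 r^{1+s}w_0)$ exactly. Combined with the bilinear structure of $W_{1+s}$ and the fact that the boundary terms on $\partial B_1$ split by orthogonality, this reduces the inequality to verifying
\[
W_{1+s}(\zeta)-(1+\varepsilon)W_{1+s}(z)=-\varepsilon\,W_{1+s}(r^{1+s}\tilde c)\le 0.
\]

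The main obstacle will be this last step: $\tilde c$ could in principle still contain polynomial eigenmodes with $\alpha_k<1+s$ (constants and linears) for which $W_{1+s}(r^{1+s}\tilde c)$ is negative. To handle this, I would refine the decomposition in the spirit of Theorem \ref{thm1}, splitting $\tilde c$ further into its $L^2(\partial B_1,a)$-projections onto constants and linears plus a remainder with only modes of eigenvalue $\ge\lambda^a(2)$; the constant and linear pieces are extended with their natural $L_a$-harmonic homogeneities (which preserves admissibility on $B_1'$ because these bulk extensions are polynomial), while the high-mode remainder is kept at $r^{1+s}$, where its contribution to $W_{1+s}$ is manifestly non-negative by \eqref{prima}. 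A direct application of \eqref{seconda} in each of these separate modes then gives additional gains that, together with the $|y|^{2s}$-gain, produce the stated $(1+\varepsilon)$-bound.
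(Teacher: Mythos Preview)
Your core idea of replacing $c_0r^{1+s}w_0$ by $c_0|y|^{2s}$ is right, but two genuine gaps remain. First, $w_0=|\theta_{n+1}|^{2s}$ is \emph{not} an eigenfunction of $-L_a^S$: the extension $|y|^{2s}$ is $L_a$-harmonic only away from $\{y=0\}$, and $\mathrm{div}(|y|^a\nabla|y|^{2s})=4s\,\mathcal H^n|_{\{y=0\}}$, so the weak equation on $\partial B_1$ picks up a boundary term on $\partial B_1'$. In particular Lemma~\ref{spherical1} (and \eqref{seconda}) does not apply to the piece $c_0w_0$, and the cross terms between $c_0w_0$ and $\tilde c$ do not vanish by orthogonality. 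The identity $W_{1+s}(c_0|y|^{2s})=(1+\varepsilon)W_{1+s}(c_0r^{1+s}w_0)$ can be recovered, but via \eqref{terza}--\eqref{quarta} using that $-|y|^{2s}$ is a $2s$-homogeneous \emph{solution} of the zero-obstacle problem, not via the eigenfunction calculus.

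Second, your refinement destroys admissibility. Extending the constant and linear parts of $\tilde c$ with their natural homogeneities $0$ and $1$ gives a competitor whose trace on $B_1'$ is $c_{\mathrm{const}}+r\,c_{\mathrm{lin}}(\theta')+r^{1+s}c_{\mathrm{high}}(\theta')$; knowing only that the sum $c_{\mathrm{const}}+c_{\mathrm{lin}}+c_{\mathrm{high}}\ge 0$ on $\partial B_1'$ does not force this to be nonnegative for all $r\in(0,1)$ (e.g.\ $c_{\mathrm{const}}<0$ forces negativity near $r=0$). The paper avoids this by absorbing the \emph{linear} projection of $c$ into $Ch_e^s$ (which is nonnegative on $B_1'$ and satisfies $W_{1+s}(h_e^s)=0$) and the \emph{constant} projection into $c_0u_0$ with $u_0=|y|^{2s}$ (which vanishes on $B_1'$), so that the remainder $\phi$ carries only modes $\ge\lambda^a(2)$ and hence $-\varepsilon W_{1+s}(r^{1+s}\phi)\le 0$. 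The cross term between $u_0$ and $\phi$ is then shown to vanish by a direct computation, while the cross term between $h_e^s$ and $\phi$ reduces to $-4c_sC\varepsilon\int_{B_1'\cap\{x\cdot e<0\}}r^{1+s}\phi\,|x\cdot e|^{1-s}\le 0$, precisely because $h_e^s=u_0=0$ there forces $\phi=c\ge 0$ on that set. This use of $h_e^s$ is the key structural ingredient your approach is missing.
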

	
	\begin{proof} Let $z$ be the $(1+s)-$homogeneous extension of its trace $c\in H^1(\partial B_1,a)$. Then, we can decompose $z$ as $$z(r,\theta)=Cr^{1+s}h^s_e(\theta)+c_0r^{1+s}u_0(\theta)+r^{1+s}\phi(\theta), $$ with $u_0(x,y)=\lvert y\rvert^{2s}$, as in the proof of Theorem \ref{thm1}. Therefore the explicit competitor is $$\zeta(r,\theta)=Cr^{1+s}h^s_e(\theta)+c_0r^{2s}u_0(\theta)+r^{1+s}\phi(\theta),$$
		which is an admissible function, since $\zeta=z\ge0$ on $B'_1$, i.e. $\zeta\in\mathcal{K}_c$.
		
		Now we want to compute the Weiss' energy of $Cr^{1+s}h^s_e(\theta)+c_0r^\alpha u_0(\theta)+r^{1+s}\phi(\theta)$, for $\alpha=2s,{1+s}$. 
		By \eqref{w4}, we have $W_{1+s}(h^s_e)=0$. Then
		\begingroup
		\allowdisplaybreaks
		\begin{align*}
			&W_{1+s}(Cr^{1+s}h^s_e+c_0r^\alpha u_0+r^{1+s}\phi)=C^2W_{1+s}(h^s_e)+W_{1+s}(c_0r^\alpha u_0+r^{1+s}\phi)\\&\qquad+2C\Biggl(\int_{B_1}\nabla (r^{1+s}h^s_e)\cdot \nabla(c_0r^\alpha u_0+r^{1+s}\phi)\y\,dX\\&\qquad-({1+s})\int_{\partial B_1}h^s_e(c_0u_0+\phi)\y\,d\mathcal{H}^n\Biggl) \\&=c_0^2W_{1+s}(r^\alpha u_0)+W_{1+s}(r^{1+s}\phi)\\&\qquad+2c_0\Biggl( \int_{B_1}\nabla (r^\alpha u_0)\cdot \nabla (r^{1+s}\phi)\y\,dX-({1+s})\int_{\partial B_1}u_0\phi\y\,d\mathcal{H}^n\Biggl)\\&\qquad+2C\Biggl( \int_{B_1}\nabla (r^{1+s}h^s_e)\cdot \nabla (r^{1+s}\phi)\y\,dX-({1+s})\int_{\partial B_1}h^s_e\phi\y\,d\mathcal{H}^n\Biggl),
		\end{align*}%
		\endgroup where in the last equality we used that $u_0\equiv$ 0 on $B'_1$, combined with Proposition \eqref{heprop}.
		
		Integrating by parts, we get that  $$\begin{aligned}
			&W_{1+s}(Cr^{1+s}h^s_e+c_0r^\alpha u_0+r^{1+s}\phi)=c_0^2W_{1+s}(r^\alpha u_0)+W_{1+s}(r^{1+s}\phi)\\&\qquad+2c_0\Biggl( \int_{B_1}\nabla (r^\alpha u_0)\cdot \nabla (r^{1+s}\phi)\y\,dX-({1+s})\int_{\partial B_1}u_0\phi\y\,d\mathcal{H}^n\Biggl)\\&\qquad+2C\Biggl(\int_{B_1} -L_a (r^{1+s}h_e^s) r^{1+s}\phi\y \,dX\Biggl).
		\end{aligned}$$
		Then
		
		\begin{equation}\label{ijkl}W_{1+s}(\zeta)- (1+\varepsilon)W_{1+s}(z)=I+J+K+L,
		\end{equation} where $$I=c_0^2\left(W_{1+s}(r^{2s} u_0)-(1+\varepsilon)W_{1+s}(r^{1+s} u_0)\right),$$ $$J=W_{1+s}(r^{1+s}\phi)-(1+\varepsilon)W_{1+s}(r^{1+s}\phi),$$ $$\begin{aligned}K&=2c_0\Biggl( \int_{B_1}\nabla (r u_0)\cdot \nabla (r^{1+s}\phi)\y\,dX-({1+s})\int_{\partial B_1}u_0\phi\y\,d\mathcal{H}^n\\&-(1+\varepsilon)\Biggl(\int_{B_1}\nabla (r^{1+s} u_0)\cdot \nabla (r^{1+s}\phi)\y\,dX-({1+s})\int_{\partial B_1}u_0\phi\y\,d\mathcal{H}^n\Biggl)\Biggl)
		\end{aligned}$$ and $$\begin{aligned}L&=2C\Biggl(\int_{B_1} -L_a (r^{1+s}h_e^s) r^{1+s}\phi \y\,dX\\&\qquad-(1+\varepsilon)\Biggl(\int_{B_1} -L_a (r^{1+s}h_e^s) r^{1+s}\phi \y\,dX\Biggl)\Biggl).
		\end{aligned}$$
		
		For $I$, we notice that the function $-r^{2s}u_0(\theta)=-\lvert y \rvert^{2s} $ is a solution of \eqref{fract4} (with 0 obstacle), then using \eqref{terza}, we obtain $$W_{1+s}(r^{2s}u_0)=W_{1+s}(-r^{{1+s}-(1-s)}u_0)=-(1-s)\lVert u_0\rVert_{L^2(\partial B_1,a)}^2$$ and using \eqref{quarta}, we get \bea W_{1+s}(r^{1+s}u_0)&=\left(1+\frac{-(1-s)}{n+2}\right)W_{1+s}(r^{2s}u_0)\\&=\left(1+\frac{-(1-s)}{n+2}\right)\left(-(1-s)\right)\lVert u_0\rVert_{L^2(\partial B_1,a)}^2,\eea therefore $$I=\left(-(1-s)-\left(1+\varepsilon\right)\left(1+\frac{-(1-s)}{n+2}\right)\left(-(1-s)\right)\right)\lVert u_0\rVert_{L^2(\partial B_1,a)}^2=0,$$ since $\varepsilon =\frac{1+a}{2n-a+3}$, with a simple calculation.
		
		For $J$, using \eqref{prima}, we deduce that $$J=-\varepsilon W_{1+s}(r^{1+s}\phi)=-\frac\varepsilon{n+2}\sum_{\{k:\ \lambda^a_k\ge \lambda^a(2)\}}\left(\lambda^a_k-\lambda^a\left({1+s}\right)\right)c_k^2\le0,$$ since $\lambda^a(2)\ge\lambda^a({1+s})$.
		
		For $K$, we integrate by parts  
		\begingroup
		\allowdisplaybreaks
		\begin{align*}&K=2c_0\Biggl(\int_{B_1}\nabla (r^{2s} u_0)\cdot \nabla (r^{1+s}\phi)\y\,dX-2s\int_{\partial B_1}u_0\phi\y\,d\mathcal{H}^n\\&\qquad-(1-s)\int_{\partial B_1}u_0\phi\y\,d\mathcal{H}^n-(1+\varepsilon)\Biggl(\int_{B_1}\nabla (r^{1+s} u_0)\cdot \nabla (r^{1+s}\phi)\y\,dX\\&\qquad-({1+s})\int_{\partial B_1}u_0\phi\y\,d\mathcal{H}^n\Biggl)\Biggl)
			\\&=2c_0\Biggl( \int_{B_1}-L_a(r^{2s} u_0)r^{1+s}\phi\y\,dX-(1-s)\int_{\partial B_1}u_0\phi\,d\mathcal{H}^n\\&\qquad-(1+\varepsilon)\Biggl(\int_{B_1}-L_a (r^{1+s} u_0)r^{1+s}\phi\y\,dX\Biggl)\Biggl).\end{align*}
		\endgroup
		
		Now, by \eqref{sphh}, we get $$L_a (r^\alpha u_0)\y=\lambda^a(\alpha)r^{\alpha-2}u_0\y+r^{\alpha-2}L_a^S u_0\y,$$ with $\lambda^a(2s)=2ns$ and $\lambda^a({1+s})=(1+s)n+(1+s)(1-s)$, then
		\begingroup
		\allowdisplaybreaks
		\begin{align*} K&=2c_0\Biggl( \int_{B_1}-2nsr^{-2+2s}u_0r^{1+s}\phi\y\,dX-\int_{B_1}r^{-2+2s}L_a^S u_0r^{1+s}\phi\y\,dX\\&\qquad-(1-s)\int_{\partial B_1}u_0\phi\y\,d\mathcal{H}^n\\&\qquad-(1+\varepsilon)\Biggl(\int_{B_1}-\left(({1+s})n+(1+s)(1-s)\right)r^{-1+s}u_0r^{1+s}\phi\,dX\\&\qquad-\int_{B_1}r^{-1+s}L_a^S u_0r^{1+s}\phi\y\,dX\Biggl)\Biggl)\\&=2c_0\Biggl( -\frac{2ns}{n+{3s+a}}\int_{\partial B_1}u_0\phi\y\,d\mathcal{H}^n-\frac{1}{n+{3s+a}}\int_{\partial B_1}L_a^S u_0\phi\y\,d\mathcal{H}^n\\&\qquad-(1-s)\int_{\partial B_1}u_0\phi\y\,d\mathcal{H}^n\\&\qquad-(1+\varepsilon)\Biggl(-\frac{({1+s})n+(1+s)(1-s)}{n+2}\int_{\partial B_1}u_0\phi\,d\mathcal{H}^n\\&\qquad-\frac{1}{n+2}\int_{\partial B_1}L_a^S u_0\phi\y\,d\mathcal{H}^n\Biggl)\Biggl),
		\end{align*}
		\endgroup 
		where in the last equality we used \eqref{w0}.
		Hence, we obtain
		$$\begin{aligned}K&=2c_0\Biggl( -\frac{2ns}{n+{3s+a}}-(1-s)\\&+\left(1+\varepsilon\right)\frac{({1+s})n+(1+s)(1-s)}{n+2}\Biggl) \int_{\partial B_1}u_0\phi\y\,d\mathcal{H}^n \\&+2c_0\Biggl(-\frac{1}{n+{3s+a}}+\left(1+\varepsilon\right)\left(\frac{1}{n+2}\right)\Biggl)\int_{\partial B_1}L_a^S u_0\phi\y\,d\mathcal{H}^n=0,
		\end{aligned}$$ since $\varepsilon =\frac{1+a}{2n-a+3}$, with a simple calculation. 
		
		For $L$, by Proposition \eqref{heprop}, we have \bea L&=-2C\varepsilon\left( \int_{B_1} -L_a(r^{1+s}h_e^s)r^{1+s} \phi\y\,dX\right)\\&=-4c_sC\varepsilon\int_{B'_1\cap\{x\cdot e<0\}}r^{1+s}\phi \lvert x\cdot e\rvert^{1-s}\y\,d\mathcal{H}^n\le0,\eea where we used that $h_e^s(\theta)=u_0(\theta)=0$ for $\theta\in B'_1\cap\{x\cdot e<0\}$ and that $\phi=c\ge0$ in $B'_1\cap\{x\cdot e<0\}$.
		
		Finally, since $I,J,K,L\le0$, we conclude by using \eqref{ijkl}.	
	\end{proof}
	\subsection{Epiperimetric inequality for negative energies $\Wm$}
	In the case $s=\frac12$, the epiperimetric inequality for negative energies $\Wm$ was proved in \cite{csv17}. We follow the same idea.
	\begin{theorem}[Epiperimetric inequality for negative energies $\Wm$]\label{thm5} Let $\mathcal{K}_c$ be defined as in \eqref{g} and let $z=r^{2m}c(\theta)\in\mathcal{K}_c$ be the $2m-$homogeneous extension in $\mathbb{R}^{n+1}$ of a function $c\in H^1(\partial B_1,a)$ such that $\norm c_{L^2(\partial B_1,a)}^2\le1$. Then, there is $\zeta\in\mathcal{K}_c$ such that $$\Wm(\zeta)\le (1+\varepsilon)\Wm(z),$$ with $\varepsilon=\eps(n,m,a)>0$ small enough.
	\end{theorem}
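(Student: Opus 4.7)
The strategy parallels Theorem \ref{thm4} and the corresponding result in \cite{csv17} for $s=\tfrac{1}{2}$: decompose $c$ into eigenfunctions of $-L_a^S$, extend the ``low-frequency'' part with its natural lower-homogeneity $L_a$-harmonic extension---which has strictly more negative $W_{2m}$ by \eqref{quarta}---and add a corrector built from the polynomial $h_{2m}$ of Remark \ref{h2m} to restore admissibility. Here $h_{2m}$ plays the role that $u_0=|y|^{2s}$ played in Theorem \ref{thm4}, since $h_{2m}\equiv 1$ on $\partial B_1'$.

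Write $c=\sum_k c_k\phi_k$ and split $c=P+\phi$ with $P=\sum_{\alpha_k<2m}c_k\phi_k$ and $\phi=\sum_{\alpha_k\ge 2m}c_k\phi_k$. The proposed competitor is
\[
\zeta(r,\theta)=\sum_{\alpha_k<2m} c_k\,r^{\alpha_k}\phi_k(\theta)+r^{2m}\phi(\theta)+M\bigl(h_{2m}(\theta)-r^{2m}h_{2m}(\theta)\bigr),
\]
where in the parenthesis $h_{2m}(\theta)$ denotes the $0$-homogeneous extension of the boundary trace of $h_{2m}$ (so the corrector has zero trace on $\partial B_1$) and $M\ge 0$ is to be chosen. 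The trace $\zeta|_{\partial B_1}=c$ holds by construction. On $B_1'$, since $h_{2m}\equiv 1$ there, the corrector contributes $M(1-r^{2m})\ge 0$; meanwhile the low-mode piece differs from $r^{2m}P|_{B_1'}$ by $\sum_{\alpha_k<2m}c_k(r^{\alpha_k}-r^{2m})\phi_k(\theta')$, whose pointwise negative part is controlled by $C\|P\|_{L^2(\partial B_1,a)}$ with $C=C(n,m,a)$ (from the uniform $L^\infty(\partial B_1')$-norms of the finitely many low-mode eigenfunctions and Cauchy--Schwarz). Taking $M=C\|P\|_{L^2(\partial B_1,a)}$ ensures $\zeta\ge 0$ on $B_1'$, so $\zeta\in\mathcal{K}_c$.

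The Weiss' energy of $\zeta$ splits by orthogonality of distinct eigenmodes in $L^2(\partial B_1,a)$ together with the integration-by-parts cancellation from Theorem \ref{thm4} showing that the cross term between $Mh_{2m}(\theta)$ and $-Mr^{2m}h_{2m}(\theta)$ vanishes (since $r^{2m}h_{2m}$ is $L_a$-harmonic, and the $0$-homogeneous piece lies in an eigenmode orthogonal to all modes with $\alpha_k\ne 2m$). The mode-by-mode gain
\[
|W_{2m}(r^{\alpha_k}\phi_k)|-|W_{2m}(r^{2m}\phi_k)|=\frac{(2m-\alpha_k)^2}{n+a+4m-1}c_k^2,
\]
computed from the direct formula for $W_\mu(r^\alpha\phi)$ together with \eqref{prima}, summed over low modes is bounded below by $\delta'\|P\|_{L^2(\partial B_1,a)}^2$ for some $\delta'=\delta'(n,m,a)>0$; the corrector contributes exactly $C_0M^2$ with $C_0=4m^2\|h_{2m}\|_{L^2(\partial B_1,a)}^2/(n+a-1)$, by applying \eqref{seconda} to the single eigenmode supporting $h_{2m}$. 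All together, using $|W_{2m}(z)|\le 2m\,\|P\|_{L^2(\partial B_1,a)}^2$ when $W_{2m}(z)<0$,
\[
W_{2m}(\zeta)-(1+\varepsilon)W_{2m}(z)\le -(\delta'-C_0C^2)\|P\|_{L^2(\partial B_1,a)}^2-\varepsilon W_{2m}(r^{2m}\phi)+2m\varepsilon\,\|P\|_{L^2(\partial B_1,a)}^2,
\]
and choosing $\varepsilon=\varepsilon(n,m,a)$ sufficiently small (after checking $\delta'>C_0C^2$) makes the right-hand side non-positive, using also $W_{2m}(r^{2m}\phi)\ge 0$.

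The main obstacle is precisely the interplay $\delta'>C_0C^2$: the corrector is needed to restore admissibility $\zeta\ge 0$ on $B_1'$ but introduces an $O(M^2)$ positive contribution which must be dominated by the negativity gain of the low-mode extension. The structural reason this can be arranged is that the negative part of $P|_{\partial B_1'}$ forcing the corrector and the negative mass $|W_{2m}(r^{2m}P)|$ giving the gain are both controlled by the same quantity $\|P\|_{L^2(\partial B_1,a)}^2$, via a finite-dimensional Poincar\'e-type estimate on the span of the low modes. The hypothesis $\|c\|_{L^2(\partial B_1,a)}^2\le 1$ keeps $M$ and all constants uniform in $n$, $m$, $a$.
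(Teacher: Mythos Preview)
Your proposal has a genuine gap at the very point you flag: the inequality $\delta'>C_0C^2$ is asserted but not proved, and in fact it is typically false. With your choice of extending each low mode by its natural homogeneity $\alpha_k$, the per-mode gain is $\frac{(2m-\alpha_k)^2}{n+a+4m-1}c_k^2$, so $\delta'=\frac{1}{n+a+4m-1}$ (attained at $\alpha_k=2m-1$). On the other hand $C_0=\frac{4m^2\lVert h_{2m}\rVert^2_{L^2(\partial B_1,a)}}{n+a-1}$ and $C$ is a fixed finite-dimensional $L^\infty$--$L^2$ constant; for large $m$ alone one already has $\delta'\sim(4m)^{-1}$ while $C_0$ grows like $m^2$, so the required inequality fails. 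The structural observation that both sides scale like $\lVert P\rVert^2$ does not help: the issue is a comparison of two \emph{fixed} dimensional constants, not uniformity in $c$. A secondary problem is that the $0$-homogeneous extension $r^0h_{2m}(\theta)$ need not lie in $H^1(B_1,a)$: the radial integral $\int_0^1 r^{n+a-2}\,dr$ diverges whenever $n+a\le 1$, so your competitor is not even admissible in that range.

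The paper's proof avoids both difficulties by a different competitor: instead of sending each low mode to homogeneity $\alpha_k$, it extends \emph{all} low modes together (and the corrector $Mh_{2m}$) with a \emph{single} homogeneity $\alpha\in(2m-\tfrac12,2m)$, and sets $\varepsilon=\kappa^a_{2m,\alpha}$. The crucial effect is that the corrector's positive contribution becomes of order $\varepsilon^2$ (it appears through the factor $\lambda^a(2m)-\lambda^a(\alpha)=\varepsilon(\alpha+2m+n+a-1)$ multiplied by the $\kappa^a_{2m,\alpha}=\varepsilon$ already present in \eqref{seconda}), while the negativity gain from the low modes is of order $\varepsilon$. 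Then the bound $M^2\le C_0\lVert \overline P\rVert^2$ (your same finite-dimensional estimate) suffices, and choosing $\varepsilon$ small makes the $\varepsilon^2$ term harmless. In short: the paper buys smallness by taking $\alpha$ close to $2m$, whereas your choice $\alpha=\alpha_k$ is fixed and leaves no free parameter to absorb the corrector cost.
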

	\begin{proof}Notice that we can suppose $W_{2m}(z)<0$, otherwise we have done.
		
		We can decompose $c\in H^1(\partial B_1,a)$ as $$c=\sum_{k=1}^\infty c_k\phi_k(\theta)=\overline P(\theta)+\overline \phi(\theta),$$ where $\phi_k(\theta)$ are as above, with $$\overline P(\theta)=\sum_{\{k:\ \alpha_k<2m\}}c_k \phi_k(\theta)$$ and $$\overline \phi(\theta)=\sum_{\{k:\ \alpha_k\ge2m\}}c_k \phi_k(\theta).$$ 
		
		We decompose $z=r^{2m}c$, the $2m-$homogeneous extension of $c$, as $$z(r,\theta)=r^{2m}\overline P(\theta)+Mr^{2m}h_{2m}(\theta)-Mr^{2m}h_{2m}(\theta)+r^{2m}\overline \phi(\theta),$$ where $h_{2m}$ as in Remark \ref{h2m} and $M=\max\{P_-(\theta): \theta\in\partial B_1'\}$ as above.
		
		We define $\varepsilon=\kappa^a_{2m,\alpha}$ with $\kappa^a_{2m,\alpha}$ as in $\eqref{kappa}$ and $\alpha\in(2m-\frac12,2m)$ and we will choose $\varepsilon =\varepsilon (n,m,a)>0$ small enough (which corresponds to choosing $\alpha$ close to $2m$ and $\alpha<2m$).
		
		The explicit competitor is $$\zeta(r,\theta)=r^{\alpha}\overline P(\theta)+Mr^{\alpha}h_{2m}(\theta)-Mr^{2m}h_{2m}(\theta)+r^{2m}\overline\phi(\theta),$$
		where we notice that $\zeta\in\mathcal{K}_c$ since $\zeta(r,\theta)\ge r^{2m} c(\theta)\ge0$ on $B'_1$.
		
		As above, we can define
		$$\psi_\mu(r,\theta)=\sum_{\{k:\ \alpha_k<2m\}} c_k r^{\mu} \phi_k(\theta)+Mr^{\mu}h_{2m}(\theta),$$ $$ H_{2m}(r,\theta)=-Mr^{2m} h_{2m}(\theta)+\sum_{\{k:\ \alpha_k=2m\}} c_k r^{2m} \phi_k(\theta)$$ and $$\varphi(r,\theta)=\sum_{\{k:\ \alpha_k>2m\}} c_k r^{2m} \phi_k(\theta), $$
		and we write $z=\psi_{2m}+H_{2m}+\varphi$ and $\zeta=\psi_\alpha+H_{2m}+\varphi$. 
		
		Moreover, since the functions $\psi_\mu$ are orthogonal to $\varphi$ in $L^2( B_1,a)$ and in $H^1( B_1,a)$, since $H_{2m}$ is $L_a-$harmonic and $2m-$homogeneous, and since $W_{2m}(\varphi)>0$ by \eqref{prima}, we have that  
		$$\begin{aligned}
			W_{2m}(\zeta)-(1+&\varepsilon)W_{2m}(z)=W_{2m}(\zeta)-(1+\kappa^a_{2m,\alpha})W_{2m}(z) \\&=-\kappa^a_{2m,\alpha}W_{2m}(\varphi)+W_{2m}(\psi_\alpha)-(1+\kappa^a_{2m,\alpha})W_{2m}(\psi_{2m})\\&\le W_{2m}(\psi_\alpha)-(1+\kappa^a_{2m,\alpha})W_{2m}(\psi_{2m})\\&=M^2\lVert h_{2m}\rVert_{L^2(\partial B_1,a)}^2 \frac{\kappa^a_{2m,\alpha}}{n+a+2\alpha-1}(\lambda^a(2m)-\lambda^a(\alpha))\\&\qquad-\frac{\kappa^a_{2m,\alpha}}{n+a+2\alpha-1}\sum_{\{k:\ \alpha_k<2m\}}(\lambda^a(\alpha)-\lambda^a_k)c_k^2\\&=M^2\lVert h_{2m}\rVert_{L^2(\partial B_1,a)}^2(\kappa^a_{\alpha,2m})^2\frac{(\alpha+2m+n+a-1)^2}{n+a+2\alpha-1} \\&\qquad-\frac{\kappa^a_{\alpha,2m}}{n+a+2\alpha-1}\sum_{\{k:\ \alpha_k<2m\}}(\lambda^a(\alpha)-\lambda^a_k)c_k^2\end{aligned}$$ where in the second equality we used \eqref{seconda}.
		
		Furthermore, if $C_1=\lambda^a(2m-\frac12)-\lambda^a(2m-1)$, then $$\sum_{\{k:\ \alpha_k<2m\}}(\lambda^a(\alpha)-\lambda^a_k)c_k^2 \ge C_1 \sum_{\{k:\ \alpha_k<2m\}}c_k^2=C_1\lVert\overline P\rVert_{L^2(\partial B_1,a)}^2\ge\frac{C_1}{C_0}M^2$$ where the last inequality follows by $$\begin{aligned}M^2&\le\left(\sum_{\{k:\ \alpha_k<2m\}}\lvert c_k \rvert \lVert\phi_k \rVert_{L^\infty(\partial B_1)}\right)^2\le C_0\sum_{\{k:\ \alpha_k<2m\}} \lvert c_k\rvert ^2 \\&=C_0\lVert\overline P\rVert_{L^2(\partial B_1,a)}^2,
		\end{aligned}$$ where we used the Cauchy-Schwartz inequality and that all norms in a finite dimensional space are equivalent.
		
		We deduce that  $$\begin{aligned}
			W_{2m}&(\zeta)-(1+\varepsilon)W_{2m}(z)\\&=M^2\lVert h_{2m}\rVert_{L^2(\partial B_1,a)}^2\varepsilon^2\frac{(\alpha+2m+n+a-1)^2}{n+a+2\alpha-1}-\frac{C_1}{C_0}\frac{ M^2\varepsilon}{n+a+2\alpha-1}\\&\le\frac{M^2\varepsilon}{n+a+2\alpha-1}\left(\lVert h_{2m}\rVert_{L^2(\partial B_1,a)}^2 (4m+n)^2\varepsilon -\frac{C_1}{C_0} \right)<0 ,\end{aligned}$$ where we have chosen $\varepsilon=\varepsilon(n,m,a)>0$ small enough.
	\end{proof}
	\section{Frequency gap}\label{fgap}
	Once we have proved the epiperimetric inequalities in Theorems \ref{thm1}, \ref{thm2}, \ref{thm4} and \ref{thm5}, the proof of the frequency gap is standard, as done in \cite{csv17}.
	
	First notice that without loss of generality we can consider a $\lambda-$homogeneous solution of \eqref{fract4} (with 0 obstacle). In fact, let $v=u^{x_0}$ be a $\lambda-$homogeneous solution of \eqref{fract5} with obstacle $\vf\in C^{k,\gamma}(\R^n)$, $k\ge2$ and $\gamma\in(0,1)$. If $\lambda< k+\gamma$, then $\Phi^{x_0}(0^+,v)=n+a+2\lambda$ by Remark \ref{for}. Thus
	we can consider $z$, the blow-up of $v$ at $x_0\in\Gamma(u)$, that is a solution of \eqref{fract4} (with 0 obstacle). 
	
	\begin{proof}[Proof of Proposition \ref{gap}]
		\textbf{Step 1.} To prove the frequency gap around $1+s$, it is sufficient to check that if $\lambda=1+s+t$ with $t>0$, then $t\ge 1-s$, while if $t<0$, then $t\le-(1-s)$.
		
		Let $c\in H^1(\partial B_1,a)$ be a trace of a $(1+s+t)-$homogeneous solution, say $r^{{1+s}+t}c(\theta)$.
		\begin{enumerate}
			\item
			If $t>0$, then $W_{1+s}(r^{{1+s}+t}c)=t\norm c^2\ab>0,$ by \eqref{terza}. Thus, using the epiperimetric inequality for $\Ws$ (Theorem \ref{thm1}), we obtain
			$$\begin{aligned}0<W_{1+s}(r^{{1+s}+t}c)&\le \left(1-\frac{1+a}{2n+a+5}\right)W_{1+s}(r^{{1+s}}c)\\&=\left(1-\frac{1+a}{2n+a+5}\right)\left(1+\frac{t}{n+2}\right) W_{1+s}(r^{{1+s}+t}c),
			\end{aligned}$$ where in the last equality we used \eqref{quarta}.
			
			Therefore we deduce that
			$$\left(1-\frac{1+a}{2n+a+5}\right)\left(1+\frac{t}{n+2}\right)\ge1,$$ 
			which implies that $t\ge1-s$.
			\item					
			If $t<0$, then $W_{{1+s}}(r^{{1+s}+t}c)=t\norm c_{L^2(\partial B_1,a)}^2<0 $ by \eqref{terza}. Thus, using the epiperimetric inequality for negative energies $\Ws$, i.e. Theorem \ref{thm4}, we obtain $$\begin{aligned}W_{{1+s}}(r^{{1+s}+t}c)&\le \left(1+\frac{1+a}{2n-a+3}\right)W_{{1+s}}(r^{{1+s}}c)\\&=\left(1+\frac{1+a}{2n-a+3} \right)\left(1+\frac{t}{n+2}\right)W_{{1+s}}(r^{{1+s}+t}c),
			\end{aligned}$$ where we used \eqref{quarta}.
			
			Hence, since we have a negative energy, we get $$\left(1+\frac{1+a}{2n-a+3}\right)\left(1+\frac{t}{n+2}\right)\le 1, $$ which gives $t\le- ({1+s})$.
		\end{enumerate}
		\textbf{Step 2.}	
		Let $c\in H^1(\partial B_1,a)$ be a trace of a $(2m+t)-$homogeneous solution, say $r^{2m+t}c(\theta)$, with $\norm c_{L^2(\partial B_1,a)}^2=1$. It is sufficient to check that if $t>0$, then $t\ge c_m^+$, for some $c_m^+>0$, while if $t<0$, then $t\le- c_m^-$, for some $c_m^->0$.
		
		\begin{enumerate}\item Let $t>0$, then \begin{equation}\label{subito}0<t=W_{2m}(r^{2m+t}c)\le W_{2m}(r^{2m}c),
			\end{equation} by \eqref{terza}.
			
			Using the logarithmic epiperimetric inequality for $\Wm$, i.e. Theorem \ref{thm2}, and \eqref{subito}, we obtain
			$$\begin{aligned}0<W_{2m}(r^{2m+t}c)&\le W_{2m}(\zeta)\le (1-\varepsilon W_{2m}^\beta(r^{2m}c))W_{2m}(r^{2m}c)\\&\le (1-\varepsilon t^\beta)W_{2m}(r^{2m}c)\\&=(1-\varepsilon t^\beta)\left(1+\frac{t}{n+a+4m-1}\right)W_{2m}(r^{2m+t}c),
			\end{aligned}$$ where in the last equality we used \eqref{quarta}. 
			Therefore, since we have a negative energy, we have $$(1-\varepsilon t^\beta)\left(1+\frac{t}{n+a+4m-1}\right)\ge 1, $$ which gives that  $t\ge c_m^+$, for some explicit constant $c_m^+>0$.
			
			\item If $t<0$ then $ W_{2m}(r^{2m+t}c)=t<0 $ by \eqref{terza}, therefore using the epiperimetric inequality for negative energies $\Wm$, i.e. Theorem \ref{thm5}, we obtain $$\begin{aligned}W_{2m}(r^{2m+t}c)&\le (1+\varepsilon)W_{2m}(r^{2m}c)\\&=(1+\varepsilon )\left(1+\frac{t}{n+a+4m-1}\right)W_{2m}(r^{2m+t}c),
			\end{aligned}$$ where in the last equality we used \eqref{quarta}. 
			
			Thus we get $$(1+\varepsilon )\left(1+\frac{t}{n+a+4m-1}\right)\le 1, $$ which is $t\le -c_m^-$ for some explicit constant $c_m^->0.$
		\end{enumerate}
	\end{proof}
	\section{Characterization of blow-ups}\label{char}
	The epiperimetric inequality approach allows us to give an alternative proof of the characterization of blow-ups, in the spirit of \cite{csv17}. For the original proof we refer to \cite{css08} and \cite{gr17}.
	\begin{proof}[Proof of Proposition \ref{cha}]\textbf{Step 1.} We want to prove that if $z$ is a solution of \eqref{fract4} (with 0 obstacle) and it is $(1+s)-$homogeneous, then $z=Ch_e^s$, $e\in\partial B'_1$ and $C\ge0$.
		
		In fact, suppose that $c\in H^1(\partial B_1,a)$ is the trace of $z$ and let $\zeta$ be the competitor in the epiperimetric inequality for $\Ws$. Then $W_{1+s}(z)=0$, by Proposition \ref{heprop}. Therefore $$0=W_{1+s}(z)\le W_{1+s}(\zeta)\le (1-\kappa)W_{1+s}(z)=0,$$ i.e. the epiperimetric inequality is an equality. 
		Thus, by Remark \ref{epi2}, $c(\theta)=Ch_e^s(\theta)+\phi(\theta)$, for some $e\in\partial B'_1$, $C\ge0$ and $\phi$ eigenfunction corresponding to eigenvalue $\lambda^a(2)$ with $\phi\ge 0$ on $B'_1$. 
		
		Hence, using Proposition \ref{heprop} with an integration by parts, we get $$\begin{aligned}0&=W_{1+s}(z)=W_{1+s}(Cr^{1+s}h_e^s+r^{1+s}\phi)=W_{1+s}(r^{1+s}h_e^s)+W_{1+s}(r^{1+s}\phi)\\&\qquad+2\left(\int_{B_1}\nabla h_e^s \cdot \nabla (r^{1+s}\phi) \y\,dX-({1+s}) \int_{\partial B_1} h_e^s\phi\y\,d\mathcal{H}^n\right)\\&\ge W_{1+s}(r^{1+s}h_e^s)+W_{1+s}(r^{1+s}\phi)=W_{1+s}(r^{1+s}\phi)\ge0,\end{aligned}$$ where we used \eqref{prima} and \eqref{w4}. In particular the last inequality is actually an equality. Then, by \eqref{prima}, we get that $\phi\equiv0$, i.e. $z=Ch_e^s$ for some $C\ge0$. 
		
		\textbf{Step 2.}
		Suppose that $z$ is a solution of \eqref{fract4} (with 0 obstacle) and that $z$ is $2m-$homogeneous. Then we claim that $z=p_{2m}$ for some polynomial $p_{2m}$ which is $L_a-$harmonic.
		
		Let $c\in H^1(\partial B_1,a)$ be the trace of $z$ and let $\zeta$ be the competitor in the logarithmic epiperimetric inequality for $\Wm$. Without loss of generality, we can suppose $\lVert c\rVert_{L^2(\partial B_1,a)}=1$. Hence, by the strong version of the log-epiperimetric inequality \ref{stronglog}, we have $$\begin{aligned} 0&=W_{2m}(z)\le W_{2m}(\zeta)\le W_{2m}(z)(1-\varepsilon\lvert W_{2m}(z)\rvert^\beta)-\varepsilon_1\lVert\nabla_\theta \phi\rVert_{L^2(\partial B_1,a)}^{2+2\beta}\\&=-\varepsilon_1\lVert\nabla_\theta \phi\rVert_{L^2(\partial B_1,a)}^{2+2\beta}
		\end{aligned}$$ i.e. $\lVert\nabla_\theta \phi\rVert_{L^2(\partial B_1,a)}=0$. 
		
		Thus $\phi\equiv0$ and $c$ contains only eigenfunctions corresponding to eigenvalues $\lambda^a_k\le\lambda^a(2m)$, i.e. $$c(\theta)=\sum_{\{k:\ \alpha_k\le2m\}}c_k\phi_k.$$
		
		Using \eqref{prima}, we obtain $$0=W_{2m}(z)=\frac{1}{n+4m-1}\sum_{\{k:\ \alpha_k\le2m\}}(\lambda^a_k-\lambda^a(2m))c_k^2\le0,$$ i.e. the frequencies $\alpha_k<2m$ must vanish. Therefore $c$ is an eigenfunction corresponding to eigenvalue $\lambda^a(2m)$ and it follows that the homogeneous extension $z$ is a $2m-$homogeneous $L_a-$harmonic polynomial. 
	\end{proof}
	\section{Regularity of $\Gamma_{1+s}(u)$ and structure of $\Gamma_{2m}(u)$}\label{final}
	We conclude this paper with the most important application of the epiperimetric inequalities in Theorem \ref{thm1} and Theorem \ref{thm2}, i.e. the proof of Theorem \ref{thm3}.
	The proof following a standard argument, for instance see \cite{gpps17} for the case $s\in(0,1)$ or \cite{fs16}, \cite{gps16}, \cite{csv17}, for the case $s=\frac12.$
	
	In this last section, we recall the results from \cite{gpps17}, where the claim (1) was proved in the case $s\in(\frac12,1)$.
	The regularity assumption for $\vf$, that is $\vf\in C^{k,\gamma}(\R^n)$, allows us to generalize the result to any $s\in(0,1)$ by using the same argument. 
	
	Indeed, in \cite{gpps17} it was proved that the function $r\mapsto\W^{x_0}_{\lambda}(r,v)+Cr^{2s-1}$ is increasing (with a slight difference in the definition of $v$), where $2s-1>0$. We notice that in the case $\vf\in C^{k,\gamma}(\R^n)$, we have that the function $r\mapsto\W^{x_0}_\lambda(r,v)+Cr^{k+\gamma-\lambda}$ is increasing, with $k+\gamma-\lambda>0$, by \eqref{monoW}, with $v=u^{x_0}$ the solution of \eqref{fract5}.
	
	For the case (2), we give the complete proof, which is based on similar arguments and uses ideas from \cite{csv17} and \cite{gp09}. Notice that in the case $\lambda=2m$, the condition $\lambda< k+\gamma$ become $\lambda=2m\le k$ since $2m$ and $k$ are integers.
	\subsection{Decay of the Weiss' energy $\W$} The first result, that follows by the epiperimetric inequalities, is the decay of the Weiss' energy $\W$ for obstacle $\vf\not\equiv 0.$ 
	
	\begin{proposition}[Decay of the Weiss' energy $\W$]\label{dec} Let $u$ be a solution of \eqref{fract2} and $v=u^{x_0}$ be the solution of \eqref{fract5} with $x_0\in\Gamma(u)$, $\vf\in C^{k,\gamma}(\R^n)$, $k\ge2$ and $\gamma\in(0,1)$. Let $K\subset\Gamma_\lambda(u)\cap \R^{n+1}$ be a compact set. \begin{enumerate}
			\item If $\lambda=1+s$, then there is $\alpha\in(0,1)$ such that\be\label{wsdecay}\W^{x_0}_{1+s}(r,v)\le Cr^\alpha  \ee for all $x_0\in\Gamma_{1+s}(u)\cap K$ and for all $r\in(0,r_0)$, for some $r_0>0$.
			\item If $\lambda=2m\le k$ and $\beta\in(0,1)$ is the constant in Theorem \ref{thm2}, then\be\label{wmdecay} \W^{x_0}_{2m}(r,v)\le C(- \log(r)^{-\frac1\beta})\ee for all $x_0\in\Gamma_{2m}(u)\cap K$ and for all $r\in(0,r_0)$, for some $r_0>0$.
		\end{enumerate}
		\begin{proof} Using \eqref{h'}, we obtain
			\begingroup
			\allowdisplaybreaks
			\begin{align*} \frac{d}{dr}&\W^{x_0}_\lambda(r,v)=-\frac{n+a+2\lambda-1}{r^{n+a+2\lambda}}\mathcal{I}(r)+\frac{\mathcal{I}'(r)}{r^{n+a+2\lambda-1}}+\lambda\frac{n+a+2\lambda}{r^{n+a+2\lambda+1}}H(r)\\&\qquad-\frac{\lambda}{r^{n+a+2\lambda}}H'(r)\\&=-\frac{n+a+2\lambda-1}{r}\W^{x_0}_\lambda(r,v)+\frac{\lambda}{r^{n+a+2\lambda+1}}H(r)+\frac{\mathcal{I}'(r)}{r^{n+a+2\lambda-1}}\\&\qquad-\frac{\lambda}{r^{n+a+2\lambda}}H'(r)\\&=-\frac{n+a+2\lambda-1}{r}\W^{x_0}_\lambda(r,v)+\frac{1}{r^{n+a+2\lambda-1}}\int_{\partial B_r(x_0)} \lvert \nabla v\rvert^2\y\,d\HH^n\\&\qquad+\frac{1}{r^{n+a+2\lambda-1}}\int_{\partial B_r(x_0)} vh\,d\HH^n-\frac{2\lambda}{r^{n+a+2\lambda}}\int_{\partial B_r(x_0)} v\partial_{\nu}v\y\,d\HH^n\\&\qquad-\frac{\lambda(n+a-1)}{r^{n+a+2\lambda+1}}H(r)\\&\ge-\frac{n+a+2\lambda-1}{r}\W^{x_0}_\lambda(r,v)+\frac{1}{r^{n+a+2\lambda-1}}\int_{\partial B_r(x_0)} (\partial_{\nu}v)^2\y\,d\HH^n\\&\qquad+\frac{1}{r^{n+a+2\lambda-1}}\int_{\partial B_r(x_0)}\lvert \nabla_\theta v \rvert^2\y\,d\HH^n-Cr^{k+\gamma-\lambda-1}\\&\qquad-\frac{2\lambda}{r^{n+a+2\lambda}}\int_{\partial B_r(x_0)} v\partial_{\nu}v\y\,d\HH^n-\frac{\lambda(n+a-1)}{r^{n+a+2\lambda+1}}H(r),
			\end{align*}
			\endgroup
			where in the last inequality we used \eqref{est}.
			
			By the definition of the homogeneous rescalings $v_r$ from \eqref{hom}, we have
			\begingroup
			\allowdisplaybreaks
			\begin{align*}
				\frac{d}{dr}&\W^{x_0}_\lambda(r,v)\ge-\frac{n+a+2\lambda-1}{r}\W^{x_0}_\lambda(r,v)+\frac{1}{r}\int_{\partial B_1}(\partial_{\nu}v_r-\lambda v_r)^2\,d\HH^n\\&\qquad+\frac{1}{r}\int_{\partial B_1}\lvert \nabla_\theta v_r\rvert^2\,d\HH^n-Cr^{k+\gamma-\lambda-1}\\&\qquad-\frac{\lambda(n+a+\lambda-1)}{r}\int_{\partial B_1} v_r^2\y\,d\HH^n\\&=-\frac{n+a+2\lambda-1}{r}\W^{x_0}_\lambda(r,v)+\frac{1}{r}\int_{\partial B_1}(\partial_{\nu}v_r-\lambda v_r)^2\,d\HH^n\\&\qquad+\frac{1}{r}\int_{\partial B_1}\lvert \nabla_\theta v_r\rvert^2\,d\HH^n-Cr^{k+\gamma-\lambda-1}\\&\qquad-\frac{\lambda(n+a+2\lambda-1)}{r}\int_{\partial B_1} v_r^2\y\,d\HH^n+\frac{\lambda^2}{r}\int_{\partial B_1} v_r^2\y\,d\HH^n\\&=-\frac{n+a+2\lambda-1}{r}\W^{x_0}_\lambda(r,v)+\frac{1}{r}\int_{\partial B_1}(\partial_{\nu}v_r-\lambda v_r)^2\,d\HH^n\\&\qquad-Cr^{k+\gamma-\lambda-1}-\frac{\lambda(n+a+2\lambda-1)}{r}\int_{\partial B_1} v_r^2\y\,d\HH^n\\&\qquad+\frac{n+a+2\lambda-1}{r}\int_{B_1}\lvert \nabla c_r\rvert^2\y\,dX,
			\end{align*}
			\endgroup
			where $c_r$ is the $\lambda-$homogeneous extension of $v_r|_{\partial B_1}$ and where in the last equality we used \eqref{w1}.
			
			Hence we have the following inequality
			\be\label{123}
			\begin{aligned} \frac{d}{dr}\W^{x_0}_\lambda(r,v)&\ge\frac{n+a+2\lambda-1}{r}\left(W_\lambda(1,c_r)-\W^{x_0}_\lambda(r,v)\right)\\&\qquad+\frac{1}{r}\int_{\partial B_1}(\partial_{\nu}v_r-\lambda v_r)^2\,d\HH^n-Cr^{k+\gamma-\lambda-1}\\&\ge\frac{n+a+2\lambda-1}{r}\left(W_\lambda(1,c_r)-\W^{x_0}_\lambda(r,v)\right)-Cr^{k+\gamma-\lambda-1}.
			\end{aligned}
			\ee
			Fixed $r>0$, if $\zeta_r$ is a minimizer of $\mathcal{E}(w)=\int_{B_1}\lvert \nabla w\rvert^2\y\,dX$ among the functions $\{w\in H^1(B_1,a): w\ge0\mbox{ on }B'_1, w=v_r=c_r\mbox{ in }\partial B_1\}$ and $$\zeta(x,y)=r^\lambda \zeta_r\left(\frac {x-x_0}{r},\frac yr\right)$$ is the function such that the homogeneous rescaling of $\zeta$ at $x_0$ is $\zeta_r$, then $v$ is the minimizer of $$\int_{B_r(x_0)}\lvert \nabla w\rvert^2\y\,dX+2\int_{B_r(x_0)}wh\,dX$$ among the functions $\{w\in H^1(B_r(x_0),a): w\ge0\mbox{ on }B'_r(x_0), w=\zeta\mbox{ in }\partial B_r(x_0)\}$.
			Thus \be\label{11} W_\lambda(1,\zeta_r)= W^{x_0}_\lambda(r,\zeta)\ge \W^{x_0}_\lambda(r,\zeta)-Cr^{k+\gamma-\lambda}\ge\W^{x_0}_\lambda(r,v)-Cr^{k+\gamma-\lambda}, \ee 
			since $H(r,\zeta)=H(r,v)$, then we can use \eqref{h1} to deduce the estimate \eqref{est} for $\int_{B_r}\zeta h\,dX$.
			
			Now we separate the case $\lambda=1+s$ and $\lambda=2m$.
			
			\textbf{Step 1.} In the case $\lambda=1+s$, we have that $$W_{1+s}(1,c_r)\ge \frac {1}{1-\kappa}W_{1+s}(1,\zeta_r)$$ by epiperimetric inequality for $\Ws$ (Theorem \ref{thm1}). Therefore $$\frac{d}{dr}\W^{x_0}_{1+s}(r,v)\ge\frac{c}{r}\W^{x_0}_{1+s}(r,v)-Cr^{k+\gamma-\lambda-1},$$ where we used \eqref{123} and \eqref{11}.
			
			Now for all $\alpha\in(0,1)$, we have \bea\frac{d}{dr}\left(\W^{x_0}_{1+s}(r,v)r^{-\alpha}\right)&=\frac{d}{dr}\W^{x_0}_{1+s}(r,v)r^{-\alpha}-\alpha\W^{x_0}_{1+s}(r,v)r^{-\alpha-1}\\&\ge (c-\alpha)\W^{x_0}_{1+s}(r,v)r^{-\alpha-1}-Cr^{k+\gamma-\lambda-1-\alpha}\\&\ge -C(c-\alpha)r^{k+\gamma-\lambda-1-\alpha}-Cr^{k+\gamma-\lambda-1-\alpha}\\&=-Cr^{k+\gamma-\lambda-1-\alpha}
			\eea where we have chosen $\alpha<c$ and we used \eqref{ric}.
			
			Integrating from $r$ to $r_0$, we obtain \bea\W^{x_0}_{1+s}(r_0,v)r_0^{-\alpha}-\W^{x_0}_{1+s}(r,v)r^{-\alpha}&\ge -Cr_0^{k+\gamma-\lambda-\alpha}+Cr^{k+\gamma-\lambda-\alpha}\\&\ge-Cr_0^{k+\gamma-\lambda-\alpha},\eea where we used that $\alpha<k+\gamma-\lambda$. Thus, \eqref{wsdecay} follows.
			
			\textbf{Step 2.} In the case $\lambda=2m$, since $W_{2m}(1,c_r)\ge W_{2m}(1,\zeta_r)\ge0$, we have that \bea W_{2m}(1,c_r)&\ge  W_{2m}(1,\zeta_r)+\eps\Theta^{-\beta} W_{2m}(1,c_r)^{1+\beta}\\&\ge W_{2m}(1,\zeta_r)+\eps\Theta^{-\beta} W_{2m}(1,\zeta_r)^{1+\beta}\eea by epiperimetric inequality (Theorem \ref{thm2}) for $\Wm$ (see Remark \ref{blo} below). Then \be\label{t}\frac{d}{dr}\W^{x_0}_{2m}(r,v)\ge\frac{c}{r} \W^{x_0}_{2m}(r,v)^{1+\beta}-Cr^{k+\gamma-\lambda-1},\ee where we used \eqref{123}, \eqref{11} and where we suppose $\W^{x_0}_{2m}(r,v)\ge0$, since otherwise we were done.

			
			Let $F(r)=\max\{\W^{x_0}_{2m}(r,v),\left(\frac2c\right)^{\frac1{1+\beta}}Cr^{\frac{k+\gamma-\lambda}{1+\beta}}\}$, then we claim that $$F'(r)\ge\frac c{2r}F(r)^{1+\beta}, $$ for all $r\in(0,r_0)$, up to decreasing $r_0$. In fact, if $F'(r)=\frac{d}{dr}\W^{x_0}_{2m}(r,v)$, then $$F'(r)\ge\frac c{2r}\W^{x_0}_{2m}(r,v)^{1+\beta}=\frac c{2r}F(r)^{1+\beta},$$by \eqref{t} and 
			since $\W^{x_0}_{2m}(r,v)\ge\left(\frac2c\right)^{\frac1{1+\beta}}Cr^{\frac{k+\gamma-\lambda}{1+\beta}}.$ While, if $$F'(r)= \left(\frac2c\right)^{\frac1{1+\beta}}C\left(\frac{k+\gamma-\lambda}{1+\beta}\right)r^{\frac{k+\gamma-\lambda}{1+\beta}-1},$$ then \bea F'(r)\ge \frac c2\left(\frac2c\right)^{\frac1{1+\beta}}Cr^{k+\gamma-\lambda-1}=\frac c{2r}F(r)^{1+\beta},\eea
			since $\W^{x_0}_{2m}(r,v)\le\left(\frac2c\right)^{\frac1{1+\beta}}Cr^{\frac{k+\gamma-\lambda}{1+\beta}},$ that is the claim

			Now notice that the function $r\mapsto-F(r)^{-\beta}-\frac c2\beta\log(r)$ is increasing for $r\in(0,r_0)$. In fact, by the previous inequality, we have\bea \frac{d}{dr}\left(-\frac1\beta F(r)^{-\beta}-\frac c2\log(r)\right)&=F(r)^{-1-\beta}F'(r)- \frac c{2r}\ge0.
			\eea
			
			Therefore, if we choose $r_0>0$ such that $\W_{2m}^{x_0}(r_0,v)\ge0$, we have \bea F(r)^{-\beta}&\ge -\frac c2\beta\log(r)+F(r_0)^{-\beta}+\frac c2\beta\log(r_0)&\ge-\frac c2\beta\log\left(\frac{r}{r_0}\right),
			\eea that is \bea \W_{2m}^{x_0}(r,v)\le F(r)\le C\left(- \log\left(\frac r{r_0}\right)^{-\frac1\beta}\right)
			\eea
			Finally, if $r\le r_0^2$, then $\log\left(\frac r{r_0}\right)\le\frac{1}{2}\log(r)$ and \eqref{wmdecay} follows.

		\end{proof} 
	\end{proposition}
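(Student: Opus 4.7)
The overall plan is to convert the epiperimetric inequalities of Theorem \ref{thm1} and Theorem \ref{thm2} into a differential inequality for $r\mapsto\W^{x_0}_\lambda(r,v)$ and then integrate. First, I would differentiate $\W^{x_0}_\lambda$ using the expressions for $H'(r)$ and $\mathcal{I}'(r)$ from Lemma \ref{h'd'}, and rewrite the boundary integrals in terms of the homogeneous rescalings $v_r$ from \eqref{hom}. After absorbing the inhomogeneity terms coming from $h$ via \eqref{est}, the master inequality should take the form
\[
\frac{d}{dr}\W^{x_0}_\lambda(r,v)\ \ge\ \frac{n+a+2\lambda-1}{r}\bigl(W_\lambda(1,c_r)-\W^{x_0}_\lambda(r,v)\bigr)-Cr^{k+\gamma-\lambda-1},
\]
where $c_r$ denotes the $\lambda$-homogeneous extension to $B_1$ of the trace $v_r|_{\partial B_1}$.

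To close the inequality I would fix $r$, let $\zeta_r\in\KK_{c_r}$ be the minimizer of $\EE$ with trace $c_r$, and compare via the epiperimetric inequalities. The rescaling back to $B_r(x_0)$ of $\zeta_r$ is a competitor for $v$ in the perturbed minimization problem, so $W_\lambda(1,\zeta_r)\ge\W^{x_0}_\lambda(r,v)-Cr^{k+\gamma-\lambda}$. Combined with Theorem \ref{thm1} (case $\lambda=1+s$) this yields $W_{1+s}(1,c_r)\ge(1-\kappa)^{-1}(\W^{x_0}_{1+s}(r,v)-Cr^{k+\gamma-(1+s)})$, and combined with Theorem \ref{thm2} (case $\lambda=2m$, where the energy may be assumed nonnegative as otherwise the result is automatic) this yields $W_{2m}(1,c_r)\ge\W^{x_0}_{2m}(r,v)+c\bigl(\W^{x_0}_{2m}(r,v)\bigr)^{1+\beta}-Cr^{k+\gamma-2m}$.

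Substituting back, case (1) gives the linear differential inequality $\frac{d}{dr}\W^{x_0}_{1+s}(r,v)\ge\frac{c}{r}\W^{x_0}_{1+s}(r,v)-Cr^{k+\gamma-(1+s)-1}$. I would multiply by $r^{-\alpha}$ for $\alpha\in(0,c)$ sufficiently small, use the lower bound \eqref{ric} to control the sign of $\W^{x_0}_{1+s}$, and integrate from $r$ to $r_0$; this produces the polynomial decay \eqref{wsdecay}. Case (2) gives the nonlinear inequality $\frac{d}{dr}\W^{x_0}_{2m}(r,v)\ge\frac{c}{r}\bigl(\W^{x_0}_{2m}(r,v)\bigr)^{1+\beta}-Cr^{k+\gamma-2m-1}$, and the key ODE observation is that, when the nonlinear driving term dominates the error, the function $-F^{-\beta}-\tfrac{c\beta}{2}\log r$ is monotone increasing in $r$, so upon integration $F(r)^{-\beta}\gtrsim -\log r$ and hence \eqref{wmdecay}.

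The hard part is the degeneracy of $F^{1+\beta}$ as $F\to 0$ in case (2): the additive error $Cr^{k+\gamma-2m-1}$ could in principle overwhelm the driving term when $\W^{x_0}_{2m}(r,v)$ is small. I would remedy this by truncating, setting $F(r):=\max\{\W^{x_0}_{2m}(r,v),\,A r^{(k+\gamma-2m)/(1+\beta)}\}$ with $A$ chosen so that on the error-dominated branch the exponent match ensures $F'\ge\tfrac{c}{2r}F^{1+\beta}$, while on the other branch the nonlinear ODE (with half the constant absorbed by the error) is preserved. The monotonicity of $-F^{-\beta}-\tfrac{c\beta}{2}\log r$ then holds on the full interval $(0,r_0)$, and since $\W^{x_0}_{2m}(r,v)\le F(r)$ the logarithmic decay transfers back to $\W^{x_0}_{2m}$.
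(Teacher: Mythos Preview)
Your proposal is correct and follows essentially the same approach as the paper: the master differential inequality \eqref{123}, the comparison with the minimizer $\zeta_r$ rescaled back to $B_r(x_0)$, the linear ODE integration for $\lambda=1+s$, and the truncation $F(r)=\max\{\W^{x_0}_{2m}(r,v),\,A r^{(k+\gamma-2m)/(1+\beta)}\}$ together with the monotonicity of $-F^{-\beta}-\tfrac{c\beta}{2}\log r$ for $\lambda=2m$ are exactly the steps the paper carries out. The only point you did not mention explicitly is the verification of the hypotheses \eqref{cond} of Theorem \ref{thm2} for $c_r$ uniformly in $r$ (handled in Remark \ref{blo}), but this is a routine boundedness check.
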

	\begin{remark}\label{blo}
		In the Step 2 of the proof of Proposition \ref{dec}, we used the logarithmic epiperimetric inequality for the rescaled $c_r$, but to use Theorem \ref{thm2}, we have to check that the conditions \eqref{cond} hold with $\Theta>0$ that does not depend on $r$.
		
		In fact $\lVert c_r\rVert^2_{L^2(\partial B_1,a)}=\lVert v_r\rVert^2_{L^2(\partial B_1,a)}\le \Theta$ as in the proof of Proposition \ref{prop2}.
		
		Moreover, by \eqref{w2} \bea W_{2m}(c_r)&\le C\lVert \nabla_\theta c_r\rVert^2 \ab\le C\lVert \nabla c_r\rVert^2 \aB\\&=C(\lVert \nabla_\theta v_r\rVert^2 \ab+\lambda^2\lVert  v_r\rVert^2 \ab)\\&\le C(\lVert \nabla v_r\rVert^2 \aB+\lambda^2\lVert  v_r\rVert^2 \ab)\le \Theta, \eea where in the equality we used \eqref{w1} and in the last inequality we used the boundness of $v_r$ in $H^1(B_1,a)$, as in the proof of Proposition \eqref{prop2}.
	\end{remark}
	
	\subsection{Decay of homogeneous rescalings}
	The decay of the Weiss' energy allows us to prove a decay of the norm in $L^1(\partial B_1,a)$ of the homogeneous rescalings. As a consequence, we get the uniqueness of the homogeneous blow-up.
	\begin{proposition}[Decay of homogeneous rescalings] \label{homdec}Let $u$ be a solution of \eqref{fract2} and $v=u^{x_0}$ be the solution of \eqref{fract5} with $x_0\in\Gamma(u)$, $\vf\in C^{k,\gamma}(\R^n)$, $k\ge2$ and $\gamma\in(0,1)$. Let $v^{(\lambda)}_{r,x_0} $ be the homogeneous rescalings from \eqref{hom}, and $K\subset\Gamma_\lambda(u)\cap \R^{n+1}$ be a compact set.
		\begin{enumerate}
			\item If $\lambda=1+s$, then, up to decreasing $\alpha$ in Proposition \ref{dec}, we have\be\label{homdecays} \int_{\partial B_1} \lvert v^{(1+s)}_{r,x_0}-v^{(1+s)}_{r',x_0}\rvert\y \,d\HH^n\le Cr^\alpha\ee for all $x_0\in\Gamma_{1+s}(u)\cap K$ and for all $0<r'<r<r_0$, for some $r_0>0$.
			\item If $\lambda=2m\le k$, and $\beta\in(0,1)$ is the constant in Theorem \ref{thm2}, then \be\label{homdecaym} \int_{\partial B_1} \lvert v^{(2m)}_{r,x_0}-v^{(2m)}_{r',x_0}\rvert\y \,d\HH^n\le C(- \log(r)^{-\frac{1}{2\beta}} )\ee for all $x_0\in\Gamma_{2m}(u)\cap K$ and for all $0<r'<r<r_0$, for some $r_0>0$.
		\end{enumerate}
		
		In particular, the homogeneous blow-up $v^{(\lambda)}_{0,x_0} $ is unique and the whole sequence $v^{(\lambda)}_{r,x_0} $ converges to $v^{(\lambda)}_{0,x_0} $ as $r\to0^+$.
		\begin{proof} Dropping the dependence on $x$ and $\lambda$, for $0<r'<r<r_0$, where $r_0$ is as in the previous Proposition, using \eqref{monoW}, we get 
			\begingroup
			\allowdisplaybreaks
			\begin{align*}
				\int_{\partial B_1} \lvert &v_{r}-v_{r'}\rvert\y \,d\mathcal{H}^n=\int_{\partial B_1}\int_{r'}^r\left\lvert \frac{d}{dt}v_{t} \right\rvert \y\,dt\,d\mathcal{H}^n \\&=\int_{\partial B_1}\int_{r'}^r\frac1t\left\lvert \nabla v_t\cdot \nu-\lambda v_t\right\rvert\y\,dt\,d\mathcal{H}^n\\&\le C\int_{r'}^r\frac{1}{t^\frac12} \left(\frac{1}{t}\int_{\partial B_1}\left\lvert\nabla v_t\cdot \nu-\lambda v_t\right\rvert^2 \y\,d\mathcal{H}^n\right)^\frac12\,dt\\&\le C\int_{r'}^r \frac{1}{t^\frac12}\left(\frac{d}{dt}\left(\W^{x_0}_\lambda(t,v)+Cr^{k+\gamma-\lambda}\right)\right)^\frac12\,dt\\&\le C\left(\int_{r'}^r \frac{1}{t}\,dt\right)^\frac12\left(\int_{r'}^r \frac{d}{dt}\left(\W^{x_0}_\lambda(t,v)+Cr^{k+\gamma-\lambda}\right)\,dt\right)^\frac12\\&=C\log\left(\frac r{r'}\right)^\frac12(\W^{x_0}_\lambda(r,v)+Cr^{k+\gamma-\lambda}-\W^{x_0}_\lambda(r',v)-C{(r')}^{k+\gamma-\lambda})^\frac12\\&\le C\log\left(\frac r{r'}\right)^\frac12\left(\W^{x_0}_\lambda(r,v)+r^{k+\gamma-\lambda}\right)^\frac12,
			\end{align*}
			\endgroup	
			where we used that $\W^{x_0}_\lambda(r',v) +C(r')^{k+\gamma-\lambda}\ge0$, by \eqref{monoW} and \eqref{ric}.
			
			The conclusion follows by a dyadic decomposition as in \cite{fs16}, \cite{gps16} or \cite{csv17}, and by using \eqref{wsdecay} for $\lambda=1+s$, and \eqref{wmdecay} for $\lambda=2m$.
		\end{proof}
	\end{proposition}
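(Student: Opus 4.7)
The plan is to bound $\int_{\partial B_1}|v_r^{(\lambda)}-v_{r'}^{(\lambda)}|\y\,d\HH^n$ by integrating the radial derivative of $t\mapsto v_t^{(\lambda)}$, applying Cauchy-Schwarz, and then invoking the monotonicity formula \eqref{monoW} together with the decay of $\W^{x_0}_\lambda$ proved in Proposition \ref{dec}. From $v_t^{(\lambda)}(x,y)=t^{-\lambda}v(x_0+tx,ty)$ one computes
\[
\frac{d}{dt}v_t^{(\lambda)}(x,y)=\frac{1}{t}\bigl(\nabla v_t^{(\lambda)}\cdot x-\lambda v_t^{(\lambda)}\bigr),
\]
so that on $\partial B_1$ the radial derivative is exactly $t^{-1}(\nabla v_t^{(\lambda)}\cdot\nu-\lambda v_t^{(\lambda)})$, which is precisely the quantity appearing in \eqref{monoW} after rescaling.

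Next I would write, for $0<r'<r<r_0$,
\[
\int_{\partial B_1}\lvert v_r^{(\lambda)}-v_{r'}^{(\lambda)}\rvert\y\,d\HH^n
\le \int_{r'}^{r}\int_{\partial B_1}\frac{1}{t}\lvert\nabla v_t^{(\lambda)}\cdot\nu-\lambda v_t^{(\lambda)}\rvert\y\,d\HH^n\,dt,
\]
and apply Cauchy-Schwarz twice (once on $\partial B_1$, once in $t$ with weight $t^{-1/2}$) to get a product of $\log(r/r')^{1/2}$ and the integral in $t$ of $\tfrac{d}{dt}(\W_\lambda^{x_0}(t,v)+Ct^{k+\gamma-\lambda})$ after the change of variables in the $\y\,d\HH^n$ measure on $\partial B_t(x_0)$. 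This last integral telescopes, and using that $\W^{x_0}_\lambda(r',v)+C(r')^{k+\gamma-\lambda}\ge 0$ by \eqref{monoW}-\eqref{ric}, one obtains the one-step estimate
\[
\int_{\partial B_1}\lvert v_r^{(\lambda)}-v_{r'}^{(\lambda)}\rvert\y\,d\HH^n
\le C\log\!\left(\tfrac{r}{r'}\right)^{1/2}\bigl(\W^{x_0}_\lambda(r,v)+Cr^{k+\gamma-\lambda}\bigr)^{1/2}.
\]

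The final step is a standard dyadic decomposition: apply the one-step estimate with $r'=r/2$ and sum the resulting geometric series. For $\lambda=1+s$, \eqref{wsdecay} gives a bound of order $r^{\alpha/2}$ at each scale, and summing preserves a polynomial rate (after slightly decreasing $\alpha$), yielding \eqref{homdecays}. For $\lambda=2m$, the bound \eqref{wmdecay} gives the slower rate $C|\log r|^{-1/(2\beta)}$ at each dyadic scale, and the same telescoping/comparison with the integral of $|\log t|^{-1/(2\beta)}\,dt/t$ yields \eqref{homdecaym}. Uniqueness of the blow-up is then an immediate Cauchy-sequence consequence of summability, together with the $C^{1,\alpha}_a$-compactness from Proposition \ref{prop2}.

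The main obstacle is purely bookkeeping: one must be careful that the error term $Cr^{k+\gamma-\lambda}$ coming from the non-zero right-hand side $h$ is dominated by (or absorbed into) the leading decay rates of the Weiss energy both in the $L^2(\partial B_1,a)$-Cauchy estimate and in the dyadic sum. This forces, in the case $\lambda=1+s$, the choice $\alpha<\min\{\alpha_{\eqref{wsdecay}},k+\gamma-(1+s)\}$, while in the case $\lambda=2m$ one uses $2m\le k$ so that $k+\gamma-2m>0$ and $r^{k+\gamma-2m}$ is eventually much smaller than any negative power of $|\log r|$.
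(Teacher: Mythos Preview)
Your proposal is correct and follows essentially the same route as the paper: compute the radial derivative of $v_t^{(\lambda)}$, apply Cauchy--Schwarz first on $\partial B_1$ and then in $t$, invoke the monotonicity formula \eqref{monoW} to recognize the derivative of $\W^{x_0}_\lambda(t,v)+Ct^{k+\gamma-\lambda}$, telescope, and finish with a dyadic decomposition using the decay rates \eqref{wsdecay} and \eqref{wmdecay}. Your additional remarks on absorbing the error term $Cr^{k+\gamma-\lambda}$ make explicit a point the paper leaves implicit.
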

	\subsection{Non-degeneracy of homogeneous blow-up}
	Another consequence of the decay of the Weiss' energy is the non-degeneracy of the homogeneous blow-ups, i.e. the homogeneous blow-ups cannot vanish identically.
	\begin{proposition}[Non-degeneracy of homogeneous blow-up] \label{nondeg} Let $u$ be a solution of \eqref{fract2} and $v=u^{x_0}$ be the solution of \eqref{fract5} with $x_0\in\Gamma(u)$, $\vf\in C^{k,\gamma}(\R^n)$, $k\ge2$ and $\gamma\in(0,1)$. If $\lambda=1+s$ or $\lambda=2m\le k$, then \bea H^{x_0}(r)\ge H_0^{x_0}r^{n+a+2\lambda} \eea for all $x_0\in\Gamma_\lambda(u)$ and for all $r\in(0,r_0)$, for some $r_0>0$, where $$H_0^{x_0}:=\lim_{r\to0^+}\frac{H^{x_0}(r)}{r^{n+a+2\lambda}}>0.$$ 
		In particular the homogeneous blow-up $v^{(\lambda)}_{0,x} $ is non-trivial, since $$\lVert v^{(\lambda)}_{0,x_0} \rVert_{L^2(\partial B_1,a)}=H_0^{x_0}>0.$$
		\begin{proof}
			Note that the inequality follows by the proof of \eqref{h1}, in fact we can obtain that the function $r\mapsto \frac{H^{x_0}(r)}{r^{n+a+2\lambda}}$ is increasing for $r$ small enough.
			Hence it is sufficient to prove that $H_0^{x_0}>0$.

			Let $v_r$ be the homogeneous rescalings as in \eqref{hom} of $v$ at $x_0$, then $v_r$ converge in $C^{1,\alpha}_a$ to some $v_0$, $\lambda-$homogeneous solution, as $r\to0^+$, up to subsequences.
			
			Let $(v_r)_\rho$ be the rescalings as in \eqref{rescaling} of $v_r$ at 0, then $(v_r)_\rho$ converge in $C^{1,\alpha}_a$ to some $\widetilde v_r$, $\lambda-$homogeneous solution, as $\rho\to0^+$, up to subsequences. 
			
			Arguing by contradiction, we suppose that $H^{x_0}_0=0$. Then \bea\lVert \widetilde v_r\rVert^2\ab&=\lim_{\rho\to0^+}H(1,(v_r)_\rho)=\lim_{\rho\to0^+}\frac{H(\rho,v_r)}{\rho^{n+a+2\lambda}}\\&=\frac{r^{n+a+2\lambda}}{H^{x_0}(r,v)}\lim_{\rho\to0^+}\frac{H^{x_0}(\rho r,v)}{(\rho r)^{n+a+2\lambda}}=0.\eea
			Therefore, using \eqref{homdecays} and \eqref{homdecaym}, together with the regularity of the solution, we obtain $$\lVert (v_r)_\rho \rVert^2\ab\le C\omega(\rho),$$ with $\omega(\rho)\to0$ as $\rho\to0^+.$
			
			Since $v_0$ is $\lambda-$homogeneous, we have \bea1&=H(1,v_0)=\frac{1}{\rho^{n+a+2\lambda}}H(\rho,v_0)\\&\le\frac{1}{\rho^{n+a+2\lambda}}\lVert v_0-v_r \rVert^2_{L^2(\partial B_\rho,a)}+\frac{1}{\rho^{n+a+2\lambda}}\lVert v_r \rVert^2_{L^2(\partial B_\rho,a)}\\&=\frac{1}{\rho^{n+a+2\lambda}}\lVert v_0-v_r \rVert^2_{L^2(\partial B_\rho,a)}+\lVert (v_r)_\rho \rVert^2_{L^2(\partial B_1,a)}\\&\le\frac{1}{\rho^{n+a+2\lambda}}\lVert v_0-v_r \rVert^2_{L^2(\partial B_\rho,a)}+C\omega(\rho).
			\eea 
			Finally, it is sufficient to choose $\rho>0$ small enough and choose a corresponding $r=r(\rho)>0$ small enough to obtain a contradiction.
		\end{proof}
	\end{proposition}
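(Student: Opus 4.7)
The proposition has two parts which I would address in order: the existence of the limit $H_0^{x_0}$ together with the lower bound $H^{x_0}(r) \ge H_0^{x_0} r^{n+a+2\lambda}$, and the strict positivity $H_0^{x_0} > 0$.

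For the first part, the plan is to show that $r \mapsto H^{x_0}(r)/r^{n+a+2\lambda}$ is monotone increasing on some interval $(0, r_0)$, which simultaneously yields the existence of $H_0^{x_0}$ as a limit and the desired inequality. Using \eqref{h'} one rewrites
\begin{equation*}
\frac{d}{dr}\log\frac{H^{x_0}(r)}{r^{n+a+2\lambda}} = \frac{2}{r}\bigl(N^{x_0}(r,v) - \lambda\bigr) - \frac{2}{H^{x_0}(r)}\int_{B_r(x_0)} v h |y|^a \, dX,
\end{equation*}
where the first term is non-negative for $r$ small since $N^{x_0}(r,v) \to \lambda$ monotonically from above (Proposition \ref{mono} combined with Remark \ref{for}), while the second term is $O(r^{k+\gamma-\lambda-1})$ by \eqref{est} and \eqref{h2} and is therefore negligible. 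This is essentially the same computation as in the proof of \eqref{h1} in Lemma \ref{h1h2}, and it gives the first claim.

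For the strict positivity, I would argue by contradiction, assuming $H_0^{x_0} = 0$. By Proposition \ref{prop2} the homogeneous rescalings $v_r := v^{(\lambda)}_{r,x_0}$ converge, along a subsequence, in $C^{1,\alpha}_a$ to some $\lambda$-homogeneous blow-up $v_0$. Applying Proposition \ref{prop2} a second time to $v_r$ at the origin, the rescalings $(v_r)_\rho$ converge as $\rho \to 0^+$ to a $\lambda$-homogeneous solution $\tilde v_r$, and the scaling identity $H(\rho, v_r) = H^{x_0}(r\rho, v)/r^{n+a+2\lambda}$ combined with the contradiction hypothesis yields $\|\tilde v_r\|^2_{L^2(\partial B_1, a)} = 0$, i.e.\ $\tilde v_r \equiv 0$. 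Combining this vanishing with the uniform rate of convergence of homogeneous rescalings to the unique blow-up (Proposition \ref{homdec}) and the interior $C^{1,\alpha}_a$ regularity, I would upgrade the statement to a uniform $L^2$ bound $\|(v_r)_\rho\|^2_{L^2(\partial B_1, a)} \le C \omega(\rho)$, with $\omega(\rho) \to 0$ as $\rho \to 0^+$, independent of small $r$.

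The final step, which is the main obstacle, is to extract a contradiction from the $\lambda$-homogeneity of a \emph{non-trivial} blow-up $v_0$ (normalized so that $H(1, v_0) = 1$, as supplied by the classical analysis of normalized rescalings in \cite{css08}). The triangle inequality gives
\begin{equation*}
1 = H(1, v_0) = \frac{H(\rho, v_0)}{\rho^{n+a+2\lambda}} \le \frac{\|v_0 - v_r\|^2_{L^2(\partial B_\rho, a)}}{\rho^{n+a+2\lambda}} + \|(v_r)_\rho\|^2_{L^2(\partial B_1, a)}.
\end{equation*}
Choosing first $\rho$ small enough so that $C\omega(\rho) < \tfrac{1}{2}$ uniformly in small $r$, and then picking $r = r(\rho)$ along the convergent subsequence so that $\|v_r - v_0\|^2_{L^2(\partial B_\rho, a)} < \tfrac{1}{2}\rho^{n+a+2\lambda}$, one reaches the contradiction $1 < 1$. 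The delicate point is the uniformity in $r$ of the bound $\|(v_r)_\rho\|^2 \le C\omega(\rho)$, which is precisely what the quantitative decay estimate \eqref{homdecays}--\eqref{homdecaym} of Proposition \ref{homdec} provides.
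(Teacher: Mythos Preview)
Your proposal is correct and follows essentially the same approach as the paper: monotonicity of $r\mapsto H^{x_0}(r)/r^{n+a+2\lambda}$ for the first part, then a contradiction argument via two levels of rescalings combined with the decay estimates of Proposition~\ref{homdec} for the positivity of $H_0^{x_0}$. You are in fact slightly more explicit than the paper in flagging that the $v_0$ in the final triangle-inequality step must be the \emph{normalized} blow-up with $H(1,v_0)=1$, a point the paper leaves implicit.
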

	\subsection{Regularity of blow-ups}
	Roughly speaking, we prove the regularity of the map that to any point $x\in \Gamma_\lambda(u)\cap K$ associates the homogeneous blow-up of $v=u^{x}$ at $x$, where $\lambda=1+s$ or $\lambda=2m$. We are able to prove the regularity with an explicit modulus of continuity that depend on the right-hand side of \eqref{homdecays} and \eqref{homdecaym}.
	\begin{proposition}\label{prop3} Let $u$ be a solution of \eqref{fract2} and $v=u^{x}$ be the solution of \eqref{fract5} with $x\in\Gamma_\lambda(u)$, $\vf\in C^{k,\gamma}(\R^n)$, $k\ge2$ and $\gamma\in(0,1)$. Let $v_{0,x}^{(\lambda)}$ be the $\lambda-$homogeneous blow-up of $v=u^{x}$ at $x$ and $K\subset \Gamma_\lambda(u)\cap \R^{n+1}$ be a compact set. \begin{enumerate} 
			\item If $\lambda=1+s$ and $$v_{0,x}^{(1+s)}=\lambda_x h_{e(x)}^s,$$ with $\lambda_x>0$, $e(x)\in\partial B'_1$ and $h_e^s$ as in \eqref{he}, then up to decreasing $\alpha$ in Proposition \ref{homdec}, it holds \be\label{reg2}\lvert \lambda_{x_1} -\lambda_{x_2}\rvert\le C\lvert x_1-x_2\rvert^\alpha\ee and \be\label{reg}\lvert e(x_1) -e(x_2)\rvert\le C\lvert x_1-x_2\rvert^\alpha\ee for all $x_1,x_2\in\Gamma_{1+s}(u)\cap K$.
			
			\item If $\lambda=2m\le k$ and $$v_{0,x}^{(2m)}=\lambda_xp_x $$ with $\lambda_x>0$ and $p_x$ a $2m-$homogeneous polynomial such that $\lVert p_x\rVert\ab=1$, then \be\label{reg3}\lvert \lambda_{x_1} -\lambda_{x_2}\rvert\le C(-\log(\lvert x_1-x_2\rvert)^{-\frac{1-\beta}{2\beta}})\ee and \be\label{reg1}\lVert p_{x_1} -p_{x_2}\rVert_{L^\infty( B_1)}\le C(-\log(\lvert x_1-x_2\rvert)^{-\frac{1-\beta}{2\beta}})\ee for all $x_1,x_2\in\Gamma_{2m}(u)\cap K$.
		\end{enumerate}
		\begin{proof}
			First note that the characterization of blow-up and $\lambda_x>0$ follows by Proposition \ref{cha} and Proposition \ref{nondeg}.
			
			Secondly, we can use \eqref{h'} to get \bea\frac{d}{dr}\left(\frac{H(r)}{r^{n+a+2\lambda}}\right)&=\frac{H'(r)}{r^{n+a+2\lambda}}-\frac{n+a+2\lambda}{r^{n+a+2\lambda+1}}H(r)\\&=\frac{(n+a)H(r)}{r^{n+a+2\lambda+1}}+
			2\frac{\mathcal{I}(r)}{r^{n+a+2\lambda}}-\frac{n+a+2\lambda}{r^{n+a+2\lambda+1}}H(r)\\&=2\frac{\mathcal{I}(r)}{r^{n+a+2\lambda}}-2\lambda\frac{ H(r)}{r^{n+a+2\lambda+1}}\\&=2\frac{\W^{x_0}_\lambda(r,v)}{r}.
			\eea Then, for $r$ small enough, integrating the last equality, we obtain
			$$\frac{H^{x_i}(r)}{r^{n+a+2\lambda}}-H^{x_i}_0\le C\omega_\alpha(r),$$ where we denote by $$\omega_\alpha(r)=\begin{cases} r^\alpha &\mbox{ if } \lambda=1+s\\
				(-\log(r)^{-\frac{1-\beta}{2\beta}}) &\mbox{ if } \lambda=2m.
			\end{cases}$$ the modulus of continuity. We indicate explicitly the dependence on $\alpha$ since it must be reduced to obtain the final claim. 
			
			Thus, since $H^{x_i}_0=\frac{\lambda_{x_i}^2}{c_0},$ for some dimensional constant $c_0>0$, we obtain \be\label{perdim0}c_0\frac{H^{x_i}(r)}{r^{n+a+2\lambda}}-\lambda_{x_i}^2\le C\omega_\alpha(r).\ee
			Let now $x_1,x_2\in\Gamma_\lambda(u)\cap K$ and $r=\lvert x_1-x_2\rvert^\sigma$ with $\sigma\in(0,1)$ to choose. Let $w_r$ be the homogeneous rescalings in \eqref{hom} of $w=u-\vf$. Using the regularity of the solution and $\nabla w(x_2,0)=0$, we deduce that
			\be\begin{aligned}\label{perdim-1}
				&\int_{\partial B_1} \lvert w_{r,x_1}-w_{r,x_2}\rvert\y \,d\mathcal{H}^n\\&\le\frac{1}{r^\lambda}\int_{\partial B_1} \int_0^1\lvert\nabla w((t(x_1+rx)+(1-t)(x_2+rx),ry) \rvert \lvert x_1-x_2\rvert\,dt\y\,d\mathcal{H}^n\\&\le C\frac{(\lvert x_1- x_2\lvert^{s}+r^s)\lvert x_1-x_2\rvert}{r^\lambda}\le C\frac{\lvert x_1-x_2\rvert}{r^{\lambda-s}}=C\lvert x_1-x_2\rvert^{1-\sigma(\lambda-s)}\\&=\begin{cases}
					\lvert x_1-x_2 \rvert^{1-\sigma} &\mbox{ if } \lambda=1+s\\
					\lvert x_1-x_2 \rvert^{1-2m\sigma+\sigma s} &\mbox{ if } \lambda=2m,
				\end{cases}\\&= C\omega_{1-\sigma}(\lvert x_1-x_2\rvert),
			\end{aligned}
			\ee
			where we can choose, for example, $\sigma=\frac{1}{2m}$ for $\lambda=2m$.
			
			Now, recalling $Q ^{x_i}$ as in Lemma \eqref{stima}, we have \bea\int_{\partial B_1} \lvert  Q^{x_1}_{r,x_1}&-  Q^{x_2}_{r,x_2}\rvert\y \,d\mathcal{H}^n\\&\le\int_{\partial B_1} \lvert  Q^{x_1}_{r,x_1}-  Q^{x_2}_{r,x_1}\rvert\y \,d\mathcal{H}^n+\int_{\partial B_1} \lvert   Q^{x_2}_{r,x_1}-  Q^{x_2}_{r,x_2}\rvert\y \,d\mathcal{H}^n\\&\le C\omega_{k+\gamma-\lambda}(r)+ C\omega_{1-\sigma}(\lvert x_1-x_2\rvert),
			\eea where in the last inequality we have used Lemma \ref{stima} for the first term, and the same computation in \eqref{perdim-1} for the second term.
			Therefore, recalling the definition of $u^{x_i}$ that is a solution of \eqref{fract4}, 
			we deduce that
			\be\label{perdim1}\int_{\partial B_1} \lvert u^{x_1}_{r,x_1}-u^{x_2}_{r,x_2}\rvert\y \,d\mathcal{H}^n\le C\omega_{1-\sigma}(\lvert x_1-x_2\rvert),
			\ee if $\sigma\in(0,1)$ such that $(k+\gamma-\lambda)\sigma=1-\sigma$ in the case $\lambda=1+s$.
			
			By the estimates \eqref{perdim0} and \eqref{perdim1} (with the regularity of the solution), we obtain 
			\begingroup\allowdisplaybreaks
			\begin{align*}
				\lvert\lambda_{x_1}^2&-\lambda_{x_2}^2\rvert\\&\le\left\lvert\lambda_{x_1}^2-c_0\frac{H^{x_1}(r)}{r^{n+a+2\lambda}}\right\rvert+c_0\left\lvert\frac{H^{x_1}(r)}{r^{n+a+2\lambda}}-\frac{H^{x_2}(r)}{r^{n+a+2\lambda}}\right\rvert+\left\lvert c_0\frac{H^{x_2}(r)}{r^{n+a+2\lambda}}-\lambda_{x_2}^2\right\rvert\\&\le C\omega_{\alpha\sigma}(r)+c_0\left\lvert H(1,u^{x_1}_{r,x_1})-H(1,u^{x_2}_{r,x_2})\right\rvert\\&\le C\omega_\alpha(r)+C\omega_{1-\sigma}(\lvert x_1-x_2\rvert)\\&=C\omega_{\delta}(\lvert x_1-x_2\rvert),
			\end{align*}
			\endgroup
			 if we choose $\delta=\alpha\sigma\wedge(1-\sigma)$.
			
			Since the function $x\mapsto\lambda^2_x$ is $\omega-$continuous and $K$ is a compact set, we get that \eqref{reg2} and \eqref{reg3} hold.
			
			Furthermore, we denote by $u_{0,x_i}$ the homogeneous blow-up of the rescalings in \eqref{hom} of the function $u^{x_i}$. By \eqref{homdecays}, \eqref{homdecaym} and \eqref{perdim1}, we get
			\begin{equation} \label{perdim3}
				\begin{aligned}
					\int_{\partial B_1} \lvert u_{0,x_1}-u_{0,x_2}\rvert& \y\,d\mathcal{H}^n\\&\le\int_{\partial B_1} \lvert u_{0,x_1}-u^{x_1}_{r,x_1}\rvert\y \,d\mathcal{H}^n+\int_{\partial B_1} \lvert u^{x_1}_{r,x_1}-u^{x_2}_{r,x_2}\rvert \y\,d\mathcal{H}^n\\&\qquad+\int_{\partial B_1} \lvert u^{x_2}_{r,x_2}-u_{0,x_2}\rvert \y\,d\mathcal{H}^n\\&\le \omega_\alpha(r)+\omega_{1-\sigma}(\lvert x_1-x_2\rvert)=\omega_{\delta}(\lvert x_1-x_2\rvert),
				\end{aligned} 
			\end{equation}since $\delta=\alpha\sigma\wedge(1-\sigma)$.
			
			Notice that the blow-ups $u_{0,x_1}$ and $u_{0,x_2}$ are solutions of \eqref{fract4} (with 0 obstacle). Therefore $$\int_{B_1} L_a (u_{0,x_i})u_{0,x_i}\y\,dX=0$$ and $$\int_{B_1} L_a (u_{0,x_i})u_{0,x_j}\y\,dX=0$$ for all $i,j\in\{1,2\},$ and so $$
			\int_{B_1} L_a (u_{0,x_1}-u_{0,x_2})(u_{0,x_1}-u_{0,x_2})\y\,dX\ge0. $$ 
			Now, integrating by parts, we obtain 
			\be\label{perdim4}\begin{aligned} \int_{B_1}\lvert \nabla(u_{0,x_1}-u_{0,x_2}) \rvert^2 \y\,dX&\le\int_{\partial B_1}\partial_\nu (u_{0,x_1}-u_{0,x_2})(u_{0,x_1}-u_{0,x_2})\y\,d\HH^n\\&=\lambda\int_{\partial B_1}\lvert u_{0,x_1}-u_{0,x_2}\rvert^2\y\,d\HH^n\\&\le\lambda\int_{ B_1}\lvert u_{0,x_1}-u_{0,x_2}\rvert^2\y\,dX
			\end{aligned}
			\ee
			where we used that $u_{0,x_1}-u_{0,x_2}$ is $\lambda-$homogeneous.
			
			Hence, by the homogeneity of $u_{0,x_1}-u_{0,x_2}$ and by Proposition \ref{embedding}, it follows that \be\label{perdim6}\begin{aligned}\int_{\partial B'_1} \lvert u_{0,x_1}-u_{0,x_2}\rvert^2\,d\HH^{n-1}&\le C\int_{B'_1} \lvert u_{0,x_1}-u_{0,x_2}\rvert^2\,d\HH^n\\&\le C\int_{B_1}\lvert u_{0,x_1}-u_{0,x_2}\rvert^2\y\,dX\\&\qquad+C\int_{B_1}\lvert\nabla( u_{0,x_1}-u_{0,x_2})\rvert^2\y\,dX\\&\le \omega_{\delta}(\lvert x_1-x_2\rvert)
			\end{aligned}
			\ee where in the last inequality we used \eqref{perdim3} (with the regularity of the solution) and \eqref{perdim4}.

			Using \eqref{reg2}, \eqref{reg3} and \eqref{perdim6}, we get
			\begin{equation}\label{perdim7}
				\begin{aligned}
					\int_{\partial B'_1} \left\lvert \frac{u_{0,x_1}}{\lambda_{x_1}}-\frac{u_{0,x_2}}{\lambda_{x_2}}\right\rvert\,d\mathcal{H}^{n-1}&\le\int_{\partial B'_1} \left\lvert \frac{u_{0,x_1}}{\lambda_{x_1}}-\frac{u_{0,x_2}}{\lambda_{x_1}}\right\rvert \,d\mathcal{H}^{n-1}\\&\qquad+\int_{\partial B'_1} \left\lvert \frac{u_{0,x_2}}{\lambda_{x_1}}-\frac{u_{0,x_2}}{\lambda_{x_2}}\right\rvert \,d\mathcal{H}^{n-1}\\&\le C\omega_{\delta}^\frac12(\lvert x_1-x_2\rvert)\\&\qquad+\left\lvert\frac{\lambda_{x_1}-\lambda_{x_2}}{\lambda_{x_1}}\right\rvert
					\int_{\partial B'_1}\frac{ \left\lvert u_{0,x_2}\right\rvert}{\lvert \lambda_{x_2}\rvert} \,d\mathcal{H}^{n-1}\\&\le C\omega_{\delta}^\frac12(\lvert x_1-x_2\rvert),
				\end{aligned}
			\end{equation}
			where we used the $\omega-$continuity of the function $x\mapsto \lambda_x$ for $x\in K$ to estimate from below $\lambda_{x_1}$.
			
			Now, by \eqref{perdim7} and the definition of $h_e^s$ in \eqref{he}, for $\lambda=1+s$ we have $$\int_{\partial B'_1}\lvert (x\cdot e_{x_1})_+-(x\cdot e_{x_2})_+\rvert\,d\mathcal{H}^{n-1}\le Cr^{\frac\delta2},$$ which implies \eqref{reg}.
			
			Finally, for $\lambda=2m$, since all norms in a finite dimensional space are equivalent, we get $$\lVert p_{x_1} -p_{x_2}\rVert_{L^\infty( B_1)}\le C \int_{\partial B_1}\lvert p_{x_1}-p_{x_2}\rvert\y\,d\mathcal{H}^{n}.$$ By a similar computation as in \eqref{perdim7}, we obtain $$ \int_{\partial B_1}\lvert p_{x_1}-p_{x_2}\rvert\y\,d\mathcal{H}^{n}\le C (-\log(\lvert x_1-x_2\rvert)^{-\frac{1-\beta}{2\beta}}),$$ where we used \eqref{perdim3}, then we conclude \eqref{reg1}
		\end{proof}
	\end{proposition}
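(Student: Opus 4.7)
\medskip

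\textbf{Proof proposal for Proposition \ref{prop3}.} The form of the blow-up and the strict positivity of $\lambda_x$ are immediate from Proposition \ref{cha} and the non-degeneracy Proposition \ref{nondeg}, so the content is the two quantitative continuity estimates. My plan is to fix two nearby points $x_1, x_2 \in \Gamma_\lambda(u) \cap K$, choose an intermediate scale $r = |x_1-x_2|^\sigma$ for some $\sigma \in (0,1)$ to be optimized, and compare the blow-ups through the rescalings at that scale.

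The first step is to convert the Weiss-energy decay from Proposition \ref{dec} into a pointwise decay of the $L^2(\partial B_1,a)$ norm of the homogeneous rescalings. Starting from Lemma \ref{h'd'} one checks directly that
\[
\frac{d}{dr}\Bigl(\frac{H^{x}(r)}{r^{n+a+2\lambda}}\Bigr) = \frac{2\, \W^{x}_\lambda(r,v)}{r},
\]
and integrating from $0$ to $r$ against the bound on $\W^{x}_\lambda$ from Proposition \ref{dec} yields $|H^{x}(r)/r^{n+a+2\lambda}-H^{x}_0|\le C\omega_\alpha(r)$, where $\omega_\alpha(r)=r^\alpha$ in the $(1+s)$-case and $\omega_\alpha(r)=(-\log r)^{-1/\beta}$ in the $2m$-case. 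Because Proposition \ref{nondeg} gives $H^{x}_0=c_0\lambda_x^2$, this encodes $\lambda_x^2$ as a computable quantity at any small scale.

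The next step, and the technical core, is to control $\|u^{x_1}_{r,x_1}-u^{x_2}_{r,x_2}\|_{L^1(\partial B_1,a)}$. I would split via the triangle inequality into a comparison of the bare rescalings of $u-\varphi$ (bounded by a mean-value estimate using the $C^{1,\alpha}_a$-regularity of $u$ and the fact that $\nabla(u-\varphi)$ vanishes at $x_i$ on the contact set) and a comparison of the two $L_a$-harmonic-polynomial corrections $\widetilde q^{x_i}_k$. Using $|x_1-x_2|=r^{1/\sigma}$ one obtains a bound of the form $C\,|x_1-x_2|^{1-\sigma(\lambda-s)}$ in the $(1+s)$-case (respectively $|x_1-x_2|^{1-\sigma(\lambda-s)}$ with $\sigma=1/(2m)$ in the $2m$-case), where the polynomial-correction comparison uses the $C^{k,\gamma}$-regularity of $\varphi$. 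Combining this with the Proposition \ref{homdec} decay $\|u^{x_i}_{r,x_i}-u^{x_i}_{0,x_i}\|_{L^1(\partial B_1,a)}\le C\omega_\alpha(r)$, a triangle inequality produces $\|u^{x_1}_{0,x_1}-u^{x_2}_{0,x_2}\|_{L^1(\partial B_1,a)}\le C\omega_{\delta}(|x_1-x_2|)$ for a $\delta$ obtained by balancing $\alpha\sigma$ against $1-\sigma(\lambda-s)$. The continuity of $\lambda_x$ now follows by writing $|\lambda_{x_1}^2-\lambda_{x_2}^2|$ as a telescoping sum of three pieces: two decay errors at scale $r$ at each $x_i$, plus a difference $|H(1,u^{x_1}_{r,x_1})-H(1,u^{x_2}_{r,x_2})|$ which the $L^1$ estimate (combined with the $L^\infty$ bound on the rescalings) controls.

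For the continuity of $e(x)$ and $p_x$, the blow-ups $u^{x_i}_{0,x_i}$ are both $\lambda$-homogeneous solutions of \eqref{fract4}, so the cross terms $\int L_a(u^{x_i}_{0,x_i})\,u^{x_j}_{0,x_j}\,|y|^a\,dX$ all vanish. Integration by parts then yields $\int_{B_1}|\nabla(u^{x_1}_{0,x_1}-u^{x_2}_{0,x_2})|^2|y|^a\,dX\le \lambda\int_{\partial B_1}|u^{x_1}_{0,x_1}-u^{x_2}_{0,x_2}|^2|y|^a\,d\mathcal H^n$, upgrading the $L^1$ difference to an $H^1$-and hence, by the trace embedding (Proposition \ref{embedding}) plus homogeneity, to $L^2(\partial B'_1)$. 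Normalizing by $\lambda_{x_i}$ (using the already-proved continuity of $\lambda_x$) gives $\|u^{x_1}_{0,x_1}/\lambda_{x_1}-u^{x_2}_{0,x_2}/\lambda_{x_2}\|_{L^2(\partial B'_1)}\le C\omega_\delta^{1/2}(|x_1-x_2|)$. In the case $\lambda=1+s$ one reads off $|(x\cdot e_{x_1})_+-(x\cdot e_{x_2})_+|$ from the trace of $h_e^s$ on $\partial B'_1$, which translates into \eqref{reg}. In the case $\lambda=2m$ one uses that the $2m$-homogeneous $L_a$-harmonic polynomials form a finite-dimensional space on which all norms are equivalent to pass to the $L^\infty(B_1)$ control in \eqref{reg1}.

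The main obstacle is the calibration of $\sigma$ and the bookkeeping of competing moduli of continuity: the Weiss-energy decay modulus $\omega_\alpha$, the regularity modulus of $\varphi$ controlling the polynomial corrections, and the scale-dependent mean-value estimates must be balanced so that the final exponent $\delta$ depends only on $\alpha,k,\gamma,\lambda,s$ (and, in the $2m$-case, degenerates correctly to the logarithmic rate). The $2m$-case is the delicate one, since even tiny losses compound inside the logarithm, which is why the final rate \eqref{reg3}--\eqref{reg1} comes out with exponent $(1-\beta)/(2\beta)$ rather than $1/\beta$.
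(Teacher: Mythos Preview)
Your proposal is correct and follows essentially the same route as the paper's proof: the identity $\frac{d}{dr}\bigl(H^x(r)/r^{n+a+2\lambda}\bigr)=2\W^x_\lambda(r,v)/r$, the choice of intermediate scale $r=|x_1-x_2|^\sigma$, the splitting of $u^{x_i}$ into $u-\varphi$ plus the polynomial correction handled via Lemma~\ref{stima}, the telescoping for $\lambda_x$, the integration-by-parts upgrade to $H^1$ via the sign of $\int L_a(u_{0,x_1}-u_{0,x_2})(u_{0,x_1}-u_{0,x_2})|y|^a\,dX$, and the trace embedding plus finite-dimensionality at the end are all exactly what the paper does. The only minor slip is that integrating $\W/t\le C(-\log t)^{-1/\beta}$ against $dt/t$ actually gives $(-\log r)^{-(1-\beta)/\beta}$ rather than $(-\log r)^{-1/\beta}$, but this is harmless since you correctly land on the final rate $(1-\beta)/(2\beta)$ after the subsequent square-root loss.
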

	
	\subsection{Proof of Theorem \ref{thm3}}
	For the regularity of $\Gamma_{1+s}(u)$ and the structure of $\Gamma_{2m}(u)$ for $2m\le k$ we can proceed with a standard argument, as in \cite{fs16}, \cite{gps16}, \cite{csv17}, \cite{gpps17}, \cite{gp09}. We briefly recall the proof.
	
	\textbf{Step 1.} Let's start with part (1), i.e. $\Gamma_{1+s}(u)$ is locally a $C^{1,\alpha}$ submanifold of dimension $n-1$. Without loss of generality, we can assume $0\in\Gamma_{1+s}(u)$ and prove the $C^{1,\alpha}$ regularity in a neighborhood of 0.
	
	Since $\Gamma_{1+s}(u)$ is relatively open in $\Gamma(u)$\footnote{It is sufficient to use the frequency gap in Proposition \ref{gap} and the upper semicontinuity of the function $x_0\mapsto\Phi^{x_0}(0^+,u)$, since is an infimum of continuous functions.}, there is $ \rho>0$ such that $\Gamma_{1+s}(u)\cap B'_\rho=\Gamma(u)\cap B'_\rho$, and, up to decreasing $\rho$ and up to a rotation, we can assume \be\label{per0}e(0)=e_n\mbox { and }e(x)\cdot e_n\ge\frac12\ee for all $x\in\Gamma_{1+s}(u)\cap B'_\rho$.
	
	We define the following cones centered in $x_0\in\Gamma_{1+s}(u)\cap B'_\rho$
	$$C^\pm(x_0,\eps)=\{(x,0)\in\R^{n}\times\{0\}:(x-x_0)\cdot e(x_0)\ge \eps\lvert x-x_0\rvert\}.$$
	There is $\eps_0>0$ such that for all $\eps\in(0,\eps_0)$ there is $\delta_\eps>0$ such that \be\label{per1}C^+(x_0,\eps)\cap B'_{\delta_{\eps}}\subset \Lambda^C(u)=\{u(x,0)>0\}\ee and \be\label{per2}C^-(x_0,\eps)\cap B'_{\delta_{\eps}}\subset \Lambda(u)=\{u(x,0)=0\}.\ee
	
	Now let $\varepsilon_0 >0$, $\varepsilon \in(0,\varepsilon _0)$ and $\delta_\varepsilon >0$ as above, therefore we define $$g(x'')=\max\{t\in\mathbb{R}:(x'',t,0)\in\Lambda(u))\}$$ with $x''\in\mathbb{R}^{n-1}$ and $\lvert x''\rvert\le\delta_\varepsilon \sqrt{(1-\varepsilon ^2)}=:r_\eps$. 
	
	Furthermore $$\Gamma_{1+s}(u)\cap B'_\rho=\{(x,0)\in\R^{n}\times\{0\}:x_n=g(x_1,\ldots,x_{n-1})\}\cap B'_\rho.$$	
	Thus, using \eqref{per1} and \eqref{per2} we get that $g$ is Lipschitz in $B''_{r_\eps}\subset\R^{n-1}$ with $$\nabla g(x'')=-\frac{e''(x)}{e(x)\cdot e_n},$$ and finally that $g\in C^{1,\alpha}(B''_{r_\eps})$ by \eqref{reg} and \eqref{per0}.

	\textbf{Step 2.} Now we prove part (2), i.e. the structure of $\Gamma_{2m}(u)$ for $2m\le k$, in particular we prove that $\Gamma_{2m}^j(u)$, defined in Theorem \ref{thm3}, is locally contained in a $C^{1,\log}$ submanifold of dimension $j$, with $$\Gamma_{2m}^j(u):=\{x_0\in \Gamma_{2m}:d_{2m}^{x_0}=j\},$$ where $d_{2m}^{x_0}$ is as in \eqref{d}, according to Proposition \ref{nondeg}. 
	
	Let $v_{0,x_0}=\lambda_{x_0}p_{x_0}$ be the only homogeneous blow-up, as in Proposition \ref{prop3}, with $x_0\in \Gamma_{2m}^j(u)\cap K$ where $K\subset\Gamma_{2m}^j(u)\cap \R^{n+1}$ is a compact set. The function $$q_{x_0}(x,y)=p_{x_0}(x-x_0,y)$$ satisfies the hypotheses of a modified version of the Whitney extension theorem (see the original in \cite{whitney}) for a $C^{m,\omega}$ function (see \cite{fef}), where $\omega$ in our case is a logarithmic modulus of continuity.
	
	Thus, using \eqref{reg1}, we can extend $q_{x_0}$ to a function $F\in C^{2m,\log}(\R^{n+1})$ such that $$D^\alpha F(x_0,0)=D^\alpha q_{x_0}(x_0,0)=D^\alpha p_{x_0}(0).$$
	
	Since the blow-up is non-degenerate, by Proposition \ref{nondeg}, there exist $f_i\in\mathbb{R}^n$, $i=1,\ldots,n-j$, linearly independent, such that  $$f_i\cdot \nabla_x p_{x_0}(x,y)\not=0,$$ for some $(x,y)\in\mathbb{R}^{n+1}$, by definition of $\Gamma_{2m}^j(u)$.
	
	Then $\exists \beta_i\in\mathbb{N}^n$ for $i=1,\ldots,n-j$ such that $\lvert \beta_i\rvert=2m-1$ and \be\label{per6}\partial_{f_i}D^{\beta_i}F(x_0,0)=\partial_{f_i}D^{\beta_i} p_{x_0}(0)\not=0,\ee since $\partial_{f_i} p_{x_0}$ is a $L_a-$harmonic polynomial of degree $2m-1$, it cannot vanish on $\{y=0\}$ by Lemma \ref{ex}. Then it is sufficient to choose the multiindex $\beta_i$ such that $\partial _{f_i} p_{x_0}$ contains the monomial $x^{\beta_i}$.
	
	Moreover $D^{\beta_i}F(x_0,0)=D^{\beta_i} p_{x_0}(0)=0,$ since $ p_{x_0}$ is a $2m-$homogeneous polynomial. Thus, we deduce that $$\Gamma_{2m}^j(u)\cap K\subset\bigcap_{i=1}^{n-j}\{D ^{\beta_i}F(\cdot,0)=0\},$$ since $x_0\in \Gamma_{2m}^j(u)\cap K$ is arbitrary. 
	
	We can use the implicit function theorem in any neighborhood of $x_0\in \Gamma_{2m}^j(u)\cap K$, since \eqref{per6} holds. Hence $\exists U\subset \mathbb{R}^j$ and $B_r(x_0)\subset\mathbb{R}^n$ with $r=r(x_0)>0$ such that $\bigcap_{i=1}^{n-j}\{D ^{\beta_i}F(\cdot,0)=0\}$ is locally a graph of a function $\varphi: U\subset \mathbb{R}^j\to\mathbb{R}^{n-j}$.
	
	Finally, since $D ^{\beta_i}F\in C^{1,\log}(\mathbb{R}^{n+1})$, we obtain that $\varphi$ is $C^{1,\log}(U)$. As a consequence, $\Gamma_{2m}^j(u)\cap B_r(x_0)$ is contained in a $j-$dimensional submanifold $C^{1,\log}$, which concludes the proof.

	\appendix
	\section{Appendices}

	In the following proposition, we see that a solution $u$ of \eqref{fract4} (with 0 obstacle) is a solutions of $L_a u=f$ in the whole ball $B_1$ with right-hand side $f$ which is a measure depending on $u$.
	\begin{proposition}\label{prop1} Let $u\in H^1(B_1,a)$ be a solution of $L_au=0$ in $B_1^+$ and even in the $y$ direction. 
		Then \be \label{measure} \mbox{div}(\y\nabla u)=2\lim_{y\to0^+}(\y \partial_y u)\HH^n|_{\{y=0\}}.\ee 
		\begin{proof} This is a simple consequence of an integration by parts, but for the sake of completeness we give the complete proof. 
			
			We compute $\mbox{div}(\y\nabla u)$ in distributional sense, as following \bea<-\mbox{div}(\y\nabla u),\eta> &=2\int_{B_1^+} \nabla u \cdot \nabla \eta\y\,dX\\&=2 \lim_{\varepsilon\to0^+}\int_{B_1^+\cap\{y\ge\varepsilon\}} \nabla u \cdot \nabla \eta\y\,dX\\&=2\lim_{\varepsilon\to0^+}\int_{B^+_1\cap\{y=\varepsilon\}}\eta \partial_\nu u\eps^a\,d\mathcal{H}^n\\&=-2\int_{B'_1}\eta\lim_{\eps\to0^+}(\partial_y u(x',\eps)\eps^a)\,d\mathcal{H}^n,\eea where we used the integration by parts \bea\int_{B_1}\nabla u\cdot \nabla \eta \y\,dX&=\int_{\partial B_1}\eta \partial_\nu u\y\HH^n-\int_{B_1}\mbox{div}(\y\nabla u)\eta\,dX\eea
			and $-\mbox{div}(\y\nabla u)=0$ in $B_1^+\cap\{y\ge\varepsilon\}$.
			
		\end{proof}
	\end{proposition}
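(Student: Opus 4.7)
The plan is to verify the distributional identity by testing against a compactly supported smooth function and reducing the computation to the upper half-ball via the evenness hypothesis. Concretely, for $\eta \in C_c^\infty(B_1)$, I would write
$$\langle -\mathrm{div}(|y|^a \nabla u), \eta \rangle = \int_{B_1} |y|^a \nabla u \cdot \nabla \eta \, dX = 2 \int_{B_1^+} |y|^a \nabla u \cdot \nabla \eta \, dX,$$
using that the integrand is even in $y$ (since $u$ is even, $\partial_y u$ is odd, and all other partials are even, so the product $|y|^a \nabla u \cdot \nabla \eta$ is even as long as $\eta$ is tested first; more cleanly, one can simply split the integral into $\{y > 0\}$ and $\{y < 0\}$ and change variables $y \mapsto -y$ in the latter).

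Next, I would truncate near $\{y = 0\}$: rewrite the right-hand side as $2 \lim_{\varepsilon \to 0^+} \int_{B_1^+ \cap \{y > \varepsilon\}} |y|^a \nabla u \cdot \nabla \eta \, dX$ (justified by dominated convergence, since $|y|^a |\nabla u| |\nabla \eta| \in L^1(B_1)$ by the $H^1(B_1,a)$ assumption). On the truncated domain, $u$ is a classical solution of $L_a u = 0$, which is the same as $\mathrm{div}(|y|^a \nabla u) = 0$ pointwise. A standard integration by parts then leaves only a boundary contribution on $\{y = \varepsilon\}$: the piece on $\partial B_1$ vanishes by compact support of $\eta$, and the outward unit normal on $\{y = \varepsilon\}$ is $-e_y$, giving
$$2 \int_{B_1^+ \cap \{y > \varepsilon\}} |y|^a \nabla u \cdot \nabla \eta \, dX = -2 \int_{B_1' \cap \{y = \varepsilon\}} \eta(x, \varepsilon) \, \varepsilon^a \partial_y u(x, \varepsilon) \, d\mathcal{H}^n.$$

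Finally, I would send $\varepsilon \to 0^+$ on both sides. This gives $-2 \int_{B_1'} \eta \, \bigl(\lim_{y \to 0^+} |y|^a \partial_y u\bigr) \, d\mathcal{H}^n$, which matches the claimed distributional identity after accounting for the minus sign on the left.

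The only point that needs attention is the existence of the limit $\lim_{y \to 0^+} (|y|^a \partial_y u)$ and the legitimacy of passing it inside the trace integral. For even $L_a$-harmonic $u$, this conormal trace is well-defined and continuous on the interior of $B_1'$ by the Caffarelli--Silvestre extension theory, so the passage to the limit under the integral follows from dominated convergence against the compactly supported $\eta$. Apart from that, the argument is a routine integration by parts and presents no real obstacle.
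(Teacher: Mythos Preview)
Your proposal is correct and follows essentially the same route as the paper: reduce to $B_1^+$ by evenness, truncate at $\{y=\varepsilon\}$, integrate by parts using $L_a u=0$ away from $\{y=0\}$, and pass to the limit to pick up the conormal trace on $B_1'$. Your version is in fact slightly more careful than the paper's in spelling out the dominated convergence and the existence of the conormal limit.
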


	In the following lemma we prove some useful identities.
\begin{lemma}
	Let $w\in H^1(B_1,a)$ be $\lambda-$homogeneous. Then 
	
	\begin{equation}\label{w0} \int_{B_1} w\y\,dX=\frac1{n+a+\lambda+1}\int_{\partial B_1} w\y\,d\mathcal{H}^n,	
	\end{equation}
	\begin{equation}\label{w1} \lVert \nabla w\rVert^2 \aB=\frac{1}{n+a+2\lambda-1}\lVert \nabla_\theta w\rVert^2 \ab+\frac{\lambda^2}{n+a+2\lambda-1}\lVert w\rVert^2 \ab
	\end{equation}
	
	and \begin{equation}\label{w2}
		W_\lambda(w)=\frac{1}{n+a+2\lambda-1}\left(\lVert \nabla_\theta w\rVert^2 \ab-\lambda(\lambda+n+a-1)\lVert w\rVert^2 \ab\right),
	\end{equation}
	where $\nabla_\theta w(x)$ is the gradient of $w$ on $\partial B_1$.
	
	Moreover, if $w$ is also a solution of \eqref{fract4} (with 0 obstacle), then 
	\begin{equation}\label{w3}
		\lVert \nabla_\theta w\rVert^2 \ab=\lambda(\lambda+n+a-1)\lVert w\rVert^2 \ab,
	\end{equation}
	in particular
	\begin{equation}\label{w4}
		W_\lambda(w)=0.
	\end{equation}
	\begin{proof}
		The proof is a straightforward computation.  
	\end{proof}
	
\end{lemma}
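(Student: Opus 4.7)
The plan is to prove all five identities by passing to spherical coordinates $(r,\theta)\in(0,1]\times\partial B_1$ and exploiting the $\lambda$-homogeneity $w(r\theta)=r^\lambda w(\theta)$ to separate the radial from the angular variable. The weight behaves nicely under this change, since $\y=r^a|\theta_{n+1}|^a$ on $r\theta\in\partial B_r$, and the surface measure satisfies $d\mathcal{H}^n|_{\partial B_r}=r^n\,d\mathcal{H}^n|_{\partial B_1}$ under $\theta\mapsto r\theta$. This reduces every volume integral to a product of a power of $r$ integrated from $0$ to $1$ times a surface integral on $\partial B_1$ against the weight $|\theta_{n+1}|^a$.

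For \eqref{w0} I would simply compute
\[
\int_{B_1}w\y\,dX=\int_0^1\!\!\int_{\partial B_1}w(r\theta)(r|\theta_{n+1}|)^a r^n\,d\mathcal{H}^n(\theta)\,dr=\int_0^1 r^{n+a+\lambda}\,dr\cdot\int_{\partial B_1}w\y\,d\mathcal{H}^n,
\]
which gives the stated factor $1/(n+a+\lambda+1)$. For \eqref{w1} I would use the orthogonal decomposition $\nabla w=(\partial_r w)\,\nu+\tfrac{1}{r}\nabla_\theta w$, observe that on a $\lambda$-homogeneous function $\partial_r w(r\theta)=\lambda r^{\lambda-1}w(\theta)$ and $\tfrac{1}{r}\nabla_\theta w(r\theta)=r^{\lambda-1}\nabla_\theta w(\theta)$, so that
\[
|\nabla w(r\theta)|^2=r^{2\lambda-2}\bigl(\lambda^2 w(\theta)^2+|\nabla_\theta w(\theta)|^2\bigr),
\]
and again integrate the radial power $r^{2\lambda-2+a+n}$ from $0$ to $1$, obtaining the factor $1/(n+a+2\lambda-1)$. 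Then \eqref{w2} is a direct algebraic consequence of \eqref{w1} and the definition of $W_\lambda$, after gathering $\lambda^2-\lambda(n+a+2\lambda-1)=-\lambda(\lambda+n+a-1)$.

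For the last two identities I would use that, by Proposition \ref{prop1}, a solution of \eqref{fract4} satisfies $\operatorname{div}(\y\nabla w)=2\lim_{y\to 0^+}(\y\partial_y w)\,\mathcal{H}^n|_{\{y=0\}}$ as a distribution, and the limiting trace is supported on the contact set $\{w=0\}\cap\{y=0\}$. Thus $w\,\operatorname{div}(\y\nabla w)\equiv 0$, and an integration by parts on $B_1$ (valid since $w\in H^1(B_1,a)$) yields
\[
\int_{B_1}|\nabla w|^2\y\,dX=\int_{\partial B_1}w\,\partial_\nu w\,\y\,d\mathcal{H}^n.
\]
The Euler identity $\partial_\nu w=\partial_r w=\lambda w$ on $\partial B_1$ then gives $\int_{B_1}|\nabla w|^2\y\,dX=\lambda\int_{\partial B_1}w^2\y\,d\mathcal{H}^n$, which is precisely \eqref{w4}; combining with \eqref{w2} gives \eqref{w3}.

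The only delicate point is the justification of the integration by parts in the last step, since $w$ need not be smooth across the contact set and $\operatorname{div}(\y\nabla w)$ is only a measure; here I would invoke Proposition \ref{prop1} directly to rewrite the distributional pairing $\langle-\operatorname{div}(\y\nabla w),w\rangle$ as an integral of $w\,\lim_{y\to 0^+}(\y\partial_y w)$ against $\mathcal{H}^n|_{\{y=0\}}$, which vanishes because the limit is supported in $\{w=0\}$. Everything else is a routine computation of the type already carried out several times in Section \ref{prel}.
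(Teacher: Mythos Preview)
Your proposal is correct and is precisely the kind of direct computation in spherical coordinates the paper is alluding to with ``straightforward computation''; in particular, your derivations of \eqref{w0}--\eqref{w2} via the radial/angular splitting and of \eqref{w3}--\eqref{w4} via Proposition~\ref{prop1} and Euler's identity match the intended argument. There is nothing to add.
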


The following lemma allows us to reduce the problem \eqref{fract2} for obstacle $\vf\not\equiv0$, to the problem \eqref{fract5} with 0 obstacle and right-hand side $h$.

\begin{lemma}\label{ex} Let $q_k(x)$ be an homogeneous polynomial of degree $k$. Then there is a unique $\widetilde q_k(x,y)$ homogeneous polynomial of degree $k$ such that
	\be
	\begin{cases}\label{y}
		L_a \widetilde  q_k(x,y)=0& \mbox{in } \R^{n+1}\\
		\widetilde q_k(x,y)=\widetilde  q_k(x,-y)& \mbox{in } \R^{n+1}\\
		\widetilde q_k(x,0)=q_k(x) & \mbox{in } \R^n\\
	\end{cases}
	\ee
	In particular, an homogeneous polynomial of degree $k$, that satisfies \eqref{y} and vanishes identically on $\{y=0\}$, must vanish identically on $\R^{n+1}$.
	\begin{proof}
		See Lemma 5.2 in \cite{gr17}.
	\end{proof}
\end{lemma}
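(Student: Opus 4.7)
The plan is to construct $\widetilde q_k$ explicitly by the power-series ansatz
\[
\widetilde q_k(x,y)=\sum_{j=0}^{\lfloor k/2\rfloor}y^{2j}\,p_{k-2j}(x),
\]
where each $p_{k-2j}$ is a homogeneous polynomial in $x\in\R^n$ of degree $k-2j$. Such a function is automatically a homogeneous polynomial of total degree $k$ in $(x,y)$ and is even in $y$, so the symmetry requirement is built in. Setting $p_k:=q_k$ enforces the boundary condition $\widetilde q_k(x,0)=q_k(x)$.

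Next I would compute $L_a\widetilde q_k$ termwise. Since $L_a=\Delta_x+\tfrac{a}{y}\partial_y+\partial_{yy}$, applying it to $y^{2j}p_{k-2j}(x)$ produces $y^{2j}\Delta_x p_{k-2j}(x)$ together with a term $(2j)(2j-1+a)y^{2j-2}p_{k-2j}(x)$ coming from $\partial_{yy}$ and $\tfrac{a}{y}\partial_y$. Collecting coefficients of each power $y^{2i}$ in $L_a\widetilde q_k=0$ yields the recursion
\[
p_{k-2i-2}(x)=-\frac{1}{(2i+2)(2i+1+a)}\,\Delta_x p_{k-2i}(x),\qquad i=0,1,\dots,\lfloor k/2\rfloor-1.
\]
Because $a=1-2s\in(-1,1)$, we have $(2i+2)(2i+1+a)\ge 2\cdot(1+a)>0$ for every $i\ge 0$, so the recursion is well defined. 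Starting from $p_k=q_k$, this determines $p_{k-2},p_{k-4},\dots$ uniquely, producing one (and at most one) polynomial solution $\widetilde q_k$ of \eqref{y}.

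Uniqueness and the last assertion follow at once from the same recursion. If $\widetilde q_k$ is a homogeneous polynomial of degree $k$, even in $y$, with $L_a\widetilde q_k=0$ and $\widetilde q_k(x,0)\equiv0$, then the even-in-$y$ expansion $\widetilde q_k(x,y)=\sum_{j\ge 0}y^{2j}p_{k-2j}(x)$ satisfies $p_k\equiv 0$, and the recursion $p_{k-2i-2}=-[(2i+2)(2i+1+a)]^{-1}\Delta_x p_{k-2i}$ forces $p_{k-2j}\equiv 0$ for every $j$, hence $\widetilde q_k\equiv 0$. Applied to the difference of two solutions of \eqref{y}, this gives uniqueness.

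There is essentially no hard step: the only thing to check carefully is the non-vanishing of the denominators $(2i+2)(2i+1+a)$, which is immediate from the range $a\in(-1,1)$. The structure of the argument (polynomial ansatz, recursion, termination at degree $0$ or $1$) is standard, and it also matches the construction used explicitly in Remark \ref{h2m} for the particular polynomial $h_{2m}$.
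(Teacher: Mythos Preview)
Your proof is correct and is precisely the standard construction: the paper defers to Lemma~5.2 in \cite{gr17}, whose proof is exactly this power-series ansatz with the recursion $p_{k-2i-2}=-[(2i+2)(2i+1+a)]^{-1}\Delta_x p_{k-2i}$, and the paper itself carries out the identical computation in the special case of Remark~\ref{h2m}. The only point worth adding is that the recursion terminates cleanly because the top coefficient of $y^{2\lfloor k/2\rfloor}$ in $L_a\widetilde q_k$ is $\Delta_x p_0$ (for $k$ even) or $\Delta_x p_1$ (for $k$ odd), both of which vanish automatically.
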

	
	The following theorem is a generalization of Poincaré Theorem in weighted Sobolev space $H^1(B_R,a).$
	\begin{theorem} Let $w\in H^1(B_R,a)$. Then \be\label{poinc}\int_{B_R}w^2\y\,dX\le C\left(\int_{B_R}\lvert \nabla w\rvert^2\y\,dX+\int_{\partial B_R}w^2\y\,d\HH^n\right),\ee with $C=C(n,a)>0.$
		\begin{proof}
			See Lemma 2.10 in \cite{css08}.
		\end{proof}
	\end{theorem}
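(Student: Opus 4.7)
The plan is to prove this by a weighted Rellich--Ne\v{c}as-type identity, combined with Cauchy--Schwarz, avoiding any compactness or trace machinery. The starting point is the pointwise identity
$$\operatorname{div}(\y X)=(n+1+a)\y\qquad\text{in }\R^{n+1}\setminus\{y=0\},$$
which follows from $\partial_{x_i}(\y x_i)=\y$ for $i=1,\dots,n$ and $\partial_y(\y y)=(1+a)\y$. Multiplying by $w^2$ for a smooth function $w$ (to be extended by density at the end) and using $\operatorname{div}(\y X\,w^2)=(n+1+a)\y w^2+2\y w(X\cdot\nabla w)$, integration by parts over $B_R$ yields
$$R\int_{\partial B_R}w^2\y\,d\HH^n=(n+1+a)\int_{B_R}w^2\y\,dX+2\int_{B_R}\y\,w(X\cdot\nabla w)\,dX.$$

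The second step is to absorb the mixed term via Young's inequality: for $\eta>0$,
$$\bigl|2\y w(X\cdot\nabla w)\bigr|\le \eta\,w^2\y+\frac{|X|^2}{\eta}|\nabla w|^2\y\le \eta\,w^2\y+\frac{R^2}{\eta}|\nabla w|^2\y.$$
Choosing $\eta=\tfrac{n+1+a}{2}$ (which is strictly positive since $a\in(-1,1)$) and moving the resulting $\tfrac{n+1+a}{2}\int_{B_R}w^2\y\,dX$ to the left-hand side gives
$$\frac{n+1+a}{2}\int_{B_R}w^2\y\,dX\le R\int_{\partial B_R}w^2\y\,d\HH^n+\frac{2R^2}{n+1+a}\int_{B_R}|\nabla w|^2\y\,dX,$$
which is exactly \eqref{poinc} with $C=C(n,a,R)=\max\!\left\{\tfrac{2R}{n+1+a},\tfrac{4R^2}{(n+1+a)^2}\right\}$.

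Finally, to pass from smooth $w$ to $w\in H^1(B_R,a)$, I would invoke the density of $C^\infty(\overline{B_R})$ in $H^1(B_R,a)$, which is standard because $\y$ is a Muckenhoupt $A_2$ weight for $a\in(-1,1)$; the trace on $\partial B_R$ is well-defined in $L^2(\partial B_R,a)$ by the same weight theory. No real obstacle appears here: the only delicate point is handling the set $\{y=0\}$ in the integration by parts, but since $\y|X|\in L^1(B_R)$ and $\y w^2\in L^1(B_R)$, the boundary term at $\{y=0\}$ vanishes by approximating $B_R$ with $B_R\cap\{|y|>\varepsilon\}$ and sending $\varepsilon\to 0^+$ (the contribution from the two faces $\{y=\pm\varepsilon\}$ is $O(\varepsilon^{1+a})\to 0$ because the outer normal is $\mp e_y$ so the flux carries a factor $\pm\varepsilon$ against $\varepsilon^a$). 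This completes the argument.
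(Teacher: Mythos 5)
Your argument is correct, and it is genuinely different from what the paper does: the paper offers no proof at all, simply citing Lemma 2.10 of \cite{css08}, whereas you give a self-contained divergence-identity (Rellich--Ne\v{c}as) proof. The key identity $\mbox{div}(\y X)=(n+1+a)\y$ is right (the $y$-derivative $\partial_y(\lvert y\rvert^a y)=(1+a)\y$ holds on both sides of $\{y=0\})$, the resulting Pohozaev-type identity
\begin{equation*}
R\int_{\partial B_R}w^2\y\,d\HH^n=(n+1+a)\int_{B_R}w^2\y\,dX+2\int_{B_R}\y\,w(X\cdot\nabla w)\,dX
\end{equation*}
is correct, and the absorption with $\eta=\tfrac{n+1+a}{2}>0$ (valid precisely because $a>-1$) closes the estimate. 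The treatment of $\{y=0\}$ is also sound: the flux through $\{y=\pm\varepsilon\}$ is $O(\varepsilon^{1+a})\to0$ since $1+a>0$, and the density of smooth functions and the existence of the $L^2(\partial B_R,a)$ trace are standard for the $A_2$ weight $\y$. Two remarks. First, your constant $C=C(n,a,R)$ is the honest one: the statement's ``$C=C(n,a)$'' cannot be literally $R$-independent (test with $w\equiv1$; by scaling the sharp form is $\int_{B_R}w^2\y\le C(n,a)\bigl(R^2\int_{B_R}\lvert\nabla w\rvert^2\y+R\int_{\partial B_R}w^2\y\bigr)$, which is exactly what you obtain), and in the paper the inequality is only ever used for fixed $R$, so this is an imprecision in the statement rather than a flaw in your proof. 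Second, what each approach buys: the citation is shorter and defers to a standard reference, while your proof is elementary, fully explicit in the constants, and makes transparent where the restriction $a\in(-1,1)$ enters (through $n+1+a>0$ and the vanishing of the boundary term at $\{y=0\}$).
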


	We recall the following generalization of the Liouville's Theorem for entire $L_a-$harmonic functions.
	\begin{theorem}[Liouville Theorem]\label{liouville} Let $w$ be a global solution of $L_aw(x,y)=0$ for $(x,y)\in \R^n\times\R$ such that $w$ is even in the $y$ direction and $$\lvert w(x,y)\rvert\le C \lvert (x,y)\rvert ^\alpha.$$ 
		Then, $w$ is a polynomial.
		\begin{proof}
			See Lemma 2.7. in \cite{css08}.
		\end{proof}
	\end{theorem}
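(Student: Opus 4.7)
The plan is to run a standard Liouville-type argument adapted to the degenerate operator $L_a$: establish scale-invariant interior derivative estimates for $w$, let the radius go to infinity, and use the polynomial growth bound to force all sufficiently high-order derivatives of $w$ to vanish identically.

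First I would observe that, by the evenness of $w$ in $y$ together with the distributional identity of Proposition \ref{prop1}, the equation $\mbox{div}(\y\nabla w)=0$ holds across $\{y=0\}$, so $w$ is a global weak solution on $\R^{n+1}$ of the degenerate elliptic equation with Muckenhoupt weight $\y$, $a\in(-1,1)$. Tangential derivatives $\partial_{x_i}w$ are themselves even $L_a$-harmonic functions, and even $y$-derivatives $\partial_y^{2k}w$ can be reconstructed by differentiating the equation $\Delta_x w+\frac{a}{y}\partial_y w+\partial_y^2 w=0$ together with the fact that $\y\partial_y w$ extends smoothly across $\{y=0\}$ by evenness.

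The core step is the scale-invariant interior estimate
\begin{equation*}
|\partial_x^\beta\partial_y^{2k}w(X_0)|\le \frac{C_{\beta,k}}{R^{|\beta|+2k}}\sup_{B_R(X_0)}|w|,
\end{equation*}
valid for every $X_0\in\R^{n+1}$, $R>0$, $\beta\in\N^n$, $k\in\N$. To prove it I would use the substitution $t=y^2$: since $w$ is even in $y$, one can write $w(x,y)=W(x,t)$, and $L_aw=0$ becomes $\Delta_x W+4t\,\partial_t^2 W+2(1+a)\partial_t W=0$ on $\R^n\times[0,\infty)$, an equation with smooth coefficients and a regular degeneracy at $t=0$ to which classical interior Schauder theory applies; derivative bounds in the $(x,t)$ variables transfer back to the desired bounds on $\partial_x^\beta\partial_y^{2k}w$. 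Inserting the growth hypothesis $|w|\le C|X|^\alpha$ and letting $R\to\infty$ with $X_0$ fixed gives $\partial_x^\beta\partial_y^{2k}w\equiv 0$ whenever $|\beta|+2k>\alpha$.

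Since $w$ is even in $y$, the Taylor expansion of $w$ at any point $(x_0,0)$ contains only even powers of $y$, so the vanishing of all sufficiently high-order mixed derivatives forces $w$ to agree everywhere with a polynomial of degree at most $\lfloor\alpha\rfloor$. The main technical obstacle is the singular weight $\y$, which prevents a naive application of Schauder estimates near $\{y=0\}$; the way around is precisely the even symmetry in $y$, which reduces the problem to a regular singular equation in $t=y^2$ on which classical tools produce the required derivative bounds.
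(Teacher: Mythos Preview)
The paper gives no proof here; it only cites Lemma~2.7 of \cite{css08}, whose argument rests on the fact that $|y|^a$ is an $A_2$ Muckenhoupt weight, so that the Fabes--Kenig--Serapioni regularity theory for degenerate divergence-form equations is available.

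Your overall strategy is the right one, and the treatment of tangential directions is sound: each $\partial_{x_i}w$ is again even and $L_a$-harmonic, so any scale-invariant local $L^\infty$ bound can be iterated in the $x$-variables. The gap is in how you obtain that bound near $\{y=0\}$. After the substitution $t=y^2$ the equation
\[
\Delta_x W+4t\,\partial_t^2W+2(1+a)\,\partial_tW=0
\]
is still degenerate at $t=0$ (the leading coefficient in $t$ vanishes there), so ``classical interior Schauder theory'' does \emph{not} furnish estimates up to $\{t=0\}$; you have relocated the singularity rather than removed it. Likewise, the assertion that $|y|^a\partial_yw$ extends \emph{smoothly} across $\{y=0\}$ ``by evenness'' already presupposes the regularity you are trying to prove---evenness alone gives only that the weighted normal derivative vanishes in a weak sense.

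What actually supplies the missing estimate, and is essentially the route taken in \cite{css08}, is the $A_2$ structure: for $a\in(-1,1)$ the weight $|y|^a$ belongs to the Muckenhoupt class $A_2$, and the De~Giorgi--Nash--Moser theory of Fabes--Kenig--Serapioni yields the scale-invariant local boundedness
\[
\sup_{B_{R/2}}|v|\le C\Big(R^{-(n+1+a)}\int_{B_R}v^2\,|y|^a\,dX\Big)^{1/2}
\]
for every even $L_a$-harmonic $v$, uniformly across $\{y=0\}$. Feeding this, together with the weighted Caccioppoli inequality, into your iteration on $\partial_x^\beta w$ and letting $R\to\infty$ kills all tangential derivatives of order larger than $\alpha$; the even $y$-derivatives on $\{y=0\}$ are then recovered algebraically from the equation, and the polynomial conclusion follows as you outlined.
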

	
	In the following Proposition, we show an embedding from $H^1(B_1,a)$, the weighted Sobolev space in $B_1$, to $L^2(B'_1)$, the Lebesgue space on $B'_1$.
	\begin{proposition}\label{embedding} If $a\in(-1,1)$, then there is a bounded operator $T:H^1(B_1,a)\to L^2(B'_1) $, i.e. $$\lVert w \rVert _{L^2(B'_1)}\le C\lVert w \rVert _{H^1(B_1,a)}, $$ for all $w\in H^1(B_1,a)$.
		
		Moreover \bea\lVert \phi \rVert _{L^2(\partial B'_1)}\le C\lVert \phi \rVert _{H^1(\partial B_1,a)}\eea for all $\phi\in H^1(\partial B_1,a)$.
		\begin{proof}
			See Theorem 2.8. in \cite{nek}.
		\end{proof}
	\end{proposition}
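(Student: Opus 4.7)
The plan is to prove the trace estimate directly from the fundamental theorem of calculus, exploiting the two-sided bound $-1 < a < 1$, which makes $\lvert y\rvert^a$ a Muckenhoupt $A_2$ weight on $\R^{n+1}$. In particular, $C^\infty(\overline{B_1})$ is dense in $H^1(B_1,a)$, so it suffices to prove the estimate for smooth $w$ and conclude by approximation. I would split the first claim into a one-dimensional trace estimate along each vertical segment $\{x'\} \times (0,\rho)$, followed by an integration in $x' \in B'_1$.

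For fixed $x' \in B'_1$, smooth $w$, and $y \in (0, \rho)$ with $\{x'\} \times [0,\rho] \subset B_1$, the fundamental theorem of calculus gives $w(x',0) = w(x',y) - \int_0^y \partial_t w(x',t)\, dt$. Squaring and splitting $\lvert\partial_t w\rvert = (\lvert\partial_t w\rvert t^{a/2})(t^{-a/2})$, Cauchy-Schwarz yields
\begin{equation*}
w(x',0)^2 \le 2\, w(x',y)^2 + \frac{2\, y^{1-a}}{1-a} \int_0^y (\partial_t w(x',t))^2 t^a \, dt,
\end{equation*}
where the condition $a<1$ is used to guarantee $\int_0^y t^{-a}\, dt < \infty$. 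Multiplying by $y^a$ and integrating in $y$ over $(0,\rho)$, using that $\int_0^\rho y^a\, dy = \rho^{1+a}/(1+a) < \infty$ since $a>-1$, and applying Fubini on the double integral, we obtain
\begin{equation*}
w(x',0)^2 \le C(a,\rho) \int_0^\rho \left( w(x',t)^2 + (\partial_t w(x',t))^2 \right) t^a \, dt.
\end{equation*}
Integrating in $x'$ over $B'_1$ (adjusting $\rho$ with a finite covering if necessary to ensure the vertical segments stay inside $B_1$) and using the even symmetry of the space yields $\lVert w\rVert_{L^2(B'_1)}^2 \le C \lVert w\rVert_{H^1(B_1,a)}^2$. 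The second estimate follows by the same one-dimensional argument applied in local coordinates near $\partial B'_1 \subset \partial B_1$: parametrize $\partial B_1$ in a tubular neighborhood of $\partial B'_1$ so that the equator becomes flat, apply the trace inequality in the transversal direction, and patch with a finite partition of unity.

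The main obstacle, and the reason the restriction $a \in (-1,1)$ is sharp, is the tight interplay of the two endpoint conditions: $a > -1$ is needed so $y^a$ is integrable near $y=0$, guaranteeing the left-hand side becomes a finite multiple of $w(x',0)^2$ after the integration in $y$; while $a < 1$ is needed so $y^{-a}$ is integrable near $y=0$, allowing Cauchy-Schwarz to produce a finite weighted gradient integral on the right. Any valid proof must use both bounds simultaneously, as above through two distinct steps. The remaining technicalities (density of smooth functions in the $A_2$-weighted space, uniformity of constants under a finite covering, passage from $B'_{1/2}$ to $B'_1$, and the local parametrization of $\partial B_1$ across the flat equator) are standard consequences of $A_2$-weight theory and of the smoothness and compactness of $\partial B_1$.
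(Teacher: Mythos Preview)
Your argument is correct. The one-dimensional trace estimate via the fundamental theorem of calculus and Cauchy--Schwarz with the splitting $|\partial_t w|=|\partial_t w|\,t^{a/2}\cdot t^{-a/2}$ is exactly the right computation, and you have identified precisely where each of the endpoint conditions $a>-1$ and $a<1$ enters. The passage from the pointwise estimate to the integrated one, and the remark about density of smooth functions in the $A_2$-weighted space, are both sound; the geometric issues near $\partial B'_1$ and the spherical version are handled by the routine covering and local-chart arguments you outline.

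The paper takes a different route: it does not prove the estimate at all but simply invokes Theorem~2.8 of \cite{nek}, which establishes trace theorems for weighted Sobolev spaces $W^{1,p}(\Omega,d^\varepsilon)$ in much greater generality. Your approach is more elementary and entirely self-contained, at the cost of being tailored to this specific weight and geometry; the paper's citation is shorter but opaque, and the cited result is considerably more than what is needed here. For the purposes of this paper, where only the $L^2$ trace on the hyperplane (and on the equator of the sphere) is used, your direct argument is arguably preferable, since it makes the role of the constraint $a\in(-1,1)$ transparent.
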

	\begin{lemma}\label{stima}
		Let $\vf\in C^{k,\gamma}(\R^n)$ and $x_1,x_2\in\R^n$. Let $Q ^{x_i}(x,y)=\widetilde q_k^{x_i}(x,y)-q_k^{x_i}(x)$, where $q_k^{x_i}$ is the $k$-th Taylor polynomial of $\vf$ at $x_i$ and $\widetilde q_k^{x_i}$ is the extension according to Lemma \ref{ex}.
		If $r=\lvert x_1-x_2\rvert^\sigma$ for some $\sigma\in(0,1)$, then 
		\be
		\int_{\partial B_1} \lvert Q^{x_1}_{r,x_1}-Q^{x_2}_{r,x_1}\rvert\y \,d\mathcal{H}^n\le Cr^{k+\gamma-\lambda},
		\ee  where $Q^{x_i}_{r,x_1}$ are the rescalings of $Q^{x_i}$ as in \eqref{hom}.
		\begin{proof}
				We denote by $q$ the function $q_k$ and we denote by $\widetilde q$ the function $\widetilde q_k$. Moreover we consider $q^{x_i}_{r,x_i}$ the rescalings of $q^{x_i}$ as in \eqref{hom}.
				
				Notice that, by $C^{k,\gamma}-$regularity of $\vf,$ we get $$\lVert \vf-q^{x_1}\rVert_{L^\infty(B_r(x_1))}\le C{r}^{k+\gamma}$$ and $$\lVert \vf-q^{x_2}\rVert_{L^\infty(B_r(x_1))}\le \lVert \vf-q^{x_2}\rVert_{L^\infty(B_{2r}(x_2))}\le C{r}^{k+\gamma},$$ then
				\be\label{k}\lVert q^{x_1}-q^{x_2}\rVert_{L^\infty(B_r(x_1))}\le Cr^{k+\gamma}.\ee
				We deduce that
				\bea
				\int_{\partial B_1} \lvert q^{x_1}_{r,x_1}-q^{x_2}_{r,x_1}\rvert&\y \,d\mathcal{H}^n\\&=\frac{1}{r^\lambda}\int_{\partial B_1} \lvert q^{x_1}(x_1+rx,ry)-q^{x_2}(x_1+rx,ry)\rvert \y\,d\mathcal{H}^n\\&=\frac{1}{r^{\lambda+n+a}}\int_{\partial B_r} \lvert q^{x_1}(z)-q^{x_2}(z)\rvert \lvert z_{n+1}\rvert^a\,d\mathcal{H}^n
				\\&\le \frac{C}{r^\lambda}\lVert q^{x_1}-q^{x_2}\rVert_{L^\infty(\partial B_r(x_1))}\le Cr^{k+\gamma-\lambda},
				\eea where we used \eqref{k} in the last inequality.
			
				Notice that the space of $L_a-$harmonic polynomials in $\R^{n+1}$ of degree $k$ which are even in the $y$ direction is a finite dimensional space. Then $$\lVert \widetilde p\rVert_{L^1(B_1,a)}\le C\lVert p\rVert_{L^1(B'_1,a)}$$ for each $p$ which is a polynomial of degree $k$, $L_a-$harmonic and even in the $y$ direction. Notice that the right-hand side is a norm by Lemma \ref{ex}.
			
				Consider the operation of extension as in Lemma \ref{ex} and the operation of rescaling as in \eqref{hom}. These operations commute, by the explicit formulation of the extension, defined in Lemma 5.2 in \cite{gr17}.
				
				Therefore, we have
				\bea \int_{\partial B_1} \lvert \widetilde q^{x_1}_{r,x_1}-\widetilde q^{x_2}_{r,x_1}\rvert\y \,d\mathcal{H}^n&\le \int_{B_1} \lvert \widetilde q^{x_1}_{r,x_1}-\widetilde q^{x_2}_{r,x_1}\rvert\y \,dX\\&\le C\int_{B'_1} \lvert q^{x_1}_{r,x_1}- q^{x_2}_{r,x_1}\rvert\y \,d\HH^{n}\\&\le C\int_{B_1} \lvert q^{x_1}_{r,x_1}- q^{x_2}_{r,x_1}\rvert\y \,dX\\&\le\frac{C}{r^\lambda}\lVert q^{x_1}-q^{x_2}\rVert_{L^\infty(B_r(x_1))}\le C r^{k+\gamma-\lambda},
				\eea by \eqref{k}, which concludes the proof.
		\end{proof}
	\end{lemma}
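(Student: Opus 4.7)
The statement to prove is Lemma \ref{stima}, which bounds the difference between two rescaled Taylor-extension error terms $Q^{x_i}=\widetilde q_k^{x_i}-q_k^{x_i}$ centered at nearby points. My plan is to split $Q^{x_1}-Q^{x_2}$ into the polynomial piece $q_k^{x_1}-q_k^{x_2}$ (a function of $x$ only) and the $L_a$-harmonic extension piece $\widetilde q_k^{x_1}-\widetilde q_k^{x_2}$, and handle each separately. The polynomial part is controlled directly by the $C^{k,\gamma}$-regularity of $\vf$, while the extension part is handled by transferring the bound from $\{y=0\}$ to $\R^{n+1}$ using finite-dimensionality.

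First I would treat the polynomial part. Since $r=|x_1-x_2|^\sigma$ with $\sigma\in(0,1)$ and $r$ small, we have $|x_1-x_2|=r^{1/\sigma}\le r$, so $B_r(x_1)\subset B_{2r}(x_2)$. Using $|\vf(z)-q_k^{x_i}(z)|\le C|z-x_i|^{k+\gamma}$ on both centers and the triangle inequality, I obtain
\be\label{poly-bd-prop}
\|q_k^{x_1}-q_k^{x_2}\|_{L^\infty(B_r(x_1))}\le Cr^{k+\gamma}.
\ee
Changing variables $z=x_1+rx$ in the rescaling definition yields
\bea
\int_{\partial B_1}|q^{x_1}_{r,x_1}-q^{x_2}_{r,x_1}|\y\,d\HH^n=\frac{1}{r^{\lambda+n+a}}\int_{\partial B_r(x_1)}|q^{x_1}-q^{x_2}||z_{n+1}|^a\,d\HH^n\le Cr^{k+\gamma-\lambda},
\eea
where the weight contributes an $r^{n+a}$-factor that cancels.

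For the extension piece, the key observation is that the operations of rescaling (centered at $x_1$, with homogeneity $\lambda$) and of $L_a$-harmonic extension (Lemma \ref{ex}) commute on polynomials of degree $k$: both $\widetilde{q^{x_i}_{r,x_1}}$ and $(\widetilde q^{x_i})_{r,x_1}$ are degree-$k$ polynomials, even in $y$, $L_a$-harmonic on $\R^{n+1}$ (since $L_a$ is invariant under $x$-translation and scales homogeneously in $(x,y)$), and agree on $\{y=0\}$, so uniqueness forces them to coincide. Next, the space of $L_a$-harmonic polynomials of degree $\le k$ even in $y$ is finite-dimensional, so on it the norms $\|\cdot\|_{L^1(\partial B_1,a)}$, $\|\cdot\|_{L^1(B_1,a)}$ and $\|\cdot|_{\{y=0\}}\|_{L^\infty(B'_1)}$ are all equivalent (the last is a norm by Lemma \ref{ex}). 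Applying this to $\widetilde q^{x_1}_{r,x_1}-\widetilde q^{x_2}_{r,x_1}$ gives
\bea
\int_{\partial B_1}|\widetilde q^{x_1}_{r,x_1}-\widetilde q^{x_2}_{r,x_1}|\y\,d\HH^n\le C\|q^{x_1}_{r,x_1}-q^{x_2}_{r,x_1}\|_{L^\infty(B'_1)}\le Cr^{k+\gamma-\lambda},
\eea
where the last bound reuses \eqref{poly-bd-prop} after rescaling. Adding the two estimates closes the proof via the triangle inequality.

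The routine input is the $C^{k,\gamma}$ Taylor estimate; the only non-trivial step is the commutativity of rescaling with the $L_a$-harmonic extension together with the finite-dimensional norm equivalence, which together let me trade a bound on $\{y=0\}$ for a bound on $\partial B_1$ with weight $\y$. I expect this to be the subtlest point, but it is immediate once one notes that the extension operator is a linear isomorphism onto a finite-dimensional target space and that translation plus homogeneous scaling preserve $L_a$-harmonicity.
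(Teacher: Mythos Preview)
Your proposal is correct and follows essentially the same approach as the paper: split $Q^{x_1}-Q^{x_2}$ into the trace part $q_k^{x_1}-q_k^{x_2}$ and the extension part $\widetilde q_k^{x_1}-\widetilde q_k^{x_2}$, control the former via the $C^{k,\gamma}$ Taylor estimate and the inclusion $B_r(x_1)\subset B_{2r}(x_2)$, and control the latter by the commutativity of rescaling with the $L_a$-harmonic extension together with a finite-dimensional norm equivalence. The only cosmetic difference is that the paper uses the $L^1(B'_1,a)$ norm in the norm-equivalence step while you use $\|\cdot|_{\{y=0\}}\|_{L^\infty(B'_1)}$; both are legitimate norms on the finite-dimensional space of extensions by Lemma~\ref{ex}.
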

	
	\bibliographystyle{alpha}
	\bibliography{paper1.bib}
\end{document}